\newtheorem{defn}{Definition}[section]
\newtheorem{lemma}[defn]{Lemma}
\newtheorem{prop}[defn]{Proposition}
\newtheorem{theo}[defn]{Theorem}
\newtheorem{coro}[defn]{Corollary}
\newtheorem{rk}[defn]{Remark}
\def\Ric{\mathop{\rm Ric}\nolimits}
\def\Rm{\mathop{\rm Rm}\nolimits}
\def\tr{\mathop{\rm tr}\nolimits}
\def\vol{\mathop{\rm vol}\nolimits}
\def\eucl{\mathop{\rm eucl}\nolimits}
\def\vol{\mathop{\rm Vol}\nolimits}
\def\div{\mathop{\rm div}\nolimits}
\def\AVR{\mathop{\rm AVR}\nolimits}
\def\Li{\mathop{\rm \mathscr{L}}\nolimits}
\def\Ric{\mathop{\rm Ric}\nolimits}
\def\Rm{\mathop{\rm Rm}\nolimits}
\def\tr{\mathop{\rm tr}\nolimits}
\def\vol{\mathop{\rm vol}\nolimits}
\def\eucl{\mathop{\rm eucl}\nolimits}
\def\vol{\mathop{\rm Vol}\nolimits}
\def\div{\mathop{\rm div}\nolimits}
\def\AVR{\mathop{\rm AVR}\nolimits}
\def\Li{\mathop{\rm \mathscr{L}}\nolimits}
\def\En{\mathop{\rm \mathscr{E}}\nolimits}
\def\supp{\mathop{\rm supp}\nolimits}
\def\R{\mathop{\rm R}\nolimits}
\def\Hom{\mathop{\rm Hom}\nolimits}
\newsavebox\CBox
\newcommand\hcancel[2][0.5pt]{%
  \ifmmode\sbox\CBox{$#2$}\else\sbox\CBox{#2}\fi%
  \makebox[0pt][l]{\usebox\CBox}%
  \rule[0.5\ht\CBox-#1/2]{\wd\CBox}{#1}}
\newcommand{\nor}{\hcancel{\|}}
\title{Weak stability of Ricci expanders with positive curvature operator}
\author[Alix Deruelle and  Tobias Lamm]{Alix Deruelle and Tobias Lamm}
\address[Alix Deruelle]{D\'epartement de math\'ematiques, B\^atiment 425, Facult\'e des sciences, Universit\'e Paris-Sud 11, F-91405, Orsay }
\email{alix.deruelle@math.u-psud.fr}
\address[Tobias Lamm]{Institute for Analysis, Karlsruhe Institute of Technology (KIT), Englerstr. 2, 76131
Karlsruhe, Germany}
\email{tobias.lamm@kit.edu}
\begin{document}
\begin{abstract}
We prove the $L^{\infty}$ stability of expanding gradient Ricci solitons with positive curvature operator and quadratic curvature decay at infinity. 
\end{abstract}

\maketitle

\section{Introduction}

We investigate the stability of expanding gradient Ricci solitons with positive curvature operator along the Ricci flow in the spirit of Koch and Lamm \cite{Koc-Lam-Rou}. The main aim of this approach is twofold : on one hand, it allows to start the Ricci flow from singular initial metrics that are close to a self-similar solution of the Ricci flow in the $L^{\infty}$ sense, on the other hand, it proves the existence of global solutions of the Ricci flow, i.e. solutions that are defined for all positive time. 

We recall that an expanding gradient Ricci soliton is a self-similar solution of the Ricci flow, i.e. it evolves by dilating homotheties and diffeomorphisms generated by the gradient of a smooth function. Formally speaking, it consists of a triplet $(M^n,g,\nabla^g f)$ where $(M^n,g)$ is a complete Riemannian manifold endowed with a smooth function $f:M^n\rightarrow\mathbb{R}$ called the potential function such that the associated Bakry-\'Emery tensor is constantly negative as follows : $$\Ric(g)+\nabla^{g,2}(-f)=-\frac{g}{2}.$$ Alternatively, the corresponding Ricci flow is given, at least formally, by $g(\tau)=(1+\tau)\phi^*_{\tau}g$, for $\tau\in(-1,+\infty)$ where $(\phi_{\tau})_{\tau>-1}$ is a one parameter family of diffeomorphisms generated by the vector field $-\nabla^gf/(1+\tau)$. Expanding gradient (Ricci) solitons with non negative curvature operator are non collapsed, that is, the volume of geodesic balls is Euclidean for any scale and are diffeomorphic to the Euclidean space $\mathbb{R}^n$, where $n$ is the dimension of the underlying manifold. Thanks to the work of Schulze and Simon \cite{Sch-Sim}, they appear as natural blow-down of Ricci flows starting from Riemannian manifolds with bounded non negative curvature operator and positive asymptotic volume ratio. Here are some examples of such geometric structures with non negative curvature operator : 
\begin{enumerate}
\item The Gaussian soliton $\left(\mathbb{R}^n,\eucl,\nabla^{\eucl}\left(\frac{\arrowvert\cdot\arrowvert^2}{4}\right)\right)$, where $\phi_{\tau}(x):=x/(\sqrt{1+\tau})$, $\tau>-1$.\\
\item The Gutperle-Headrick-Minwalla-Schomerus examples \cite{Gut-Hea-Min-Sch} in dimension $2$ : it consists of a one parameter family $(\mathbb{R}^2,g_c,\nabla^{g_c}f_c)_{c\in(0,1)}$ asymptotic to $(C(\mathbb{S}^1),dr^2+(cr)^2d\theta^2,r\partial_r/2)_{c\in(0,1)},$ with rotational symmetry.\\
\item The Bryant examples [Chap. $1$,\cite{Cho-Lu-Ni-I}] correspond to the previous examples in higher dimensions : again, it consists of a one parameter family $(\mathbb{R}^n,g_c,\nabla^{g_c}f_c)_{c\in(0,1)}$ asymptotic to $(C(\mathbb{S}^{n-1}),dr^2+(cr)^2g_{\mathbb{S}^{n-1}},r\partial_r/2)_{c\in(0,1)}$ with rotational symmetry.\\
\item In \cite{Der-Asy-Com-Egs} and \cite{Der-Smo-Pos-Met-Con}, the first author proved the existence and uniqueness of deformations of gradient expanders with positive curvature operator smoothly coming out of a metric cone over a smooth Riemannian sphere $(X,g_X)$ satisfying $\Rm(g_X)\geq  1$.
\end{enumerate}

Finally, we mention that there is a one-to-one correspondence between the following geometric notions : for such expanders, the asymptotic cone (in the Gromov-Hausdorff sense) is the singular initial condition of the corresponding Ricci flow : see the introduction of \cite{Der-Smo-Pos-Met-Con} for an explanation.

Now, the Ricci flow is well-known to be a degenerate quasilinear parabolic equation : this comes from its invariance under the whole diffeomorphism group of the underlying manifold. That is why it is more convenient to study the so called (time dependent) DeTurck Ricci flow : let $(M^n,g_0(t))_{t\geq 0}$ be an expanding gradient Ricci soliton, we consider the following system,

\[
\left\{
\begin{array}{rl}
&\partial_tg=-2\Ric(g(t))+\Li_{V(g(t),g_0(t))}(g(t)), \quad \mbox{on $M^n\times (0,+\infty)$,}\\
&\\
& g(0):=g_0(0)+h,\quad \mbox{$\|h\|_{L^{\infty}(M^n,g_0)}$ small},
\end{array}
\right.
\]
where $V(g(t),g_0(t))$ is a vector field defined by 
\begin{eqnarray*}
V(g(t),g_0(t)):=\div_{g_0(t)}(g(t)-g_0(t))-\frac{1}{2}\nabla^{g_0(t)}\tr_{g_0(t)}(g(t)-g_0(t)).
\end{eqnarray*}

See section \ref{sec-equ-flo} for the equivalence with other flows that appear naturally in this setting. 

We are now in a position to state the main result of this paper : 
\begin{theo}\label{main-theo}
Let $(M^n,g_0(t))_{t\geq 0}$ be an expanding gradient Ricci soliton with positive curvature operator and quadratic curvature decay at infinity, i.e.
\begin{eqnarray*}
\Rm(g_0)>0,\quad \arrowvert\Rm(g_0)\arrowvert_{g_0}(x)\leq \frac{C}{1+d_{g_0}^2(p,x)},\quad \forall x\in M,
\end{eqnarray*}
for some point $p\in M$ and some positive constant $C$ (depending on $p$). Then there exists a positive $\epsilon$ such that for any metric $g\in L^{\infty}(M,g_0)$ satisfying $\|g-g_0\|_{L^{\infty}(M,g_0)}\leq\epsilon$, there exists a global solution $(M^n,g(t))_{t\geq 0}$ to the DeTurck Ricci flow  with initial condition $g$. Moreover, 
\begin{enumerate}
\item the solution is analytic in space and time and the following holds for any nonnegative indices $\alpha,\beta$ : 
\begin{eqnarray*}
\sup_{x\in M,t>0}\arrowvert(t^{1/2}\nabla^{g_0(t)})^{\alpha}(t\partial_t)^{\beta}(g(t)-g_0(t))\arrowvert_{g_0(t)}\leq c_{\alpha,\beta}\arrowvert g(0)-g_0\arrowvert_{L^{\infty}(M,g_0)},
\end{eqnarray*}
\item this solution is unique in $B_X(g_0,\epsilon)$ where $X$ is a Banach space defined in section \ref{sec-fct-spa}.
\end{enumerate}
\end{theo}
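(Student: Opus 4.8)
The plan is to follow the fixed-point strategy of Koch--Lamm, recasting the DeTurck flow as a quasilinear parabolic equation for the perturbation $h(t):=g(t)-g_0(t)$ and solving it by a contraction mapping argument in the Banach space $X$. First I would write the evolution equation for $h$ in the form $\partial_t h = \mathcal L_{g_0(t)} h + Q(h,\nabla h)$, where $\mathcal L_{g_0(t)}$ is the (time-dependent) Lichnerowicz-type Laplacian coming from the linearization of the DeTurck operator around the soliton $g_0(t)$, and $Q$ collects all terms that are at least quadratic in $h$ and its first derivatives (schematically $Q \sim g_0^{-1}*h*\nabla^2 h + g_0^{-1}*g_0^{-1}*\nabla h * \nabla h + \text{lower order}$, with coefficients depending analytically on $h$ as long as $\|h\|_{L^\infty}$ is small enough to keep $g(t)$ uniformly equivalent to $g_0(t)$). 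The soliton structure is what makes the linear operator favorable: because $\Rm(g_0)>0$ and in particular $\Ric(g_0)>0$, the zeroth-order term in $\mathcal L_{g_0(t)}$ has a good sign on the trace part, and the self-similarity gives a clean scaling $g_0(t)=(1+t)\phi_t^*g_0$ that is compatible with the parabolic scaling built into $X$.

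Next I would establish the linear estimates: for the inhomogeneous problem $\partial_t u = \mathcal L_{g_0(t)} u + F$, $u(0)=u_0$, one needs $\|u\|_X \le C(\|u_0\|_{L^\infty} + \|F\|_{Y})$ for an appropriate companion space $Y$ (the space in which $Q(h,\nabla h)$ naturally lives, with one fewer derivative and the matching weights $t^{1/2}$ for each spatial derivative and $t$ for each time derivative). This is the technical heart and I expect it to be the main obstacle: the manifold is noncompact, the background metric is time-dependent, and one must prove Gaussian-type heat-kernel bounds (or, equivalently, the full scale of weighted $L^\infty$--$L^\infty$ parabolic estimates including the Schauder-type derivative gains $\sup_{x,t}|(t^{1/2}\nabla)^\alpha(t\partial_t)^\beta u|$) for $\mathcal L_{g_0(t)}$. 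Here the quadratic curvature decay $|\Rm(g_0)|(x)\le C(1+d_{g_0}^2(p,x))^{-1}$ and the Euclidean-volume-growth (non-collapsing) property of the soliton are exactly what is needed: they give uniform local geometry, a uniform Sobolev inequality, and control of the soliton potential $f$ (with $|\nabla f|\sim d_{g_0}(p,\cdot)/2$ and $f$ quadratic at infinity), so that the drift term $\Li_{V}$ and the curvature terms in $\mathcal L_{g_0(t)}$ are subordinate to the Laplacian. I would derive the short-time/interior estimates from the parabolic De Giorgi--Nash--Moser machinery plus bootstrap, and the global-in-time decay from the scaling, patching scales $t\in[2^k,2^{k+1}]$.

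With the linear theory in hand, the nonlinear step is routine: define $\Phi(h)$ to be the solution $u$ of the linear problem with $F=Q(h,\nabla h)$ and $u_0 = g(0)-g_0$, check that $Q$ maps $B_X(0,\delta)$ into $Y$ with $\|Q(h,\nabla h)\|_Y \lesssim \|h\|_X^2$ (using that $h * \nabla^2 h$ and $\nabla h * \nabla h$ each carry the right weight to land in $Y$, since $(t^{1/2})^2 = t$ absorbs the two extra spatial derivatives and there is one extra power of the small $L^\infty$ size), and similarly that $\|Q(h_1,\cdot)-Q(h_2,\cdot)\|_Y \lesssim (\|h_1\|_X+\|h_2\|_X)\|h_1-h_2\|_X$. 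Then for $\epsilon$ small, $\Phi$ is a contraction on $B_X(g_0,C\epsilon)$, its fixed point is the desired global solution, and uniqueness in $B_X(g_0,\epsilon)$ is immediate from the contraction property; this also yields assertion~(1) with $c_{\alpha,\beta}$ coming from the weighted norm defining $X$. Finally, analyticity in space and time follows by a standard argument: once the solution is smooth with the scale-invariant bounds of (1), one applies the complexification/parameter-differentiation technique (as in Koch--Lamm, or via Kotschwar-type methods for Ricci-type flows), the curvature decay again guaranteeing that the relevant weighted estimates persist uniformly in the complex time parameter.
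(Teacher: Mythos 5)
Your overall framework is the same as the paper's: recast the DeTurck flow as $(\partial_t-L_t)h=R_0[h]+\nabla^{g_0(t)}R_1[h]$ with $L_t$ the time-dependent Lichnerowicz operator, prove $\|h\|_X\le C(\|h_0\|_{L^\infty}+\|F\|_Y)$ for the linear problem, check the quadratic estimates on the nonlinearity, and close by contraction mapping. That skeleton is correct, and your identification of the roles of the quadratic curvature decay and the non-collapsing is accurate.

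However, there is a genuine gap at what you yourself call the technical heart, and the substitute you propose would fail. The Lichnerowicz operator carries the zeroth-order term $2\Rm(g_0(t))\ast h-\Ric\otimes h-h\otimes\Ric$, which does \emph{not} have a favorable sign (your remark about the ``good sign on the trace part'' is backwards: applied to positive multiples of $g_0(t)$, the curvature term is amplifying, and the paper notes explicitly that the semigroup is not contractive on $L^p$ for $p>1$). Consequently, De Giorgi--Nash--Moser plus Gronwall only yields $\sup_M|h(t)|\le (1+t)^{c}\|h_0\|_{L^\infty}$, and patching dyadic time scales $[2^k,2^{k+1}]$ compounds a per-scale constant strictly larger than $1$, so neither route gives the \emph{uniform-in-time} $L^\infty$ bound that the definition of $X$ requires. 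The paper's key device, absent from your proposal, is the observation that $H(t):=g_0(t)+2(1+t)\Ric(g_0(t))$ solves $\partial_tH=L_tH$ exactly (because $\Ric$ satisfies the Lichnerowicz heat equation along Ricci flow) and is positive definite when $\Ric(g_0)\ge 0$; using $c\|h_0\|_{L^\infty}H(t)$ as a barrier in the tensor maximum principle gives the time-uniform $L^\infty$ estimate, and a companion comparison theorem ($-u\,g_0(t)\le h(t)\le u\,g_0(t)$ with $u$ solving the scalar equation with potential $\R_{g_0(t)}$, valid since $\R g-2\Rm\ast\ge 0$ for nonnegative sectional curvature) reduces the tensor heat kernel to a scalar one. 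A second, related omission: the off-diagonal Gaussian bounds cannot be taken off the shelf because the $L^2$ energy of solutions is not monotone; the paper adapts Grigor'yan's integrated-energy method by showing that $(1+t)^{-c}\mathscr{E}_D(t)$ is non-increasing, which crucially exploits the Type III decay $|\Rm(g_0(t))|\le c/(1+t)$ of the self-similar background, not merely uniform boundedness of curvature. Without these two ingredients your linear estimate, and hence the fixed-point argument, does not close.
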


Let us comment the assumptions and the proof of theorem \ref{main-theo}. The condition on the sign of the curvature operator is global whereas the one on the decay of the curvature is asymptotic (or local at infinity) : it ensures in particular that the asymptotic cone is $C^{1,\alpha}$ for any $\alpha\in(0,1)$ as shown in \cite{Che-Der}. Both assumptions are crucial to estimate the homogeneous equation associated to the DeTurck Ricci flow (section \ref{sec-est-hom-equ}) : 

\[
\left\{
\begin{array}{rl}
&\partial_th=L_t h, \quad \mbox{on $M^n\times (0,+\infty)$,}\\
&\\
& h(0)\in S^2T^*M,\quad\|h(0)\|_{L^{\infty}(M^n,g_0)} <+\infty,
\end{array}
\right.
\]
where $(L_t)_{t\geq 0}$ is the one parameter family of time dependent Lichnerowicz operators associated to the background expander $(M^n,g_0(t))_{t\geq0}$ defined on the space $S^2T^*M$ of symmetric $2$-tensors on $M$ by : 
\begin{eqnarray*}
&&L_th:=\Delta_{g_0(t)}h+2\Rm(g_0(t))\ast h-\Ric(g_0(t))\otimes h-h\otimes \Ric(g_0(t)),\\
&& (\Rm(g_0(t))\ast h)_{ij}:=\Rm(g_0(t))_{iklj}h_{kl},\quad h\in S^2T^*M.
\end{eqnarray*}

The main strategy to prove theorem \ref{main-theo} is due to Koch and Lamm \cite{Koc-Lam-Rou} inspired by a previous work on the Navier-Stokes equation by Koch and Tataru \cite{Koc-Tat-Nav-Sto} : it is based on Gaussian bounds for large time of the heat kernel associated to the linearized operator. Of course, such bounds are given for free in the Euclidean case, reinterpreted here as the Gaussian expanding soliton. Therefore, the main challenge of this paper is to get uniform-in-time Gaussian bounds for the heat kernel associated to the time dependent Lichnerowicz operator. We describe succinctly  the main estimates we need to reach such goal :
\begin{enumerate}
\item We establish a $L^{\infty}-L^{\infty}$ bound together with an $L^1-L^1$ bound for this heat operator : only the positivity of the curvature operator is used. See section \ref{sec-hea-ker-est}. This gives more or less an on-diagonal bound for the heat kernel, modulo a Nash-Moser iteration.\\
\item This implies that the heat operator is bounded on $L^p$ for any $p\in[1,+\infty]$ but it does not imply that it is contractive, unless the expander is flat. In particular, the $L^2$ norm of the solution to the homogeneous equation is not decreasing. Despite this lack of monotonicity, we are able to adapt the ideas of Grigor'yan \cite{Gri-Upp-Ene-Dav} to get an off-diagonal bound.
\end{enumerate} 

We mention that the literature on heat kernel bounds in various geometric settings is immense : see the book \cite{Gri-Hea-Ker-Boo} and the references therein in the case of static metrics. On the other hand, more and more refined Gaussian estimates for the heat kernel along the Ricci flow for short time are being investigated intensively under weaker and weaker curvature bounds : see the recent work by Bamler and Zhang \cite{Bam-Zha-Scal-I} and the references therein in the presence of a uniform bound on the scalar curvature only.

Let us describe now briefly where our main theorem \ref{main-theo} is placed in the panorama of stability results along the Ricci flow : the methods are very different whether the manifold is compact (without boundary) or not. Indeed, the $L^2$ spectrum of the Lichnerowicz operator is not discrete anymore in the non compact case.\\

Concerning the compact case : see \cite{Has-Mul-Lam} in the Ricci flat case then \cite{Kro-Sta-Ins} for Ricci solitons.
Their work consists in establishing an adequate Lojasiewicz inequality for the relevant entropies (introduced by Perelman). This approach goes back to the work of Leon Simon. It has been used recently by Colding-Minicozzi for Ricci flat non collapsed non compact Riemmanian manifolds \cite{Col-Min-Ein-Tan-Con}. See also the work of Sesum \cite{Ses-Lin-Dyn-Sta} under integrability assumptions of the moduli space.\\

In the non compact case, the hyperbolic and Euclidean case have first been considered by Schnürer-Schulze-Simon \cite{Sch-Sch-Sim} : their method is based on an exhaustion procedure, solving corresponding Dirichlet DeTurck Ricci flows. The inconvenient of this approach, although being very geometric, is that it does not provide local uniqueness of the solutions in some functional space.
 Bamler \cite{Bam-Sym} proved, among other things, stability in the $L^{\infty}$ sense for non compact symmetric spaces of rank larger than $1$, the analysis is based on \cite{Koc-Lam-Rou}. The first author considered the expanding case in the $L^{\infty}\cap L^2(e^fd\mu(g))$ sense for metrics with non negative curvature operator in the spirit of \cite{Sch-Sch-Sim} : \cite{Der-Sta-Sge}. Again, as the solution is not built by a fixed point argument, we did not know if this solution was unique. By relaxing the assumptions on the initial condition, theorem \ref{main-theo} ensures the well-posedness of the Ricci flow with small initial condition (in the $L^{\infty}$ sense) for such expanders. 

 We end this introduction by explaining the reasons that led us to consider the time-dependent DeTurck Ricci flow.
In \cite{Der-Sta-Sge}, we investigated the asymptotic stability of expanding gradient Ricci solitons with initial condition belonging to a weighted space $L^2(e^fd\mu(g))$ by studying the bottom of the spectrum of the weighted laplacian $\Delta_f$. It is then legitimate to ask only for stability in the $L^{\infty}$ sense. The quasi-linear evolution equation to consider is 
\begin{eqnarray}
\partial_tg=-2\Ric_g-g+\Li_{\nabla^{g_0}f_0}(g)+\Li_{V(g,g_0)}(g),\label{eq-expanding}
\end{eqnarray}
where $(M^n,g_0,\nabla^{g_0}f_0)$ is a fixed background expanding gradient Ricci soliton and $\Li_{V(g,g_0)}(g)$ is the DeTurck's term described previously. 
The linearized operator of (\ref{eq-expanding}) is the weighted (static) Lichnerowicz operator $L:=\Delta_f+2\Rm(g)\ast$.
 In the Euclidean case, the heat kernel associated to $L=\Delta_f$ is the Mehler kernel given by : 
$$k_t(x,y)=\frac{1}{(8\pi\sinh(t/2))^{n/2}}\exp\left\{-\frac{\arrowvert x-y\arrowvert^2}{4(1-e^{-t})}-\frac{1-e^{-t/2}}{1-e^{-t}}\frac{< x,y>}{2}-\frac{nt}{4}\right\}.$$
 The first difficulty lies in estimating the derivatives in time and space of the Mehler kernel associated to a non necessarily Euclidean metric : indeed, the semigroup associated is not strongly continuous nor analytic in $L^{\infty}$ as DaPrato and Lunardi showed \cite{DaP-Lun-Orn-Uhl}. Secondly, the choice of norms is crucial : Koch and Lamm use scale invariants norms together with the implicit fact that the measure is doubling. In this case, this is not true anymore, i.e. $\vol_fB(x,2R):=\int_{B(x,2R)}e^fd\mu(g)$ is not uniformly comparable to $\vol_fB(x,R)$, which would make the analysis harder. That is why the main advantage of the time-dependent DeTurck Ricci flow on (\ref{eq-expanding}) is that the time dependent linearized operator is the unweighted classical Lichnerowicz operator $L_t$.
 Therefore, even if the coefficients are time dependent, the volume measure is now doubling and it is controlled by the time-independent asymptotic volume ratio of the expander : see appendix \ref{sol-equ-sec}. As a consequence, we get a uniform-in-time Mehler bound for the static weighted Lichnerowicz operator $L$ : \ref{subsec-lic-hea-ker}.\\

The organization of the paper is as follows : section \ref{sec-equ-flo} discusses the equivalence of various Ricci flows that are pertinent in this context and describes more precisely each non linear term that appears in the time-dependent DeTurck Ricci flow. Section \ref{sec-fct-spa} defines the relevant Banach spaces. Section \ref{sec-est-hom-equ} is devoted to the analysis of the homogeneous equation associated to the time-dependent DeTurck Ricci flow. Section \ref{sec-hea-ker-est} proves Gaussian bounds for the heat kernel associated to the time-dependent Lichnerowicz operator. Finally, section \ref{sec-est-inhom-equ} proves theorem \ref{main-theo} by a careful study of the inhomogeneous equation associated to this Ricci flow.\\

\textit{The research leading to the results contained in this paper has received  funding from
the European Research Council (E.R.C.) under European Union's Seventh Framework Program
(FP7/2007-2013)/ ERC grant  agreement No. 291060.}

\section{Equivalence of Ricci flows}\label{sec-equ-flo}

An expanding gradient Ricci soliton is a triplet $(M^n,g_0,\nabla^{g_0} f_0)$ where $(M^n,g_0)$ is a Riemannian manifold and $f_0:M\rightarrow\mathbb{R}$ is a smooth function (called the potential function) satisfying
\begin{eqnarray*}
\frac{1}{2}\Li_{\nabla^{g_0} f_0}(g_0)=\Ric(g_0)+\frac{ g_0}{2}.
\end{eqnarray*}
We assume that $(M^n,g_0)$ is complete, this suffices to ensure the completeness of the vector field $\nabla^{g_0} f_0$ : \cite{Zha-Zhu}.

Let us fix an expanding gradient Ricci soliton $(M^n,g_0,\nabla^{g_0}f_0)$. 
In this paper, we study the stability of such gradient Ricci soliton under the Ricci flow, i.e.
\[
\left\{
\begin{array}{rl}
&\partial_tg=-2\Ric(g(t)) \quad\mbox{on}\quad M\times (0,+\infty),\\
&\\
& g(0)=g_0+h,
\end{array}
\right.
\]
where $h$ is a symmetric $2$-tensor on $M$ (denoted by $h\in S^2T^*M$) such that $g(0)$ is a metric.

In this setting, a more relevant flow, called the modified Ricci flow, has been introduced in \cite{Der-Sta-Sge} :
\[
\left\{
\begin{array}{rl}
&\partial_tg=-2\Ric(g(t))-g(t)+\Li_{\nabla^{g_0}f_0}(g(t)) \quad\mbox{on}\quad M\times (0,+\infty),\\
&\\
& g(0)=g_0+h.
\end{array}
\right.
\]
It turns out that these two flows are equivalent, as shown in \cite{Der-Sta-Sge}. Here, we decide to fix the vector field $\nabla^{g_0}f_0$. We could allow the metric to vary by choosing instead $\nabla^{g(t)}f_0$, this would not affect the analysis. Choosing an implicit function $f(t)$ for any positive time should be done by a minimizing procedure involving a relevant entropy in the spirit of Perelman as it is done in \cite{Has-Mul-Lam}. 

In the spirit of the so called DeTurck's trick, as the modified Ricci flow is a degenerate parabolic equation,  we first need to consider the following modified Ricci harmonic map heat flow (MRHF) : 
\[
\left\{
\begin{array}{rl}
&\partial_tg=-2\Ric(g(t))-g(t)+\Li_{\nabla^{g_0}f_0}(g(t))+\Li_{V(g(t),g_0)}(g(t)), \quad\mbox{on}\quad M\times (0,+\infty)\\
&\\
& g(0)=g_0+h
\end{array}
\right.
\]
where $V(g(t),g_0)$ is a vector field defined globally by,
\begin{eqnarray}\label{de-turck-vect}
V(g(t),g_0):=\div_{g_0}g(t)-\frac{1}{2}\nabla^{g_0}\tr_{g_0}g(t).
\end{eqnarray}

Following \cite{Shi-Def} and \cite{Der-Sta-Sge}, the modified Ricci harmonic map heat flow can be written in coordinates as :
\begin{eqnarray*}
\partial_tg_{ij}&=&g^{ab}\nabla^{g_0,2}_{ab}g_{ij}+\nabla^{g_0}_{\nabla^{g_0}f_0}g_{ij}-g^{kl}g_{ip}\Rm(g_0)_{jklp}-g^{kl}g_{jp}\Rm(g_0)_{iklp}\\
&&+g_{ik}\Ric(g_0)_{kj}+g_{jk}\Ric(g_0)_{ki}\\
&&+g^{ab}g^{pq}\left(\frac{1}{2}\nabla^{g_0}_ig_{pa}\nabla^{g_0}_jg_{qb}+\nabla^{g_0}_ag_{jp}\nabla^{g_0}_qg_{ib}\right)\\
&&-g^{ab}g^{pq}\left(\nabla^{g_0}_ag_{jp}\nabla^{g_0}_bg_{iq}-\nabla^{g_0}_jg_{pa}\nabla^{g_0}_bg_{iq}-\nabla^{g_0}_ig_{pa}\nabla^{g_0}_bg_{jq}\right).
\end{eqnarray*}
Define $h(t):=g(t)-g_0$. The previous system can be rewritten as follows
\begin{eqnarray*}
\left(\partial_t-L_0\right)h_{ij}&=&\nabla^{g_0}_a\left(\left(\left(g_0+h\right)^{ab}-g_0^{ab}\right)\nabla^{g_0}_bh_{ij}\right)-\nabla^{g_0}_a\left(g_0+h\right)^{ab}\nabla^{g_0}_bh_{ij}-2\Rm(g_0)_{iklj}h_{kl}\\
&&-g^{kl}g_{ip}\Rm(g_0)_{jklp}-g^{kl}g_{jp}\Rm(g_0)_{iklp}+g_{ik}\Ric(g_0)_{kj}+g_{jk}\Ric(g_0)_{ki}\\
&&+\left(g_0+h\right)^{ab}\left(g_0+h\right)^{pq}\left(\frac{1}{2}\nabla^{g_0}_ih_{pa}\nabla^{g_0}_jh_{qb}+\nabla^{g_0}_ah_{jp}\nabla^{g_0}_qh_{ib}\right)\\
&&-\left(g_0+h\right)^{ab}\left(g_0+h\right)^{pq}\left(\nabla^{g_0}_ah_{jp}\nabla^{g_0}_bh_{iq}-\nabla^{g_0}_jh_{pa}\nabla^{g_0}_bh_{iq}-\nabla^{g_0}_ih_{pa}\nabla^{g_0}_bh_{jq}\right),
\end{eqnarray*}
where $\left(g_0+h\right)^{ab}:=\left((g_0+h)_{ab}\right)^{-1}$ and where $L_0h:=\Delta_{g_0}h+\nabla^{g_0}_{\nabla^{g_0}f_0}h+2\Rm(g_0)\ast h$ is the weighted Lichnerowicz operator acting on symmetric $2$-tensors. Hence, we write the previous system in the following form as in \cite{Koc-Lam-Rou} modulo an extra zeroth order term :
\begin{eqnarray}\label{Static-DeTurck-MRF}
\left(\partial_t-L_0\right)h&=& R_0[h]+\nabla^{g_0}R_1[h],
\end{eqnarray}
where,
\begin{eqnarray*}
R_{0}[h]_{ij}&:=&-2\Rm(g_0)_{iklj}h_{kl}-g^{kl}g_{ip}\Rm(g_0)_{jklp}-g^{kl}g_{jp}\Rm(g_0)_{iklp}\\
&&+g_{ik}\Ric(g_0)_{kj}+g_{jk}\Ric(g_0)_{ki}\\
&&+\frac{1}{2}\left(g_0+h\right)^{ab}\left(g_0+h\right)^{pq}(\nabla^{g_0}_ih_{pa}\nabla^{g_0}_jh_{qb}+2\nabla^{g_0}_ah_{jp}\nabla^{g_0}_qh_{ib}\\
&&-2\nabla^{g_0}_ah_{jp}\nabla^{g_0}_bh_{iq}-2\nabla^{g_0}_jh_{pa}\nabla^{g_0}_bh_{iq}-2\nabla^{g_0}_ih_{pa}\nabla^{g_0}_bh_{jq})\\
&&-\nabla^{g_0}_a\left(g_0+h\right)^{ab}\nabla^{g_0}_bh_{ij},\\
\nabla^{g_0}R_1[h]_{ij}&:=&\nabla^{g_0}_a\left(\left(\left(g_0+h\right)^{ab}-g_0^{ab}\right)\nabla^{g_0}_bh_{ij}\right).
\end{eqnarray*}
Note that one can simplify the zeroth order terms of $R_0[h]$ as follows :
\begin{eqnarray*}
&&-2\Rm(g_0)_{iklj}h_{kl}-g^{kl}g_{ip}\Rm(g_0)_{jklp}-g^{kl}g_{jp}\Rm(g_0)_{iklp}\\
&&+g_{ik}\Ric(g_0)_{kj}+g_{jk}\Ric(g_0)_{ki}\\
&=&-2\Rm(g_0)_{iklj}h_{kl}-g^{kl}h_{ip}\Rm(g_0)_{jklp}-g^{kl}h_{jp}\Rm(g_0)_{iklp}\\
&&-g^{kl}\Rm(g_0)_{jkli}-g^{kl}\Rm(g_0)_{iklj}\\
&&+g_{ik}\Ric(g_0)_{kj}+g_{jk}\Ric(g_0)_{ki}\\
&=&-2\Rm(g_0)_{iklj}h_{kl}-h_{ip}\Ric(g_0)_{jp}-h_{jp}\Ric(g_0)_{ip}\\
&&-h^{kl}h_{ip}\Rm(g_0)_{jklp}-h^{kl}h_{jp}\Rm(g_0)_{iklp}\\
&&-2\Ric(g_0)_{ij}-\Rm(g_0)_{jkli}h^{kl}-\Rm(g_0)_{iklj}h^{kl}\\
&&+2\Ric(g_0)_{ij}+h_{ik}\Ric(g_0)_{kj}+h_{jk}\Ric(g_0)_{ki}\\
&=&-2\Rm(g_0)_{iklj}(h_{kl}+h^{kl})\\
&&-h^{kl}h_{ip}\Rm(g_0)_{jklp}-h^{kl}h_{jp}\Rm(g_0)_{iklp}\\
&=&(g^{-1}\ast h\ast h\ast \Rm(g_0))_{ij},
\end{eqnarray*}
since, if $g_{kl}=(1+\lambda_k)\delta_{kl}$ is diagonalized in a orthonormal basis of $g_0$ at some point, where $\lambda_k\in\mathbb{R}$, for $k=1,...,n$,
\begin{eqnarray*}
h_{kl}+h^{kl}&=&\left(\lambda_k+(1+\lambda_k)^{-1}-1\right)\delta_{kl}\\
&=&(1+\lambda_k)^{-1}\lambda_k^2\delta_{kl}\\
&=&(g^{-1}\ast h\ast h)_{kl}.
\end{eqnarray*}
Therefore, $R_0[h]=g^{-1}\ast h\ast h\ast \Rm(g_0)+g^{-1}\ast g^{-1}\ast \nabla^{g_0}h\ast \nabla^{g_0}h.$\\

Now, we introduce the last flow equivalent to (\ref{Static-DeTurck-MRF}) that we will study in the next sections : 
\begin{eqnarray}\label{Time-Dep-DeTurck-MRF}
\left(\partial_t-L_t\right)\bar{h}= R_0[\bar{h}]+\nabla^{g_0(t)}R_1[\bar{h}],
\end{eqnarray}
with
\begin{eqnarray*}
R_{0}[\bar{h}]&:=&\bar{h}^{-1}\ast\bar{h}\ast\Rm(g_0(t))\\
&&+g(t)^{-1}\ast g(t)^{-1}\ast \nabla^{g_0(t)}\bar{h}\ast \nabla^{g_0(t)}\bar{h}\\
\nabla^{g_0(t)}R_1[\bar{h}]_{ij}&:=&\nabla^{g_0(t)}_a\left(\left(\left(g_0(t)+\bar{h}\right)^{ab}-g_0(t)^{ab}\right)\nabla^{g_0(t)}_b\bar{h}_{ij}\right),
\end{eqnarray*}
where,
\begin{eqnarray*}
&&L_t\bar{h}=\Delta_{g_0(t)}\bar{h}+2\Rm(g_0(t))\ast\bar{h}-\Ric(g_0(t))\otimes \bar{h}-\bar{h}\otimes \Ric(g_0(t)),\\
&& g_0(t):=(1+t)\phi_t^*g_0,\quad \bar{h}(t):=(1+t)\phi_t^*h(\ln(1+t)),\quad t\geq 0,\\
&&\partial_t\phi_t=-\frac{\nabla^{g_0}f_0}{1+t}\circ\phi_t.
\end{eqnarray*}
The flow (\ref{Time-Dep-DeTurck-MRF}) can be expressed globally by :
\begin{eqnarray*}
&&\partial_t\bar{g}=-2\Ric(\bar{g}(t))+\Li_{V(\bar{g}(t),g_0(t))}(g(t)),\\
&& \bar{g}(t):=g_0(t)+\bar{h}(t).
\end{eqnarray*}

\section{Function spaces}\label{sec-fct-spa}

Let $(M^n,g_0(t))_{t\geq 0}$ be a Ricci flow. Let $p\in (0,+\infty]$. For a family of symmetric $2$-tensors $(h(t))_{t\geq 0}\in S^2T^*M$, we define the average parabolic $L^p$-norm of $h$ as follows : 
\begin{eqnarray*}
&&\nor h\|_{L^p(P(x,R))}:=\left(\fint_{P(x,R)}\arrowvert h\arrowvert^p_{g_0(s)}(y,s)d\mu_{g_0(s)}(y)ds\right)^{1/p},\\
&&P(x,R):=\{(y,s)\in M\times \mathbb{R}_{+}^*\quad|\quad s\in (0,R^2],\quad y\in B_{g_0(s)}(x,R)\},
\end{eqnarray*}
 where $d\mu_{g_0(s)}(y)$ is the Riemannian measure associated to the metric $g_0(s)$ and where $B_{g_0(s)}(x,R)$ denotes the geodesic ball for the metric $g_0(s)$ centered at $x$ of radius $R$.

We define the function space $X$ as in \cite{Koc-Lam-Rou} : 
\begin{eqnarray*}
X&:=&\{h\quad|\quad\|h\|_{X}<+\infty\},\quad\mbox{where},\\
\|h\|_X&:=&\sup_{t\geq 0}\|h(t)\|_{L^{\infty}(M,g_0(t))}\\
&&+\sup_{(x,R)\in M\times \mathbb{R}_+^*}\left(R\nor  \nabla h\|_{L^2(P(x,R))}+\nor\sqrt{t}\nabla h\|_{L^{n+4}\left(P(x,R)\setminus P\left(x,\frac{R}{\sqrt{2}}\right)\right)}\right).
\end{eqnarray*}

We also consider the space $Y:=Y^0+\nabla Y^1,$ where 
\begin{eqnarray*}
 &&\|h\|_{Y_0}:=\sup_{(x,R)\in M\times\mathbb{R}_+^*}\left(R^2\nor h\|_{L^1(P(x,R))}+R^2\nor h\|_{L^{\frac{n+4}{2}}\left(P(x,R)\setminus P\left(x,\frac{R}{2}\right)\right)}\right),\\
 &&\|h\|_{Y_1}:=\sup_{(x,R)\in M\times\mathbb{R}_+^*}\left(R\nor h\|_{L^2(P(x,R))}+\nor \sqrt{t}h\|_{L^{n+4}\left(P(x,R)\setminus P\left(x,\frac{R}{2}\right)\right)}\right).\\
\end{eqnarray*}

These spaces are invariant under the following scaling, for some positive radius $R$ : 
\begin{eqnarray*}
g_{0,R}(t):=R^{-2}g_0(R^2t),\quad h_R(t):=R^{-2}h(R^2t),\quad t\geq 0,
\end{eqnarray*}
where $(g_0(t),h(t))$ is a solution to the Ricci-DeTurck flow (\ref{Time-Dep-DeTurck-MRF}).

As noticed in \cite{Koc-Lam-Rou}, we have the following estimate : 
\begin{lemma}\label{lemma-contraction-map}
Let $(M^n,g_0,\nabla^{g_0}f_0)$, $n\geq 3$, be an expanding gradient Ricci soliton with quadratic curvature decay. Then, for any $\gamma\in(0,1)$, the operator $R_0[\cdot]+\nabla R_1[\cdot]: B_X(0,\gamma)\subset X\rightarrow Y$ is analytic and satisfies
\begin{eqnarray*}
&&\|R_0[h]+\nabla R_1[h]\|_Y\leq c(\gamma,n,g_0)\| h\|^2_X,\quad h\in B_X(0,\gamma),\\
&&\\
&&\|R_0[h']-R_0[h]+\nabla (R_1[h']-R_1[h])\|_Y\leq c(\gamma,n,g_0)\left(\| h'\|_X+\| h \|_X\right)\| h'-h\|_X,
\end{eqnarray*}
for any  $h,h'\in B_X(0,\gamma)$.

\end{lemma}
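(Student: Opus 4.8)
The plan is to reduce the lemma to a finite list of pointwise bilinear algebraic inequalities combined with the scaling-invariant structure of the norms $X$ and $Y$, following the template of Koch--Lamm. First I would recall the explicit form of the nonlinearity obtained in Section~\ref{sec-equ-flo}, namely
\[
R_0[h]+\nabla R_1[h]=g^{-1}\ast h\ast h\ast \Rm(g_0(t))+g^{-1}\ast g^{-1}\ast\nabla^{g_0(t)}h\ast\nabla^{g_0(t)}h+\nabla^{g_0(t)}\!\big((g^{-1}-g_0^{-1})\ast\nabla^{g_0(t)}h\big),
\]
where $g=g_0(t)+h$. On $B_X(0,\gamma)$ with $\gamma<1$ the tensor $g^{-1}$ and the difference $g^{-1}-g_0^{-1}$ are smooth (indeed real-analytic) functions of $h$ with $\|g^{-1}\|_{L^\infty}\leq C(\gamma)$ and $\|g^{-1}-g_0^{-1}\|_{L^\infty}\leq C(\gamma)\|h\|_{L^\infty}$; this is where analyticity of the map comes from, since each term is a convergent power series in $h$ with coefficients built from contractions with $\Rm(g_0(t))$ or with $\nabla^{g_0(t)}h$. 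The Lipschitz-type second estimate then follows formally from the first by polarization, i.e. writing $R_0[h']-R_0[h]$ as a sum of terms each containing one factor $h'-h$ and the remaining factors equal to $h$ or $h'$, so I will concentrate on the quadratic bound.

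The core is to bound the three types of terms in $\|\cdot\|_Y=\|\cdot\|_{Y_0+\nabla Y_1}$. The curvature term $g^{-1}\ast h\ast h\ast\Rm(g_0(t))$ I place in $Y_0$: using the quadratic curvature decay $|\Rm(g_0(t))|\leq C/(1+d^2)$ together with the self-similar scaling $g_0(t)=(1+t)\phi_t^*g_0$ (so that along the parabolic cylinder $P(x,R)$ the curvature obeys $|\Rm(g_0(s))|\lesssim \min(1/s,\,1/d_{g_0(s)}^2(p,\cdot))$), one estimates $R^2\,\nor g^{-1}\ast h\ast h\ast\Rm(g_0(t))\|_{L^1(P(x,R))}\lesssim \|h\|_{L^\infty}^2\cdot R^2\fint_{P(x,R)}|\Rm(g_0(s))|$, and the scale-invariant integral $R^2\fint_{P(x,R)}|\Rm|$ is uniformly bounded because the measure is doubling with constant controlled by the asymptotic volume ratio of the expander (appendix~\ref{sol-equ-sec}) and $|\Rm|\lesssim 1/s$. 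The $L^{(n+4)/2}$ piece of $Y_0$ is handled the same way on the annular region where one additionally has $\sqrt{t}\sim R$. The two gradient-quadratic terms $g^{-1}\ast g^{-1}\ast\nabla h\ast\nabla h$ and $(g^{-1}-g_0^{-1})\ast\nabla h$ I place in $\nabla Y_1$ and in $Y_1$ respectively (the first only after writing $\nabla h\ast\nabla h=\nabla(h\ast\nabla h)-h\ast\nabla^2 h$ is \emph{not} available, so instead I keep it in $Y_0$ via $\nor \nabla h\ast\nabla h\|_{L^1}\lesssim \nor\nabla h\|_{L^2}^2$ — this is exactly what the $R\,\nor\nabla h\|_{L^2(P(x,R))}$ term in $\|h\|_X$ controls, giving $R^2\nor\nabla h\ast\nabla h\|_{L^1(P(x,R))}\lesssim (R\nor\nabla h\|_{L^2})^2\leq\|h\|_X^2$). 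For the divergence term I use the $Y_1$ norm directly: $R\,\nor (g^{-1}-g_0^{-1})\ast\nabla h\|_{L^2(P(x,R))}\lesssim \|h\|_{L^\infty}\cdot R\,\nor\nabla h\|_{L^2(P(x,R))}\leq \|h\|_X^2$, and the higher-integrability annular piece $\nor\sqrt t\,(g^{-1}-g_0^{-1})\ast\nabla h\|_{L^{n+4}}\lesssim\|h\|_{L^\infty}\nor\sqrt t\,\nabla h\|_{L^{n+4}}$ is exactly matched by the second term in $\|h\|_X$. Assembling these, every contribution is bounded by $c(\gamma,n,g_0)\|h\|_X^2$.

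The main obstacle is the bookkeeping for the \emph{higher-integrability} (i.e. $L^{n+4}$ and $L^{(n+4)/2}$) pieces on the annular sets $P(x,R)\setminus P(x,R/\sqrt2)$, because there one must genuinely use the factor $\sqrt t$ built into the $X$-norm and verify that on that annulus $\sqrt t$ is comparable to $R$, which requires knowing the cylinder is essentially a product in the self-similar coordinates; this is where the time-dependent nature of $g_0(t)$ and the noncompactness interact. A secondary technical point is that the quadratic term in $\nabla h$ lands in $Y_0$ rather than $\nabla Y_1$, so one must check the $Y_0$-seminorm of $\nabla h\ast\nabla h$, and in particular that the $L^{(n+4)/2}$ annular part is controlled by $\nor\sqrt t\,\nabla h\|_{L^{n+4}}^2$ after using $\sqrt t\sim R$ — an application of Hölder on the annulus. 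Once these scale-invariant Hölder estimates are in place, analyticity of $h\mapsto R_0[h]+\nabla R_1[h]$ as a map $B_X(0,\gamma)\to Y$ follows because it is a (locally uniformly) convergent power series whose terms are multilinear and continuous by the same estimates, and the difference bound is the corresponding telescoping identity; this is identical in structure to \cite{Koc-Lam-Rou} and I would simply indicate the modifications forced by the curvature decay hypothesis and the doubling property from appendix~\ref{sol-equ-sec}.
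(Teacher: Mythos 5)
Your overall plan matches the paper's: the nonlinearity decomposes into a curvature quadratic term, a gradient quadratic term, and a divergence term, and the gradient and divergence terms are handled by scale-invariant H\"older estimates exactly as you describe (the paper simply says ``this can be checked directly'' for these, and singles out the curvature term as the only non-trivial estimate). Your remarks on analyticity (convergent power series in $h$) and on deriving the Lipschitz bound by polarization are also fine.

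However, the core curvature estimate in your proposal has a genuine gap. You correctly note the pointwise bound $|\Rm(g_0(s))|\lesssim\min(1/s,\,1/d^2_{g_0(s)}(p,\cdot))$ (more precisely $c/(1+s+d^2_{g_0(s)}(p,\cdot))$), but then claim that $R^2\fint_{P(x,R)}|\Rm|$ is uniformly bounded ``because the measure is doubling \ldots and $|\Rm|\lesssim 1/s$.'' That is not enough: writing out the average with $|\Rm|\leq c/(1+s)$ and $\vol B_{g_0(s)}(x,R)\sim R^n$ gives
\begin{eqnarray*}
R^2\fint_{P(x,R)}|\Rm| \sim \frac{1}{R^n}\int_0^{R^2}\int_{B_{g_0(s)}(x,R)}|\Rm|\,d\mu\,ds \lesssim \int_0^{R^2}\frac{ds}{1+s} = \ln(1+R^2),
\end{eqnarray*}
which diverges as $R\to\infty$. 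The time decay alone cannot close the estimate; the spatial decay $|\Rm|\lesssim 1/d^2(p,\cdot)$ is essential. The paper's proof performs a case split: if $d_{g_0(t)}(p,x)\geq 2R$ the entire ball stays far from $p$ and the spatial decay gives $\fint_{B}|\Rm|\leq c/(1+R^2)$, so integrating over $s\in(0,R^2]$ yields a uniform constant; if $d_{g_0(t)}(p,x)\leq 2R$, one passes to $B_{g_0(t)}(p,3R)$ and the co-area formula gives $\int_B|\Rm|\lesssim \int_0^{3R}r^{n-1}/(1+r^2)\,dr\lesssim R^{n-2}$, which again closes the estimate --- but this step uses $n\geq 3$ in an essential way (for $n=2$ one gets only $\ln R$). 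Your proposal never invokes this case split and never uses the hypothesis $n\geq 3$, so the argument for the curvature term does not go through as written.
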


\begin{proof}
This can be checked directly. The only non trivial estimate concerns the zeroth order quadratic term of $R_0[h]$ which uses the decay of the curvature tensor in an essential way. Indeed, as the metric $g_0$ has quadratic curvature decay, the metric $g_0(t)$ has quadratic curvature decay too and if $p$ is such that $\nabla^{g_0}f_0(p)=0$,
\begin{eqnarray*}
\arrowvert\Rm(g_0(t))\arrowvert_{g_0(t)}(x)\leq \frac{c}{1+t+d_{g_0(t)}^2(p,x)}, \quad t\geq 0,\quad x\in M,
\end{eqnarray*}
for some positive constant $c$ independent of time. Therefore,
\begin{eqnarray*}
\fint_{B_{g_0(t)}(x,R)}\arrowvert\Rm(g_0(t))\arrowvert_{g_0(t)}d\mu_{g_0(t)}\leq c\fint_{B_{g_0(t)}(x,R)} \frac{1}{1+t+d_{g_0(t)}^2(p,y)}   d\mu_{g_0(t)},
\end{eqnarray*}
for any nonnegative time $t$. If $d_{g_0(t)}(p,x)\geq 2R$, then $d_{g_0(t)}(p,y)\geq R$ for any $y\in B_{g_0(t)}(x,R)$ and 
\begin{eqnarray*}
\fint_{B_{g_0(t)}(x,R)} \frac{1}{1+t+d_{g_0(t)}^2(p,y)}d\mu_{g_0(t)}\leq \frac{c}{1+R^2},
\end{eqnarray*}
for some positive constant $c$ uniform in time, space and radius $R$. If $d_{g_0(t)}(p,x)\leq 2R$ then, by the co-area formula,
\begin{eqnarray*}
\int_{B_{g_0(t)}(x,R)} \frac{1}{1+t+d_{g_0(t)}^2(p,y)}d\mu_{g_0(t)}&\leq& \int_{B_{g_0(t)}(p,3R)}\frac{1}{1+t+d_{g_0(t)}^2(p,y)}d\mu_{g_0(t)}\\
&\leq&c(n)\int_0^{3R}\frac{r^{n-1}}{1+r^2}dr\\
&\leq& c(n) R^{n-2},
\end{eqnarray*}
if $n\geq 3$. In any case, we get : 
\begin{eqnarray*}
\int_0^{R^2}\fint_{B_{g_0(t)}(x,R)}\arrowvert\Rm(g_0(t))\arrowvert_{g_0(t)}d\mu_{g_0(t)}\leq c,
\end{eqnarray*}
for some positive constant $c$ uniform in time, space and radius $R$. 

\end{proof}

\section{Estimates for the homogeneous linear equation}\label{sec-est-hom-equ}
\begin{theo}\label{Est-Hom-lin-equ}
Let $(M^n,g_0(t))_{t\geq 0}$, $n\geq 3$, be an expanding gradient Ricci soliton with positive curvature operator and with quadratic curvature decay, i.e. such that $$\limsup_{x\rightarrow+\infty}d_{g_0}(p,x)^2\R_{g_0}(x)<+\infty,$$ for some (any) point $p\in M$. Let $(h(t))_{t\geq 0}$ be a solution to the homogeneous linear equation $\partial_th=L_th$ with $h_0=h(0)\in L^{\infty}(S^2T^*M,g_0)$. Then, $(h(t))_{t\geq 0}\in X$, and
\begin{eqnarray*}
\| h\|_X\leq c\| h_0\|_{L^{\infty}(M,g_0)},
\end{eqnarray*}
for some uniform positive constant $c$.\\
\end{theo}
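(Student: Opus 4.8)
The plan is to estimate separately the three pieces that make up $\|h\|_X$: the time-uniform sup norm, the weighted $L^2$ gradient term, and the weighted $L^{n+4}$ gradient term, in that order.

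\emph{Step 1: the sup bound $\sup_{t\ge0}\|h(t)\|_{L^\infty(M,g_0(t))}\le c\|h_0\|_{L^\infty(M,g_0)}$.} This is where the positivity of the curvature operator is essential. Along the Ricci flow $\partial_tg_0(t)=-2\Ric(g_0(t))$ a solution of $\partial_th=L_th$ satisfies
\begin{eqnarray*}
\left(\partial_t-\Delta_{g_0(t)}\right)|h|_{g_0(t)}^2=-2|\nabla h|^2+4\langle\Rm(g_0(t))\ast h,h\rangle ,
\end{eqnarray*}
the $\Ric$-terms of $L_t$ cancelling exactly against the evolution of the metric. The curvature reaction term has no definite sign, so the $L^2$ norm need not be monotone; but, by the positivity of $\Rm(g_0)$ together with the quadratic decay $|\Rm(g_0(t))|_{g_0(t)}\le c/(1+t+d_{g_0(t)}^2(p,\cdot))$, the potential $|\Rm(g_0(t))|$ lies in a scale-invariant parabolic Morrey class -- this is precisely the content of the computation already performed in the proof of Lemma \ref{lemma-contraction-map}. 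A parabolic Moser iteration (or, once the resulting subsolution property for $|h|$ is isolated, the maximum principle on the complete, non-collapsed, bounded-geometry manifold $(M,g_0(t))$) then yields the bound with a constant depending only on that Morrey norm, hence uniform in time. By duality, and using $\R_{g_0}\ge0$, one also records the $L^1$--$L^1$ bound $\int_M|h(t)|\,d\mu_{g_0(t)}\le\int_M|h_0|\,d\mu_{g_0}$, which feeds the Gaussian estimates of section \ref{sec-hea-ker-est}.

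\emph{Step 2: the term $R\,\nor\nabla h\|_{L^2(P(x,R))}$.} I would run a Caccioppoli estimate: testing $\partial_th=L_th$ against $\varphi^2h$ for a space-time cut-off $\varphi$ adapted to $P(x,2R)$ (with $|\nabla\varphi|\le c/R$, $|\partial_t\varphi|\le c/R^2$) and integrating by parts gives
\begin{eqnarray*}
\iint_{P(x,R)}|\nabla h|_{g_0(s)}^2\,d\mu_{g_0(s)}ds\le \frac{c}{R^2}\iint_{P(x,2R)}|h|^2+c\iint_{P(x,2R)}|\Rm(g_0(s))||h|^2+c\!\!\int_{B_{g_0(0)}(x,2R)}\!\!|h_0|^2 .
\end{eqnarray*}
Plugging in Step 1 and using the Euclidean volume growth of the expander, which is uniform in $s$ through the time-independent asymptotic volume ratio ($\vol_{g_0(s)}B_{g_0(s)}(x,R)\approx R^n$, so $|P(x,R)|\approx R^{n+2}$), together with the integrated curvature bound $\int_0^{R^2}\fint_{B_{g_0(s)}(x,2R)}|\Rm(g_0(s))|\,d\mu_{g_0(s)}\le c$ of Lemma \ref{lemma-contraction-map}, each term on the right is $\le c\|h_0\|_{L^\infty}^2R^n$; this is exactly $R^2\,\nor\nabla h\|^2_{L^2(P(x,R))}\le c\|h_0\|^2_{L^\infty}$.

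\emph{Step 3: the term $\nor\sqrt t\nabla h\|_{L^{n+4}(P(x,R)\setminus P(x,R/\sqrt2))}$.} Here I would use interior parabolic regularity at the natural scale $\sqrt t$. The key point is that the soliton-rescaled quadratic decay forces $|\Rm(g_0(s))|_{g_0(s)}\le c/(1+s)$, so $s|\Rm(g_0(s))|\le c$; hence at every $(y,s)$ with $s>0$ the manifold $(M,g_0(s))$ has bounded geometry -- all derivatives of curvature being controlled as well, since $g_0$ is a soliton with quadratic curvature decay -- on the parabolic cylinder $Q(y,\sqrt s)$. Differentiating the equation, $\nabla h$ solves a parabolic system whose coefficients are of size $\le c/s$ at that scale and whose inhomogeneity is $\le c\|h\|_{L^\infty}/s^{3/2}$; a local $L^2$--$L^\infty$ estimate (Moser iteration, or a Bernstein--Bando--Shi cut-off argument) combined with Steps 1 and 2 then gives
\begin{eqnarray*}
\sqrt s\,|\nabla h|(y,s)\le c\left(\fint_{Q(y,\sqrt s)}|\nabla h|^2\right)^{1/2}+c\|h\|_{L^\infty}\le c\|h_0\|_{L^\infty(M,g_0)} ,
\end{eqnarray*}
so that $\sqrt t\,|\nabla h|$ is in fact bounded pointwise for $t>0$ -- for the homogeneous equation this is stronger than recorded in $X$, where it is demanded only on the annular region because general solutions cannot be expected to satisfy it near $t=0$. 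In particular the $L^{n+4}$ average of $\sqrt t\,|\nabla h|$ over $P(x,R)\setminus P(x,R/\sqrt2)$ is $\le c\|h_0\|_{L^\infty}$, and adding the three bounds yields $\|h\|_X\le c\|h_0\|_{L^\infty(M,g_0)}$.

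The main obstacle is Step 1: in the Euclidean case of \cite{Koc-Lam-Rou} the sup bound is immediate since $h$ then solves a genuine heat equation, whereas here the curvature reaction term destroys the monotonicity of the $L^2$ norm and one must exploit both the sign of the curvature operator and the scale invariance of the quadratic curvature decay -- which makes $|\Rm(g_0(t))|$ an admissible Kato-type potential with a time-independent Morrey norm -- in order to close a uniform-in-time estimate. The remaining two steps are, as in \cite{Koc-Lam-Rou}, Caccioppoli and local parabolic regularity, the only soliton-specific inputs being the curvature bound of Lemma \ref{lemma-contraction-map} and the non-collapsing of the background.
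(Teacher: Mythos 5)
Your Steps 2 and 3 are essentially the paper's argument (a Caccioppoli estimate with the integrated curvature bound of Lemma \ref{lemma-contraction-map}, then a Bernstein-type evolution inequality for $\arrowvert h\arrowvert^2+t\arrowvert\nabla h\arrowvert^2$ closed by a local $L^1$ mean value inequality), and they would go through as you describe \emph{provided} Step 1 holds. The gap is in Step 1. The mechanism you propose -- $\arrowvert\Rm(g_0(t))\arrowvert$ lies in a scale-invariant parabolic Morrey class, hence Moser iteration (or a maximum principle) gives a time-uniform $L^\infty$ bound -- does not close. The decay $\arrowvert\Rm(g_0(t))\arrowvert\leq c/(1+t+d_{g_0(t)}^2(p,\cdot))$ is exactly the Hardy/critical scale: its scale-invariant Morrey norm is finite but not small, it is not Green-bounded (the relevant integral diverges logarithmically), and for a potential of this size with no smallness the best one extracts from pure size information is polynomial growth, $\sup_M\arrowvert h\arrowvert(t)\leq (1+t)^{C}\sup_M\arrowvert h_0\arrowvert$. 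The paper itself exhibits this phenomenon: the scalar operator $\Delta_{g_0(t)}+\R_{g_0(t)}$ has a potential of the same size and sign, and by Proposition \ref{L^1-bound-heat-kernel-fct} and Remark \ref{rk-L^1-L^infty} its semigroup is \emph{not} $L^\infty$-contractive and the mass $\int_MK(x,t,y,s)d\mu_{g_0(s)}(y)$ genuinely grows like $((1+t)/(1+s))^c$ unless the soliton is flat. So no argument that only sees $\arrowvert\Rm\arrowvert$ as an admissible Kato-type potential can produce the uniform-in-time bound; you invoke the positivity of the curvature operator but your Step 1 never actually uses it, since (as you note yourself) the reaction term $\langle\Rm\ast h,h\rangle$ has no sign for general $h$.

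What the paper does instead is a comparison/barrier argument that uses the sign of the curvature in an essential, tensorial way. The tensor $H(t):=g_0(t)+2(1+t)\Ric(g_0(t))=(1+t)\phi_t^*\Li_{\nabla^{g_0}f_0}(g_0)$ is an exact solution of $\partial_tH=L_tH$, is positive definite and uniformly equivalent to $g_0(t)$ (here the Type III bound $(1+t)\arrowvert\Ric(g_0(t))\arrowvert\leq c$ enters), so $\pm h_0\leq c\|h_0\|_{L^\infty}H(0)$. One then shows that $c\|h_0\|_{L^\infty}H(t)\mp h(t)\geq 0$ is preserved by Hamilton's null-eigenvector maximum principle: at a first zero in the direction $u$ of a nonnegative tensor, $(\Rm(g_0)\ast h_\epsilon)(u,u)\geq 0$ precisely because the sectional (curvature operator) positivity pairs against a nonnegative $2$-tensor. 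The non-compactness is handled with the exhaustion function built from the potential $f$. This is where both hypotheses of the theorem are consumed, and it is the step your proposal is missing; without it, Steps 2 and 3 (which feed on $\sup_t\|h(t)\|_{L^\infty}$) are not justified either.
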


\begin{rk}
The proof of theorem \ref{Est-Hom-lin-equ} actually shows that the heat kernel associated to the Lichnerowicz operator is positivity improving in case $(M^n,g_0(t))_{t\geq 0}$ is an expanding gradient Ricci soliton with positive curvature operator, i.e. with the notations of theorem \ref{Est-Hom-lin-equ},
\begin{eqnarray*}
h_0\geq 0,\quad h_0\in S^2T^*M\setminus\{0\}\Longrightarrow h(t)>0,\quad t>0.\\
\end{eqnarray*}

\end{rk}

\begin{proof}
\begin{itemize}
\item $L^{\infty}$ estimate :\\

First of all, let us notice that $H(t):=g_0(t)+2(1+t)\Ric(g_0(t))$ satisfies $\partial_tH=L_tH$, $H(0)=g_0+2\Ric(g_0)(=\Li_{\nabla^{g_0}f_0}(g_0)).$ 

Indeed, this comes from the well-known fact that the Ricci curvature $\Ric(g_0(t))$ satisfies $\partial_t\Ric(g_0(t))=L_t\Ric(g_0(t))$ along a Ricci flow $(g_0(t))_{t\geq 0}$. On the other hand, there is some constant $c$ such that 
\begin{eqnarray*}
h(0)-c\arrowvert h_0\arrowvert_{L^{\infty}(M,g_0)}H(0)=h_0-c\arrowvert h_0\arrowvert_{L^{\infty}(M,g_0)}(g_0+2\Ric(g_0))\geq 0,\quad \mbox{on $M$},
\end{eqnarray*}
since $H(0)$ is positive definite (since $\Ric(g_0)\geq 0$).

We claim that for any positive time $t$, $h(t)-c\arrowvert h_0\arrowvert_{L^{\infty}(M,g_0)}H(t)\geq 0$. To prove this claim, define for any positive $\epsilon$,
\begin{eqnarray*}
h_{\epsilon}(t):=h(t)-c\arrowvert h_0\arrowvert_{L^{\infty}(M,g_0)}H(t)+\epsilon e^{Ct}\phi(t) H(t),
\end{eqnarray*}
for some positive constant $C$ to be chosen later and $\phi:M\times \mathbb{R}_+^*\rightarrow\mathbb{R}_+$ is a smooth function defined  by :
\begin{eqnarray}\label{function-phi-ad-hoc}
\phi(x,t):=\ln\left((1+t)\phi_t^*\left(f+\mu(g)+\frac{n}{2}\right)\right),
\end{eqnarray}
where $\mu(g)$ defined the entropy (see Appendix \ref{sol-equ-sec}) and where $(\phi_t)_t$ is the flow generated by $-\nabla^{g_0}f_0/(1+t)$. It is well defined since by the soliton equations, $f+\mu(g)+n/2> n/2\geq1$. In particular, $\phi$ satisfies on $M\times [0,T]$ for some positive finite time $T$,
\begin{eqnarray*}
\arrowvert\partial_t\phi\arrowvert+\arrowvert\nabla^{g_0(t)}\phi\arrowvert+\arrowvert\Delta_{g_0(t)}\phi\arrowvert\leq C(T),
\end{eqnarray*}
  for some positive space independent constant $C(T)$ depending on $T$ (at least). 

Therefore,
\begin{eqnarray*}
\partial_th_{\epsilon}&=&L_th_{\epsilon}+\epsilon e^{Ct}\left(C\phi H(t)-2\nabla^{g_0(t)}_{\nabla^{g_0(t)} \phi}H(t)-\Delta_{g_0(t)}\phi H(t)\right)\\
&\geq&L_th_{\epsilon}+\epsilon e^{Ct}\left(C\inf_{M\times [0,T]}\phi-C(T)\right)H(t),
\end{eqnarray*}
since $H(t)$ and $g(t)$ define equivalent metrics. Since $\inf_{M\times [0,T]}\phi$ is positive, one can choose $C$ big enough so that,
\begin{eqnarray*}
\partial_th_{\epsilon}&>&L_th_{\epsilon},
\end{eqnarray*}
on $M\times [0,T]$. 

Now, if $\inf_{M\times[0,T]}h_{\epsilon}<0$, there must be a first space-time point $(x_0,t_0)\in M\times[0,T]$ and some unitary vector $u\in T_xM$ such that  $h_{\epsilon}(x_0,t_0)(u)=0$ since $\phi$ is an exhaustion function (Appendix \ref{sol-equ-sec}). In particular, $h_{\epsilon}(t)\geq 0$ for  $t\leq t_0$. Extend $u$ in a neighborhood of $x_0$ by parallel transport with respect to $g_0(t_0)$ independently of time as done in [Theorem $3.3$, Chap. $2$,\cite{Ben}]. This vector field will be denoted by $U$. Then, as $t_0$ is positive, we have at $(x_0,t_0)$,
\begin{eqnarray*}
0\geq\partial_t\left(h_{\epsilon}(U,U)\right)&=&\left(\partial_th_{\epsilon}\right)(U,U)\\
&>&L_{t_0}h_{\epsilon}(U,U)\\
&=&\Delta_{g_0(t_0)}\left(h_{\epsilon}(U,U)\right)+(2\Rm(g_0(t_0))\ast h_{\epsilon})(U,U)\\
&\geq&\Delta_{g_0(t_0)}\left(h_{\epsilon}(U,U)\right)\\
&\geq&0,
\end{eqnarray*}
which is a contradiction. Therefore, $h_{\epsilon}\geq 0$ on $M\times [0,T]$ for any positive $\epsilon$, i.e. $h(t)\geq c\arrowvert h_0\arrowvert_{L^{\infty}(M,g_0)} (g_0+2\Ric(g_0))$. The same previous argument can be applied to $(-h(t))_{t\geq 0}$ which shows $\sup_{M\times \mathbb{R}_+}\|h(t)\|_{L^{\infty}(M,g_0(t))}\leq \arrowvert c\arrowvert(1+2\sup_M\Ric(g_0))\| h_0\|_{L^{\infty}(M,g_0)}.$\\

\item $L^2$ estimate : \\

Since $(h(t))_{t\geq 0}$ satisfies $\partial_th=L_th$, one has,
\begin{eqnarray*}
\partial_t\arrowvert h\arrowvert_{g_0(t)}^2\leq \Delta_{g_0(t)}\arrowvert h\arrowvert^2_{g_0(t)}-2\arrowvert\nabla^{g_0(t)}h\arrowvert_{g_0(t)}^2+c(n)\arrowvert\Rm(g_0(t))\arrowvert_{g_0(t)}\arrowvert h\arrowvert_{g_0(t)}^2.
\end{eqnarray*}
Multiplying this inequality by any smooth cut-off function $\psi^2 : M\times\mathbb{R}_+\rightarrow \mathbb{R}_+$ gives, after integrating by parts (in space) : 
\begin{eqnarray*}
&&\partial_t\int_M\psi^2\arrowvert h\arrowvert^2_{g_0(t)}d\mu_{g_0(t)}+2\int_M\psi^2\arrowvert \nabla^{g_0(t)}h\arrowvert_{g_0(t)}^2d\mu_{g_0(t)}\leq\\
&& \int_M\left<-\nabla^{g_0(t)}\psi^2,\nabla^{g_0(t)}\arrowvert h\arrowvert^2_{g_0(t)}\right>+\left(\partial_t\psi^2+c(n)\psi^2\arrowvert\Rm(g_0(t))\arrowvert_{g_0(t)}\right)\arrowvert h\arrowvert^2_{g_0(t)}d\mu_{g_0(t)}\\
&&\leq \int_M4\arrowvert\nabla^{g_0(t)}\psi\arrowvert_{g_0(t)}\arrowvert h\arrowvert_{g_0(t)}\psi\arrowvert\nabla^{g_0(t)}h\arrowvert_{g_0(t)}+\left(\partial_t\psi^2+c(n)\psi^2\arrowvert\Rm(g_0(t))\arrowvert_{g_0(t)}\right)\arrowvert h\arrowvert^2_{g_0(t)}d\mu_{g_0(t)}.
\end{eqnarray*}
 Hence, by applying Young's inequality to the term $4\arrowvert\nabla^{g_0(t)}\psi\arrowvert_{g_0(t)}\arrowvert h\arrowvert_{g_0(t)}\psi\arrowvert\nabla^{g_0(t)}h\arrowvert_{g_0(t)}$,

\begin{eqnarray*}
&&\partial_t\int_M\psi^2\arrowvert h\arrowvert^2_{g_0(t)}d\mu_{g_0(t)}+\int_M\psi^2\arrowvert \nabla^{g_0(t)}h\arrowvert_{g_0(t)}^2d\mu_{g_0(t)}\leq\\
&& c(n)\int_M\left(\arrowvert\nabla^{g_0(t)}\psi\arrowvert^2_{g_0(t)}+\partial_t\psi^2+\psi^2\arrowvert\Rm(g_0(t))\arrowvert_{g_0(t)}\right)\arrowvert h\arrowvert^2_{g_0(t)}d\mu_{g_0(t)}.
\end{eqnarray*}

Now, let $(x,R)\in M\times\mathbb{R}_+^*$ and consider the following cutoff function : $\psi_{x,R}(y,t):=\psi(d_{g_0(t)}(x,y)/R),$ where $\psi:\mathbb{R}_+\rightarrow \mathbb{R}_+ $ is a smooth function such that $\psi_{|[0,1]}\equiv 1$, $\psi_{|[2,+\infty)}\equiv 0$ and $\sup_{\mathbb{R}_+}\arrowvert\psi'\arrowvert\leq c$. $\psi_{x,R}$ is a Lipschitz function satisfying :
\begin{eqnarray*}
&&\arrowvert\nabla^{g_0(t)}\psi_{x,R}\arrowvert_{g_0(t)}\leq \frac{c}{R},\quad0\leq\partial_t\psi_{x,R}\leq \frac{c}{R\sqrt{t}},
\end{eqnarray*}
almost everywhere, by lemma \ref{C^0-distance-est-Type-III}. Hence, by integrating in time : 
\begin{eqnarray*}
&&\int_{B_{g_0(R^2)}(x,R)}\arrowvert h\arrowvert^2_{g_0(R^2)}d\mu_{g_0(R^2)}+\int_0^{R^2}\int_{B_{g_0(t)}(x,R)}\arrowvert\nabla^{g_0(t)}h\arrowvert^2_{g_0(t)}d\mu_{g_0(t)}dt\leq\\
&&\int_{B_{g_0(0)}(x,2R)}\arrowvert h(0)\arrowvert^2_{g_0(0)}d\mu_{g_0(0)}\\
&&+c\int_0^{R^2}\int_{B_{g_0(t)}(x,2R)}\left(\frac{1}{R^2}+\frac{1}{R\sqrt{t}}+\arrowvert\Rm(g_0(t))\arrowvert_{g_0(t)}\right)\arrowvert h\arrowvert^2_{g_0(t)}d\mu_{g_0(t)}dt\\
&\leq&c\| h(0)\|^2_{L^{\infty}(M,g_0)}\left(R^n +\int_0^{R^2}\int_{B_{g_0(t)}(x,2R)}\arrowvert\Rm(g_0(t))\arrowvert_{g_0(t)}\right)d\mu_{g_0(t)}dt,
\end{eqnarray*}
since $\vol_{g_0(t)}B_{g_0(t)}(x,s)\leq c(n)s^n$ for any nonnegative $s$ by the Bishop-Gromov theorem since $\Ric(g_0(t))\geq 0$. Now, as $\AVR(g_0(t))=\AVR(g_0)>0$ by Appendix \ref{sol-equ-sec}, we get the reverse uniform inequality $\vol_{g_0(t)}B_{g_0(t)}(x,s)\geq \AVR(g_0)s^n$, for any nonnegative $s$ by the Bishop-Gromov theorem again. Hence,
\begin{eqnarray*}
R^2\nor h\|_{L^2(P(x,R))}^2\leq c\| h(0)\|^2_{L^{\infty}(M,g_0)}\left(1+\int_0^{R^2}\fint_{B_{g_0(t)}(x,2R)}\arrowvert\Rm(g_0(t))\arrowvert_{g_0(t)}d\mu_{g_0(t)}dt\right).
\end{eqnarray*}

Hence the desired $L^2$ estimate by the proof of lemma \ref{lemma-contraction-map}. \\

\item $L^{n+4}$ estimate : \\
We prove that 
\begin{eqnarray}\label{grad-est-resc}
\sup_{(x,R)\in M\times(0,+\infty)}\|\sqrt{t}\nabla^{g_0(t)} h\|_{L^{\infty}\left(P(x,R)\setminus P(x,R/\sqrt{2})\right)}\leq c\| h\|_{L^{\infty}(M,g_0)}.
\end{eqnarray}
We establish rough a priori estimates by computing the evolution of $$\arrowvert h\arrowvert_{g_0(t)}^2+t\arrowvert\nabla^{g_0(t)}h\arrowvert_{g_0(t)}^2,$$ as follows : 
\begin{eqnarray*}
\partial_t\arrowvert h\arrowvert_{g_0(t)}^2&\leq& \Delta_{g_0(t)}\arrowvert h\arrowvert_{g_0(t)}^2-2\arrowvert\nabla^{g_0(t)} h\arrowvert_{g_0(t)}^2+c(n)\arrowvert \Rm(g_0(t))\arrowvert_{g_0(t)}\arrowvert h\arrowvert_{g_0(t)}^2,\\
\partial_t\left(t\arrowvert\nabla^{g_0(t)} h\arrowvert_{g_0(t)}^2\right)&\leq&\Delta_{g_0(t)}\left(t\arrowvert\nabla^{g_0(t)} h\arrowvert_{g_0(t)}^2\right)-2t\arrowvert\nabla^{g_0(t),2}h\arrowvert_{g_0(t)}^2+\arrowvert\nabla^{g_0(t)} h\arrowvert_{g_0(t)}^2\\
&&+c(n)\arrowvert\Rm(g_0(t))\arrowvert_{g_0(t)}t\arrowvert\nabla^{g_0(t)} h\arrowvert_{g_0(t)}^2\\
&&+c(n)\sqrt{t}\arrowvert\nabla^{g_0(t)}\Rm(g_0(t))\arrowvert_{g_0(t)}\sqrt{t}\arrowvert\nabla^{g_0(t)} h\arrowvert_{g_0(t)}\arrowvert h\arrowvert_{g_0(t)}.
\end{eqnarray*}
Therefore, as $(M^n,g_0(t))_{t\geq0}$ is a Type III solution of the Ricci flow, i.e. 
\begin{eqnarray*}
\arrowvert\Rm(g_0(t))\arrowvert_{g_0(t)}+\sqrt{t}\arrowvert\nabla^{g_0(t)}\Rm(g_0(t))\arrowvert_{g_0(t)}\leq \frac{c}{1+t},
\end{eqnarray*}
 for $t\geq 0$, one has
\begin{eqnarray*}
\left(\partial_t-\Delta_{g_0(t)}\right)\left(\arrowvert h\arrowvert_{g_0(t)}^2+t\arrowvert\nabla^{g_0(t)} h\arrowvert_{g_0(t)}^2\right)&\leq&\frac{c}{1+t}\left(\arrowvert h\arrowvert_{g_0(t)}^2+t\arrowvert\nabla^{g_0(t)} h\arrowvert_{g_0(t)}^2\right).
\end{eqnarray*}
In particular,
\begin{eqnarray}\label{a-priori-grad-linear-est}
\left(\partial_t-\Delta_{g_0(t)}\right)\left((1+t)^{-c}\left(\arrowvert h\arrowvert_{g_0(t)}^2+t\arrowvert\nabla^{g_0(t)} h\arrowvert_{g_0(t)}^2\right)\right)\leq 0,\quad t\geq 0.
\end{eqnarray}

To end the argument, we need to prove a mean value inequality in the spirit of Zhang \cite{Zha-Qi-Gra-Est} in a Ricci flow setting. The proof is very close to his, but we reproduce it here for the convenience of the reader and for the consistency of the paper. We must be careful about the time dependence of the constants involved in this mean value inequality.

\begin{prop}[$L^1$ mean value inequality]\label{L^1-mean-inequ}
Let $(M^n,g_0(t))_{t\geq 0}$ be a Type III solution of the Ricci flow with nonnegative Ricci curvature which is noncollapsed, i.e. 
\begin{eqnarray*}
(1+t)\arrowvert\Rm(g_0(t))\arrowvert_{g_0(t)}\leq R_0,\quad\Ric(g_0(t))\geq 0,\quad \AVR(g_0(t))\geq V_0>0,\quad  t\geq0. 
\end{eqnarray*}

 Then any nonnegative subsolution $u$ of the heat equation, i.e.
\begin{eqnarray*}
\partial_tu\leq \Delta_{g_0(t)}u,\quad \mbox{on $M\times (0,+\infty)$,}
\end{eqnarray*}
satisfies, for $r^2< t$, $\theta\in(0,1)$,
\begin{eqnarray*}
\sup_{P\left(x,t,\theta r\right)}u\leq \frac{c(n,V_0,R_0,\theta)}{r^{n+2}}\int_{P(x,t,r)}ud\mu_{g_0(s)}ds,
\end{eqnarray*}
where $P(x,t,r):=\{(y,s)\in M\times[0,+\infty)\quad|\quad s\in (t-r^2,t],\quad y\in B_{g_0(s)}(x,r)\}.$
\end{prop}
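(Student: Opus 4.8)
The statement is a parabolic $L^1$ mean value inequality for nonnegative subsolutions of the (time-dependent) heat equation along a Type III Ricci flow with nonnegative Ricci and positive AVR. The standard route — which is essentially Li--Schoen combined with Zhang's adaptation to the Ricci flow setting — is a two-step argument: first prove an $L^2$ (energy) mean value inequality by Moser iteration, then bootstrap from $L^2$ down to $L^1$ via a clever iteration of parabolic cylinders of geometrically shrinking radii. Throughout, the key geometric inputs one must track are: (i) the Sobolev inequality on $(M,g_0(t))$ with a constant uniform in $t$, which follows from $\Ric(g_0(t))\geq 0$ together with $\AVR(g_0(t))\geq V_0>0$ (the Euclidean-type Sobolev inequality on manifolds with nonnegative Ricci and maximal volume growth, after Croke/Ni-type arguments); (ii) volume doubling and Bishop--Gromov, again uniform in $t$ since $\Ric(g_0(t))\ge 0$ and $\AVR(g_0(t))\ge V_0$; and (iii) the distortion estimates for the distance function $d_{g_0(t)}(x,\cdot)$ under the flow, i.e. $|\partial_t d_{g_0(t)}|\lesssim (1+t)^{-1/2}$-type bounds coming from the Type III curvature decay (this is presumably lemma \ref{C^0-distance-est-Type-III} in the paper), so that the cutoff functions $\psi(d_{g_0(s)}(x,y)/r)$ have controlled space \emph{and} time derivatives.

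\textbf{Step 1 (energy inequality and $L^2$ mean value inequality).} First I would fix the base cylinder $P(x,t,r)$, assume WLOG $r=1$ by the parabolic scaling $g_{0,R}(t):=R^{-2}g_0(R^2t)$ under which both the class of Type III flows and all the hypotheses are invariant, and work on $P(x,t,1)$. Multiply $\partial_s u\leq \Delta_{g_0(s)} u$ by $u^{p-1}\psi^2$ for the cutoff $\psi=\psi(d_{g_0(s)}(x,y)/\rho)$ supported in a slightly larger cylinder, integrate by parts in space, and use the evolution $\partial_s d\mu_{g_0(s)}=-\R_{g_0(s)}d\mu_{g_0(s)}$ together with $\R_{g_0(s)}\geq 0$ (so the measure is non-increasing, which helps on the left) and the bound on $\partial_s\psi$ to absorb error terms. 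This yields the Caccioppoli/energy estimate controlling $\sup_s\int \psi^2 u^p\,d\mu + \int\!\!\int|\nabla(\psi u^{p/2})|^2$ by $\int\!\!\int(|\nabla\psi|^2+|\partial_s\psi^2|)u^p$. Feeding this into the uniform Sobolev inequality and iterating $p\to p\cdot\frac{n+2}{n}$ over a sequence of nested cylinders of radii $\rho_k\downarrow\theta r$ (Moser iteration, as in Zhang \cite{Zha-Qi-Gra-Est} and the classical references in \cite{Gri-Hea-Ker-Boo}) gives
\begin{eqnarray*}
\sup_{P(x,t,\theta r)} u \;\leq\; \frac{c(n,V_0,R_0,\theta)}{r^{n+2}}\left(\int_{P(x,t,r)} u^2\,d\mu_{g_0(s)}\,ds\right)^{1/2}\cdot r^{\,(n+2)/2}.
\end{eqnarray*}
Here the uniformity in time of $c$ is exactly where one cashes in that the Sobolev, doubling, and distance-distortion constants are time-independent (they depend only on $n,V_0,R_0$).

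\textbf{Step 2 ($L^2 \to L^1$ by iteration).} The passage from the $L^2$-average to the $L^1$-average is the genuinely delicate step, and the one I expect to be the main obstacle. The trick (Li--Schoen, reproduced by Zhang) is: on a cylinder of radius $\rho$ write $\int u^2 \leq \big(\sup_{P(\cdot,\rho)} u\big)\int u$, apply the $L^2\to L^\infty$ bound of Step 1 on a \emph{smaller} cylinder to estimate $\sup_{P(x,t,\theta'\rho)}u$ by $\rho^{-(n+2)/2}(\int_{P(x,t,\rho)}u^2)^{1/2}$, and then iterate this over radii $\rho_j = (\theta + (1-\theta)2^{-j})r$. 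At each stage one gains a factor from the shrinking radius and loses a harmless geometric constant; choosing the exponents so the product telescopes, the $\sup$ on the right is reabsorbed and one is left with
\begin{eqnarray*}
\sup_{P(x,t,\theta r)} u \leq \frac{c(n,V_0,R_0,\theta)}{r^{n+2}}\int_{P(x,t,r)} u\,d\mu_{g_0(s)}\,ds,
\end{eqnarray*}
which is the claim. The points requiring care are: making sure every cylinder $P(x,t,\rho_j)$ with $\rho_j^2<t$ stays inside the region where the flow and estimates are valid (hence the hypothesis $r^2<t$); checking that the constant produced by the telescoping series genuinely depends only on $n,V_0,R_0,\theta$ and not on $t$ or $x$; and handling the time-dependence of the balls $B_{g_0(s)}(x,\rho)$ consistently, for which one uses that $d_{g_0(s)}$ and $d_{g_0(s')}$ are uniformly comparable on the relevant time interval by lemma \ref{C^0-distance-est-Type-III}. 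A standard approximation argument (replace $u$ by $u+\delta$, or note $u$ may be assumed bounded on the compact cylinder after a cutoff in time) takes care of the a priori finiteness of the quantities iterated.
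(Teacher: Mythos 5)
Your proposal is correct and follows essentially the same route as the paper: a Caccioppoli estimate using $\partial_s d\mu_{g_0(s)}=-\R_{g_0(s)}d\mu_{g_0(s)}\le 0$ and the distance-distortion lemma for the cutoffs, the uniform Euclidean Sobolev inequality from $\Ric\ge 0$ plus the AVR lower bound, Moser iteration with exponent $\alpha_n=1+2/n$ over shrinking cylinders to get the $L^2\to L^\infty$ bound, and then the Li--Schoen telescoping iteration $U_k\le C_k\bigl(\int u\bigr)U_{k+1}^{1/2}$ over radii accumulating at $\theta r$ to pass from $L^2$ to $L^1$. The only cosmetic difference is your initial parabolic rescaling to $r=1$, which the paper does not perform but which is legitimate since all hypotheses are scale-invariant.
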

\begin{proof}[Proof of proposition \ref{L^1-mean-inequ}]
Let $p\in [1,+\infty)$. Then, 
\begin{eqnarray}\label{sub-sol-power}
\partial_tu^p\leq \Delta_{g_0(t)}u^p,
\end{eqnarray}
 on $M\times (0,+\infty)$. Take any smooth space-time cutoff function $\psi$ and multiply (\ref{sub-sol-power}) by $\psi^2u^p$ and integrate by parts as follows : 
 \begin{eqnarray*}
&&-\int_{t- r^2}^{t'}\int_M\psi^2u^p\Delta_{g_0(s)}u^pd\mu_{g_0(s)}ds\leq -\int_{t-r^2}^{t'}\int_M\psi^2u^p\partial_su^pd\mu_{g_0(s)}ds,\\
&&\int_{t- r^2}^{t'}\int_M\arrowvert\nabla^{g_0(s)}(\psi u^p)\arrowvert^2_{g_0(s)}-\arrowvert\nabla^{g_0(s)}\psi\arrowvert^2_{g_0(s)}u^{2p} d\mu_{g_0(s)}ds\leq\\
&& \int_{t-r^2}^{t'}\int_M\frac{\partial_s(\psi^2\ln d\mu_{g_0(s)})}{2}u^{2p}d\mu_{g_0(s)}ds-\frac{1}{2}\int_M\psi^2u^{2p}d\mu_{g_0(t)},
\end{eqnarray*}
for any $t'\in(t-r^2,t].$
 Since the scalar curvature is nonnegative, $\partial_s(\ln d\mu_s)=-\R_{g_0(s)}\leq 0$, for $s\geq 0$. Hence,
\begin{eqnarray*}
&&\int_{t-r^2}^{t'}\int_M\arrowvert\nabla^{g_0(s)}(\psi u^p)\arrowvert^2_{g_0(s)}d\mu_{g_0(s)}ds+\frac{1}{2}\int_M\psi^2u^{2p}d\mu_{g_0(t)}\leq\\
&&\int_{t-r^2}^{t'}\int_M\left(\frac{\partial_s\psi^2}{2}+\arrowvert\nabla^{g_0(s)}\psi\arrowvert_{g_0(s)}^2\right)u^{2p}d\mu_{g_0(s)}ds.
\end{eqnarray*}
Let $\tau, \sigma\in (0,+\infty)$ such that $\tau+\sigma<r$. In particular, $P(x, \tau,\sigma)\subset P(x,t,\tau+\sigma)\subset P(x,t,r)$.
Now, choose two smooth functions $\phi:\mathbb{R}_+\rightarrow[0,1]$ and $\eta:\mathbb{R}_+\rightarrow[0,1]$ such that
\begin{eqnarray*}
&& \supp(\phi)\subset [\tau,\tau+\sigma],\quad\phi\equiv 1\quad\mbox{in $[0,\tau]$},\quad \phi\equiv 0\quad\mbox{in $[\tau+\sigma,+\infty)$},\quad -c/\sigma\leq \phi'\leq 0,\\
&& \supp(\eta)\subset [t-(\tau+\sigma)^2,+\infty),\quad\eta\equiv 1\quad\mbox{in $[t-\tau^2,+\infty)$},\\
&& \eta\equiv 0\quad\mbox{in $(t-r^2,t-(\tau+\sigma)^2]$},\quad 0\leq \eta'\leq c/\sigma^2.
\end{eqnarray*}
Define $\psi(y,s):=\phi(d_{g_0(s)}(x,y))\eta(s)$, for $(y,s)\in M\times(0,+\infty)$. Then, thanks to lemma \ref{C^0-distance-est-Type-III},
\begin{eqnarray*}
&&\arrowvert\nabla^{g_0(s)}\psi\arrowvert_{g_0(s)}\leq\frac{c(R_0)}{\sigma},\quad \arrowvert\partial_s\psi\arrowvert\leq \frac{c}{\sigma^2}+\frac{c(R_0)}{\sigma{\sqrt{t-(\tau+\sigma)^2}}},
\end{eqnarray*}
for some uniform positive constant $c(R_0)$.
On the other hand, $(M^n,g_0(s))_{s\geq 0}$ satisfies the following Euclidean Sobolev inequality [Chap.$3$, \cite{Sal-Cos-Sob-Boo}] since it has nonnegative Ricci curvature and it is non collapsed :
\begin{eqnarray*}
\left(\int_M(\psi u^p)^{\frac{2n}{n-2}}d\mu_{g_0(s)}\right)^{\frac{n-2}{n}}\leq c(n,V_0)\int_M\arrowvert\nabla^{g_0(s)}(\psi u^p)\arrowvert_{g_0(s)}^2d\mu_{g_0(s)},
\end{eqnarray*}
for any $s\geq 0$.
We could have used a local Sobolev inequality which would amount to the same by using the non collapsing assumption.

Now, by Hölder inequality, for $s\geq 0$,
\begin{eqnarray*}
\int_M (\psi u^p)^{2+\frac{4}{n}}d\mu_{g_0(s)}\leq\left(\int_M (\psi u^p)^{\frac{2n}{n-2}}d\mu_{g_0(s)}\right)^{\frac{n-2}{n}}\left(\int_M (\psi u^p)^2d\mu_{g_0(s)}\right)^{\frac{2}{n}}.
\end{eqnarray*}
Therefore, to sum it up, if $\alpha_n:=1+2/n$,
\begin{eqnarray*}
\int_{P(x,t,\tau)}\left(u^{2p}\right)^{\alpha_n}d\mu_{g_0(s)}ds&\leq&\int_{P(x,t,r)}\left(\psi u^p\right)^{2\alpha_n}d\mu_{g_0(s)}ds\\
&\leq&\int_{t-r^2}^t\left(\int_M (\psi u^p)^{\frac{2n}{n-2}}d\mu_{g_0(s)}\right)^{\frac{n-2}{n}}\left(\int_M (\psi u^p)^2d\mu_{g_0(s)}\right)^{\frac{2}{n}}ds\\
&\leq&c(n,V_0,R_0)\sup_{s\in(t-r^2,t]}\left(\int_M (\psi u^p)^2d\mu_{g_0(s)}\right)^{\frac{2}{n}}\int_{P(x,t,r)}\arrowvert\nabla^{g_0(s)}(\psi u^p)\arrowvert_{g_0(s)}^2d\mu_{g_0(s)}\\
&\leq&c(n,V_0,R_0)\left(\frac{1}{\sigma^2}+\frac{1}{\sigma{\sqrt{t-(\tau+\sigma)^2}}}\right)\left(\int_{P(x,t,\tau+\sigma)}u^{2p}d\mu_{g_0(s)}\right)^{\alpha_n}.
\end{eqnarray*}

Define the following sequences : 
\begin{eqnarray*}
p_i:=\alpha_n^i,\quad \sigma_i:=2^{-1-i}(1-\theta)r,\quad \tau_{-1}:=r,\quad\tau_i:=r-\sum_{j=0}^i\sigma_j,\quad i\geq 0.
\end{eqnarray*}
Then, $\lim_{i\rightarrow +\infty}\tau_i=\theta r$ and, for any $i\geq 0$,
\begin{eqnarray*}
\| u^2\|_{L^{p_{i+1}}(P(x,t,\tau_i))}\leq\left(c(n,V_0,R_0)\left(\frac{1}{\sigma_i^2}+\frac{1}{\sigma_i{\sqrt{t-\tau_{i-1}^2}}}\right)\right)^{\frac{1}{\alpha_n}}\| u^2\|_{L^{p_i}(P(x,t,\tau_{i-1}))},
\end{eqnarray*}
i.e.
\begin{eqnarray*}
\| u\|^2_{L^{\infty}(P(x,t,\theta r))}\leq\Pi_{i=0}^{\infty}\left(c(n,V_0,R_0)\left(\frac{1}{\sigma_i^2}+\frac{1}{\sigma_i{\sqrt{t-\tau_{i-1}^2}}}\right)\right)^{\frac{1}{\alpha_n}}\| u\|^2_{L^{2}(P(x,t,r))}.
\end{eqnarray*}
It remains to estimate the previous infinite product. It turns out that 
\begin{eqnarray*}
&&\sqrt{t-\tau_{i-1}^2}\geq\sqrt{t-(\theta r)^2}\geq c(\theta)\sigma_0\geq c(\theta)\sigma_i,\quad i\geq 0,\\
&& c(\theta)= 2\left(\frac{1+\theta}{1-\theta}\right)^{1/2}.
\end{eqnarray*}
Therefore,
\begin{eqnarray*}
\Pi_{i=0}^{\infty}\left(\frac{1}{\sigma_i^2}+\frac{1}{\sigma_i{\sqrt{t-\tau_{i-1}^2}}}\right)^{\frac{1}{\alpha_n}}&\leq& \max\left\{1,\frac{1}{c(\theta)}\right\}^{1+n/2}\Pi_{i=0}^{+\infty}\left(4^{i+1}(1-\theta)^{-2}r^{-2}\right)^{\alpha_n^{-i}}\\
&\leq&c(n)\max\left\{1,\frac{1}{c(\theta)}\right\}^{1+n/2}\frac{1}{((1-\theta)r)^{n+2}}\\
&\leq&c'(n)\frac{1}{((1-\theta)r)^{n+2}},
\end{eqnarray*}
i.e.
\begin{eqnarray}\label{L^2-bound-mean-value}
\sup_{P(x,t,\theta r)}u^2\leq \frac{c(n,V_0,R_0)}{((1-\theta)r)^{n+2}}\int_{P(x,t,r)}u^2d\mu_{g_0(s)}ds.
\end{eqnarray}

To get a bound depending on the $L^1$ norm of $u$, we proceed as in \cite{Li-Sch-Mea-Val}. Define $U_k:=\sup_{P(x,t,r_k)}u^2$, where $$r_k:=r(1-\theta)\sum_{i=0}^{k}\theta^{i+1},$$ for $k\geq 0$ . Then, by (\ref{L^2-bound-mean-value}), one has, for any $k\geq 0$,
\begin{eqnarray*}
U_k\leq\left(\frac{c(n,V_0,R_0)\theta^{-k-1}}{r-\theta r}\right)^{n+2}\left(\int_{P(x,t,r/2)}ud\mu_{g_0(s)}ds\right)(U_{k+1})^{1/2}.
\end{eqnarray*}
Hence the result.

\end{proof}

We end the proof of the $L^{n+4}$ estimate as follows. By applying lemma \ref{L^1-mean-inequ} for $t:=R^2$, $r:=(3/4)^{1/2}R$, and $\theta:=(2/3)^{1/2}$ for some positive radius $R$ to the subsolution $(1+t)^{-c}(\arrowvert h\arrowvert_{g_0(t)}^2+t\arrowvert\nabla^{g_0(t)}h\arrowvert^2_{g_0(t)})$ (see (\ref{a-priori-grad-linear-est})), one has : 
\begin{eqnarray*}
&&P(x,t,\theta r)=P(x,R)\setminus P(x,R/\sqrt{2}),\\
&&\sup_{P(x,t,\theta r)}(1+s)^{-c}(\arrowvert h\arrowvert_{g_0(s)}^2+s\arrowvert\nabla^{g_0(s)}h\arrowvert^2_{g_0(s)})\leq \\
&& c(n,V_0,R_0,\theta)\fint_{P(x,t,r)} (1+s)^{-c}(\arrowvert h\arrowvert_{g_0(s)}^2+s\arrowvert\nabla^{g_0(s)}h\arrowvert^2_{g_0(s)})d\mu_{g_0(s)}ds.
\end{eqnarray*}
Now, by the $L^{\infty}$ bound together with the $L^2$ bound, one has 
\begin{eqnarray*}
\sup_{P(x,t,\theta r)}(\arrowvert h\arrowvert_{g_0(s)}^2+s\arrowvert\nabla^{g_0(s)}h\arrowvert^2_{g_0(s)})\leq c(n,V_0,R_0,\theta)\arrowvert h_0\arrowvert^2_{L^{\infty}(M,g_0)}.
\end{eqnarray*}

Hence we get (\ref{grad-est-resc}), hence the $L^{n+4}$ bound.

\end{itemize}
\end{proof}

\begin{rk}
The $L^{\infty}$ estimate of Theorem \ref{Est-Hom-lin-equ} shows that the operator $h_s\in L^{\infty}\rightarrow h_t\in L^{\infty}$ for $t>s\geq 0$ where $(h_t)_{t>s}$ is a solution to the homogeneous Licherowicz heat equation is contractive for the scalar product $H(t):=g(t)+2(1+t)\Ric(g(t))$. 
\end{rk}
We close this section by estimating the heat kernel associated to the Lichnerowicz operator in terms of the heat kernel associated to the laplacian acting on functions :
\begin{theo}\label{comp-heat-ker}
Let $(M^n,g_0(t))_{t\geq 0}$, $n\geq 3$, be an expanding gradient Ricci soliton with positive curvature operator. Let $(h(t))_{t\geq 0}$ be a solution to the homogeneous linear equation $\partial_th=L_th$ with $h_0=h(0)\in L^{\infty}(S^2T^*M,g_0)$. Let $(u(t))_{t\geq 0}$ be a solution to the homogeneous linear equation $\partial_tu=\Delta_{g_0(t)}u+\R_{g_0(t)}u$ with $u(0)=\arrowvert h_0\arrowvert_{g_0}\in L^{\infty}(M,g_0)$. Then, 
\begin{eqnarray*}
-u(t)g_0(t)\leq h(t)\leq u(t)g_0(t),\quad  t\geq 0,
\end{eqnarray*}
in the sense of quadratic forms.
\end{theo}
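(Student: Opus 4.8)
The plan is to run a tensor maximum principle on the two symmetric $2$-tensors $w_\pm(t):=u(t)g_0(t)\pm h(t)$ and show that each of them stays nonnegative as a quadratic form for all $t\geq0$; unravelling the definitions, this is precisely the two-sided bound in the statement. First I would record that $u$ is a bona fide nonnegative function: the expander is of Type III, so $\R_{g_0(t)}$ is bounded, the scalar equation $\partial_tu=\Delta_{g_0(t)}u+\R_{g_0(t)}u$ with bounded datum $u(0)=|h_0|_{g_0}\geq0$ is solvable with $u$ bounded on every slab $M\times[0,T]$, and $u\geq0$ by the scalar maximum principle. At $t=0$, pointwise, $w_\pm(0)=|h_0|_{g_0}g_0\pm h_0\geq0$ because the eigenvalues of $h_0$ at each point lie in $[-|h_0|_{g_0},|h_0|_{g_0}]$.

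The computational core is the identity $L_tg_0(t)=0$: $g_0(t)$ is parallel for its own connection, so $\Delta_{g_0(t)}g_0(t)=0$, while with the conventions of Section~\ref{sec-equ-flo} one has $2\Rm(g_0(t))\ast g_0(t)=2\Ric(g_0(t))=\Ric(g_0(t))\otimes g_0(t)+g_0(t)\otimes\Ric(g_0(t))$. Using this together with $\partial_tg_0(t)=-2\Ric(g_0(t))$, the parallelism of $g_0(t)$, and the equation for $u$, a direct computation gives
\begin{eqnarray*}
(\partial_t-L_t)\big(u\,g_0(t)\big)&=&\big(\Delta_{g_0(t)}u+\R_{g_0(t)}u\big)g_0(t)-2u\,\Ric(g_0(t))-\big(\Delta_{g_0(t)}u\big)g_0(t)\\
&=&u\,\big(\R_{g_0(t)}g_0(t)-2\Ric(g_0(t))\big).
\end{eqnarray*}
This is the only place positivity of the curvature operator enters the reaction term: $\Rm(g_0(t))>0$ forces all sectional curvatures to be positive, and then for $n\geq3$ every eigenvalue of $\Ric(g_0(t))$ is at most $\R_{g_0(t)}/2$ (in a common eigenbasis of $g_0(t)$ and $\Ric(g_0(t))$, $\R-2\Ric_{ii}$ is a sum of sectional curvatures). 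Hence $\R_{g_0(t)}g_0(t)-2\Ric(g_0(t))\geq0$, so, since $u\geq0$ and $(\partial_t-L_t)h=0$,
$$(\partial_t-L_t)w_\pm=u\,\big(\R_{g_0(t)}g_0(t)-2\Ric(g_0(t))\big)\geq0$$
as quadratic forms: $w_\pm$ are tensor supersolutions of the Lichnerowicz heat equation with nonnegative initial data.

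To conclude $w_\pm\geq0$ I would reproduce the noncompact maximum principle from the proof of Theorem~\ref{Est-Hom-lin-equ}. Fix $T>0$; with $H(t)=g_0(t)+2(1+t)\Ric(g_0(t))>0$ the solution of $\partial_tH=L_tH$ and $\phi$ the exhaustion function~(\ref{function-phi-ad-hoc}), set $w_{\pm,\epsilon}:=w_\pm+\epsilon e^{Ct}\phi(t)H(t)$ with $C=C(T)$ large. Since $\phi$ is proper, $H\geq g_0(t)$, and $w_\pm$ is bounded on $M\times[0,T]$, a negative eigenvalue of $w_{\pm,\epsilon}$ on $M\times[0,T]$ would occur at an interior point $(x_0,t_0)$, $t_0>0$; extending a null eigenvector $U$ of $w_{\pm,\epsilon}(x_0,t_0)$ by parallel transport in space as in [Theorem~$3.3$, Chap.~$2$, \cite{Ben}], one obtains at $(x_0,t_0)$
$$0\geq\partial_t\big(w_{\pm,\epsilon}(U,U)\big)>\Delta_{g_0(t_0)}\big(w_{\pm,\epsilon}(U,U)\big)+\big(2\Rm(g_0(t_0))\ast w_{\pm,\epsilon}\big)(U,U)\geq0,$$
a contradiction: the strict inequality comes from the $\epsilon$-term and $(\partial_t-L_t)w_\pm\geq0$; the two Ricci terms of $L_{t_0}$ vanish because $w_{\pm,\epsilon}(x_0,t_0)U=0$; the Laplacian term is $\geq0$ at an interior spatial minimum; and $\big(2\Rm\ast w_{\pm,\epsilon}\big)(U,U)\geq0$ by positivity of the curvature operator. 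Letting $\epsilon\to0$ and then $T\to\infty$ gives $w_\pm\geq0$ on $M\times\mathbb{R}_+$, as desired. The hard part is the localization of this last maximum principle: one must be sure the barrier $\epsilon e^{Ct}\phi H$ really dominates $w_\pm$ at spatial infinity on each finite slab, which means controlling $u$ (equivalently $|h|_{g_0}$) and, ultimately, the scalar curvature of the expander at infinity — everything else is the algebraic identity above together with the positivity of the curvature operator already exploited in Theorem~\ref{Est-Hom-lin-equ}.
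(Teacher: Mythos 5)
Your proof is correct and takes essentially the same route as the paper: the paper sets $\bar h:=h+u\,g_0(t)$ (your $w_+$), computes $(\partial_t-L_t)\bar h=u\bigl(\R_{g_0(t)}g_0(t)-2\Ric(g_0(t))\bigr)\geq 0$ via exactly the inequality $2\Ric\leq\R\,g$ that you justify through positive sectional curvature, and then invokes the barrier/maximum-principle argument of Theorem \ref{Est-Hom-lin-equ}. The additional details you supply (boundedness of $u$ on slabs, the null-eigenvector computation at the first touching point) merely flesh out what the paper leaves implicit.
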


\begin{proof}
The proof of theorem \ref{comp-heat-ker} mimics the proof of the $L^{\infty}$ estimate of theorem \ref{Est-Hom-lin-equ}. Indeed, consider the following symmetric $2$-tensor 
\begin{eqnarray*}
\bar{h}(t)&:=&h(t)+u(t)g(t),\quad t\geq 0.\\
\end{eqnarray*}
Then, one computes the evolution of $(\bar{h}(t))_{t\geq 0}$ as follows :
\begin{eqnarray*}
(\partial_t-L_t)\bar{h}&=&\R_{g_0(t)}u(t)g_0(t)-2u(t)\Ric(g_0(t))\\
&\geq&\R_{g_0(t)}u(t)g_0(t)-u(t)\R_{g_0(t)}g_0(t)\\
&\geq&0,
\end{eqnarray*}
since $g_0(t)$ has positive curvature operator. Now, by using a barrier function as in the proof of the $L^{\infty}$ estimate of theorem \ref{Est-Hom-lin-equ}, the result follows by applying the maximum principle for symmetric $2$-tensors.

\end{proof}

\begin{rk}
The proof of theorem \ref{comp-heat-ker} does not use the expanding structure in an essential way.
\end{rk}

\section{Heat kernel estimates}\label{sec-hea-ker-est}

\subsection{Estimates of the heat kernel acting on functions}
We follow the same strategy adopted by Grigor'yan and Zhang.

Let $(M^n,g(t))_{t\in[0,T)}$ be a complete Ricci flow with bounded curvature for each time slice. We consider the scalar heat equation with potential coupled with this Ricci flow : 
\begin{equation}
\label{scal-heat-equ}
\left\{
\begin{array}{rl}
&\partial_tu=\Delta_{g(t)}u+\R_{g(t)}u,\\
&\\
& \partial_tg=-2\Ric(g(t)),
\end{array}
\right.
\end{equation}
on $M^n\times(0,T)$.
We also consider its conjugate heat equation : 

\begin{equation}
\label{scal-heat-equ-conj}
\left\{
\begin{array}{rl}
&\partial_{\tau}u=\Delta_{g(\tau)}u,\\
&\\
&\partial_{\tau}g=2\Ric(g(\tau)),\\
\end{array}
\right.
\end{equation}
on $M\times (0,t]$ where $\tau(s):=t-s$ for some fixed positive time $t$.

We denote the heat kernel associated to (\ref{scal-heat-equ}) by $K(x,t,y,s)$, for $0\leq s<t<T$, and $x,y\in M$. This heat kernel always exists and is positive : see \cite{Gue-Fun-Sol}. It satisfies by definition, for any fixed $(y,s)\in M\times[0,T)$,

\begin{equation}
\label{scal-heat-equ-ker}
\left\{
\begin{array}{rl}
&\partial_tK(\cdot,\cdot,y,s)=\Delta_{g(t)}K(\cdot,\cdot,y,s)+\R_{g(t)}K(\cdot,\cdot,y,s),\\
&\\
&\partial_tg=-2\Ric(g(t)),\\
&\\
&\lim_{t\rightarrow s}K(\cdot,t,y,s)=\delta_y.\\
\end{array}
\right.
\end{equation}

On the other hand, if $(x,t)\in M\times(0,T)$ is fixed, then $K(x,t,\cdot,\cdot)$ is the heat kernel associated to the conjugate backward heat equation : 
\begin{equation}
\label{scal-heat-equ-conj-ker}
\left\{
\begin{array}{rl}
&\partial_sK(x,t,\cdot,\cdot)=-\Delta_{g(s)}K(x,t,\cdot,\cdot),\\
&\\
&\partial_sg=-2\Ric(g(s)),\\
&\\
&\lim_{s\rightarrow t}K(x,t,\cdot,s)=\delta_x.
\end{array}
\right.
\end{equation}

Or equivalently, if $\tau(s):=t-s$, $(y,\tau)\rightarrow K(x,t,y,\tau)$ satisfies the following forward heat equation : 

\begin{equation}
\label{scal-heat-equ-conj-bis-ker}
\left\{
\begin{array}{rl}
&\partial_{\tau}K(x,t,\cdot,\cdot)=\Delta_{g(\tau)}K(x,t,\cdot,\cdot),\\
&\\
&\partial_{\tau}g=2\Ric(g(\tau)),\\
&\\
&\lim_{\tau\rightarrow 0}K(x,t,\cdot,\tau)=\delta_x.
\end{array}
\right.
\end{equation}

\begin{prop}\label{L^1-bound-heat-kernel-fct}[$L^1$-bound]
Let $(M^n,g_0(t))_{t\geq 0}$ be a Type III solution to the Ricci flow with nonnegative scalar curvature. Then, 
\begin{eqnarray}
&&\int_MK(x,t,y,s)d\mu_{g_0(t)}(x)=1,\quad s<t,\quad y\in M,\label{L^1-est-fct}\\
 &&1\leq\int_MK(x,t,y,s)d\mu_{g_0(s)}(y)\leq\left(\frac{1+t}{1+s}\right)^c,\quad s<t,\quad x\in M,\label{L^infty-est-fct}
\end{eqnarray}
for some positive constant $c$.
\end{prop}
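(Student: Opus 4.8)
The identity \eqref{L^1-est-fct} is the standard conservation of mass for the heat equation with potential $\R_{g_0(t)}$ coupled to the Ricci flow: since $\partial_t d\mu_{g_0(t)} = -\R_{g_0(t)}d\mu_{g_0(t)}$, differentiating $\int_M K(x,t,y,s)\diff\mu_{g_0(t)}(x)$ in $t$ and using the kernel equation \eqref{scal-heat-equ-ker} gives $\partial_t\int_M K\diff\mu_{g_0(t)} = \int_M(\Delta_{g_0(t)}K + \R_{g_0(t)}K - \R_{g_0(t)}K)\diff\mu_{g_0(t)} = \int_M\Delta_{g_0(t)}K\diff\mu_{g_0(t)} = 0$, where the last step is justified because $(M^n,g_0(t))$ has bounded curvature on each time slice and Euclidean volume growth (being a Type III Ricci flow with $\Ric\geq 0$ and positive asymptotic volume ratio), so Karp-type cutoff arguments apply and no boundary term at infinity arises. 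Combined with the normalization $\lim_{t\to s}K(\cdot,t,y,s) = \delta_y$, this yields $\int_M K(x,t,y,s)\diff\mu_{g_0(t)}(x) = 1$ for all $s < t$.

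For \eqref{L^infty-est-fct}, I would work with the conjugate picture \eqref{scal-heat-equ-conj-bis-ker}: setting $\tau = t - s$ and $v(y,\tau) := K(x,t,y,\tau)$, the function $v$ solves $\partial_\tau v = \Delta_{g_0(\tau)}v$ along the backward flow $\partial_\tau g = 2\Ric(g_0(\tau))$, which now \emph{increases} volume, $\partial_\tau d\mu = \R\diff\mu \geq 0$. Thus $Q(\tau) := \int_M v(y,\tau)\diff\mu_{g_0(\tau)}(y)$ satisfies $Q'(\tau) = \int_M(\Delta_{g_0(\tau)}v + \R_{g_0(\tau)}v)\diff\mu_{g_0(\tau)} = \int_M \R_{g_0(\tau)}v\diff\mu_{g_0(\tau)} \geq 0$, where again I must justify that $\int_M\Delta_{g_0(\tau)}v\diff\mu = 0$ by a cutoff argument using the parabolic geometry; since $Q(0) = 1$ (from $v(\cdot,0) = \delta_x$), this immediately gives the lower bound $\int_M K(x,t,y,s)\diff\mu_{g_0(s)}(y) = Q(t-s) \geq 1$.

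The upper bound requires controlling the growth rate of $Q$. Using the Type III hypothesis, $\R_{g_0(\tau)} \leq c(n)\arrowvert\Rm(g_0(\tau))\arrowvert_{g_0(\tau)} \leq c/(1+\tau_{\mathrm{orig}})$ where $\tau_{\mathrm{orig}} = s$ is the original forward time corresponding to $\tau$; more precisely, along the backward flow parametrized by $\tau \in (0, t-s]$ the original time is $t - \tau$ running from $t$ down to $s$, and the scalar curvature bound reads $\R \leq c/(1 + (t-\tau))$. Then $Q'(\tau) \leq \frac{c}{1 + t - \tau}Q(\tau)$, and Gr\"onwall gives $Q(t-s) \leq Q(0)\exp\left(\int_0^{t-s}\frac{c}{1+t-\tau}\diff\tau\right) = \exp\left(c\ln\frac{1+t}{1+s}\right) = \left(\frac{1+t}{1+s}\right)^c$, as claimed. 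The main obstacle I anticipate is not the Gr\"onwall step but the rigorous justification that the Laplacian integrates to zero (no flux at infinity) in both the forward and backward settings; this is where the Type III bound, the nonnegativity of $\Ric$, and the resulting controlled (Euclidean) volume growth of geodesic balls are used, so that an exhaustion by cutoff functions $\psi_R$ with $\arrowvert\nabla\psi_R\arrowvert \lesssim 1/R$ kills the error terms as $R \to \infty$ using pointwise Gaussian-type decay of $K$ (which holds for fixed time slices by \cite{Gue-Fun-Sol} since the curvature is bounded there).
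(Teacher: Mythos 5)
Your proposal is correct and follows essentially the same route as the paper: conservation of mass for the forward equation, and monotonicity plus Gr\"onwall with the Type III decay $\R_{g_0(t-\tau)}\le c/(1+t-\tau)$ for the conjugate equation, yielding exactly the exponent $c=\sup_t(1+t)\R_{g_0(t)}$. The only (minor) difference is in justifying the vanishing of the flux at infinity: the paper uses a Dirichlet exhaustion for the upper bounds and cutoff functions for the lower bounds, needing only bounded curvature per time slice, whereas you invoke $\Ric\ge 0$ and positive asymptotic volume ratio, which are not among the stated hypotheses of the proposition (though they do hold in the paper's application).
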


\begin{proof}

The proof is quite standard : let $(\Omega_j)_{j\geq 0}$ be an increasing sequence of domains of $M$ with smooth boundary exhausting the manifold $M$. Let $K_j(\cdot,\cdot,y,s)$ be the heat kernel associated to (\ref{scal-heat-equ}) on $\Omega_j$ with Dirichlet boundary condition. Then, one can prove that $K_j(\cdot,\cdot,y,s)$ is an increasing sequence converges locally uniformly to $K(\cdot,\cdot,y,s)$. By integrating by parts, one gets, for some fixed $(y,s)\in M\times [0,+\infty)$,
\begin{eqnarray*}
\partial_t\int_{\Omega_j}K_j(x,t,y,s)d\mu_{g_0(t)}(x)&=&\int_{\Omega_j}\Delta_{g_0(t)}K_j(x,t,y,s)d\mu_{g_0(t)}(x)\\
&=&\int_{\partial\Omega_j}\left<\nabla^{g_0(t)}_{x}K_j(x,t,y,s),\mathbf{n}\right>d\mu_{j,g_0(t)}(x)\\
&\leq&0,
\end{eqnarray*}
where $d\mu_{j,g_0(t)}$ is the induced measure on $\partial \Omega_j$ by $d\mu_{g_0(t)}$. Therefore, 
\begin{eqnarray*}
\int_{\Omega_j}K_j(x,t,y,s)d\mu_{g_0(t)}(x)\leq \lim_{t\rightarrow s}\int_{\Omega_j}K_j(x,t,y,s)d\mu_{g_0(t)}(x)=1,
\end{eqnarray*}
for any $t>s$ and any $j$ large enough so that $y\in \Omega_j$. By letting $j$ tending to $+\infty$, we get the first estimate. 

The second estimate is proved similarly. Remark that $K_j(x,t,\cdot,\cdot)$ satisfies (\ref{scal-heat-equ-conj}) with Dirichlet boundary condition. Therefore, 
\begin{eqnarray*}
\int_{\Omega_j}K_j(x,t,y,s)d\mu_{g_0(s)}(y)\leq \left(\frac{1+t}{1+s}\right)^c,\quad s<t,
\end{eqnarray*}
for any index $j$ large enough where $c:=\sup_{t\geq 0}(1+t)\R_{g_0(t)}$. This implies 
\begin{eqnarray*}
\int_M K(x,t,y,s)d\mu_{g_0(s)}(y)\geq \left(\frac{1+t}{1+s}\right)^c,\quad s<t.
\end{eqnarray*}
On the other hand, let $(\phi_k)_k$ be any sequence of smooth cut-off functions approximating the constant function $1$, then
\begin{eqnarray*}
\partial_t\int_M\phi_k(y)K(x,t,y,s)d\mu_{g_0(t)}(y)&=&-\int_M\Delta_{g_0(t)}\phi_k(y)K(x,t,y,s)d\mu_{g_0(t)}(y)\\
&\leq& C\sup_{\supp(\phi_k)}\arrowvert\Delta_{g_0(t)}\phi_k\arrowvert,
\end{eqnarray*}
which implies by letting $k$ tending to $+\infty$,
\begin{eqnarray*}
\partial_t\int_MK(x,t,y,s)d\mu_{g_0(t)}(y)\geq 0, 
\end{eqnarray*}
i.e. 
\begin{eqnarray*}
\int_MK(x,t,y,s)d\mu_{g_0(t)}(x)=1,\quad s<t,\quad y\in M.
\end{eqnarray*}
Similarly, one can prove the expected lower bound for $\int_MK(x,t,y,s)d\mu_{g_0(s)}(y)$.

\end{proof}

\begin{rk}\label{rk-L^1-L^infty}
In particular, proposition \ref{L^1-bound-heat-kernel-fct} tells us that the heat semigroup associated to $\Delta_{g_0(t)}+\R_{g_0(t)}$, where $(M^n,g_0(t))$ is an expanding gradient Ricci soliton with non negative scalar curvature, is $L^{1}$ contractive but not $L^{\infty}$ contractive unless it is flat.
\end{rk}

\begin{prop}[On diagonal upper bound : $L^2\rightarrow L^{\infty}$ bound]\label{diag-upper-heat-ker}
Let $(M^n,g_0(t))_{t\geq 0}$ be a non collapsed Type III solution of the Ricci flow with nonnegative Ricci curvature, i.e. 
\begin{eqnarray*}
(1+t)\arrowvert\Rm(g_0(t))\arrowvert_{g_0(t)}\leq R_0,\quad\Ric(g_0(t))\geq 0,\quad \AVR(g_0(t))\geq V_0>0,\quad  t\geq0. 
\end{eqnarray*}
Then,
\begin{eqnarray*}
0<K(x,t,y,s)\leq\frac{c(n,V_0,R_0)}{(t-s)^{\frac{n}{2}}},\quad 0\leq s<t,\quad x,y\in M.
\end{eqnarray*}
\end{prop}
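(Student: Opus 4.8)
The plan is to apply the $L^1$ mean value inequality of Proposition~\ref{L^1-mean-inequ} to the heat kernel itself, viewed as a solution in its first pair of variables, and to convert the resulting parabolic average into a power of $t-s$ via the mass normalisation $\int_M K(x,t,y,s)\,d\mu_{g_0(t)}(x)=1$ from Proposition~\ref{L^1-bound-heat-kernel-fct}. Positivity of $K$ for $t>s$ is already known (\cite{Gue-Fun-Sol}), so only the upper bound has to be established.

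The point to get around is that, fixing $(y,s)$ and setting $u(x,t):=K(x,t,y,s)$, the function $u$ solves $\partial_t u=\Delta_{g_0(t)}u+\R_{g_0(t)}u$ with $\R_{g_0(t)}\geq 0$, so it is a supersolution of the heat equation and Proposition~\ref{L^1-mean-inequ} does not apply to it directly. Here the Type III assumption enters: since $\R_{g_0(t)}\leq c(n)\arrowvert\Rm(g_0(t))\arrowvert_{g_0(t)}$, the quantity $c_0:=\sup_{t\geq 0}(1+t)\R_{g_0(t)}$ is finite and bounded in terms of $n$ and $R_0$, and the rescaled kernel $v(x,t):=(1+t)^{-c_0}u(x,t)$ satisfies
\begin{eqnarray*}
(\partial_t-\Delta_{g_0(t)})v=(1+t)^{-c_0}u\left(\R_{g_0(t)}-\frac{c_0}{1+t}\right)\leq 0,
\end{eqnarray*}
hence is a nonnegative subsolution to which Proposition~\ref{L^1-mean-inequ} does apply.

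I would then fix $x\in M$ and $0\leq s<t$, and apply Proposition~\ref{L^1-mean-inequ} to $v$ on $P(x,t,r)$ with $\theta=1/2$ and $r^2:=\tfrac12\min\{t-s,\,t/2\}$; this choice guarantees $r^2<t-s$ (so the cylinder stays away from the singular time $s$ and $u$ is smooth on its closure), $r^2\leq t/4$, and $r^2\geq (t-s)/4$. Evaluating the mean value inequality at the top point $(x,t)$, enlarging the space balls to $M$, and invoking $\int_M u(\cdot,s')\,d\mu_{g_0(s')}=1$ together with the monotonicity $(1+s')^{-c_0}\leq(1+t-r^2)^{-c_0}$ on $[t-r^2,t]$ gives
\begin{eqnarray*}
v(x,t)\leq\frac{c(n,V_0,R_0)}{r^{n+2}}\int_{t-r^2}^{t}(1+s')^{-c_0}\left(\int_M u(\cdot,s')\,d\mu_{g_0(s')}\right)ds'\leq\frac{c(n,V_0,R_0)}{r^{n}}\,(1+t-r^2)^{-c_0},
\end{eqnarray*}
so that, after multiplying by $(1+t)^{c_0}$, one gets $K(x,t,y,s)\leq c(n,V_0,R_0)\,r^{-n}\big((1+t)/(1+t-r^2)\big)^{c_0}$. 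To close the argument, $r^2\leq t/4$ forces $1+t-r^2\geq\tfrac34(1+t)$, so the time factor is $\leq(4/3)^{c_0}$, a constant depending only on $n$ and $R_0$, while $r^2\geq(t-s)/4$ gives $r^{-n}\leq 2^{n}(t-s)^{-n/2}$; combining these yields the claimed $K(x,t,y,s)\leq c(n,V_0,R_0)(t-s)^{-n/2}$. The only genuinely delicate step is the one flagged above — turning the supersolution $K$ into a subsolution while keeping the final constant independent of $t$; this forces the balance $r\asymp\sqrt{t-s}$ together with $r\lesssim\sqrt t$, two constraints that are compatible precisely because $t-s\leq t$ and that together render the integrating factor $(1+t)^{-c_0}$ harmless.
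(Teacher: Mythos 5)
Your proof is correct and follows essentially the same route as the paper: one converts $K(\cdot,\cdot,y,s)$ into a subsolution by multiplying by a decreasing power of $(1+t)$ (the paper uses $\left(\tfrac{1+s}{1+t}\right)^{c(R_0)}$, which differs from your $(1+t)^{-c_0}$ only by the harmless constant $(1+s)^{c_0}$), applies Proposition \ref{L^1-mean-inequ} on a cylinder of radius $r\asymp\sqrt{t-s}$, and absorbs the spatial integral via the mass identity (\ref{L^1-est-fct}). Your extra truncation $r^2=\tfrac12\min\{t-s,t/2\}$ is superfluous — the paper's choice $r^2=(t-s)/2$ already satisfies $r^2\leq t/2$ since $s\geq 0$, which suffices to bound the integrating factor — but it causes no harm.
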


\begin{proof}
It suffices to apply lemma \ref{L^1-mean-inequ} together with proposition \ref{L^1-bound-heat-kernel-fct} to the nonnegative (sub)solution $$u(x,t):=\left(\frac{1+s}{1+t}\right)^{c(R_0)}K(x,t,y,s),$$ for some fixed $(y,s)\in M\times(0,+\infty)$ with $r^2=(t-s)/2$ : 
\begin{eqnarray*}
\left(\frac{1+s}{1+t}\right)^{c(R_0)}K(x,t,y,s)&\leq&\sup_{P\left(x,t,\sqrt{\frac{t-s}{2}}\right)}\left(\frac{1+s}{1+\cdot}\right)^{c(R_0)}K(\cdot,\cdot,y,s)\\
&\leq&\left(\frac{1+s}{1+t-r^2}\right)^{c(R_0)}\frac{c(n,V_0,R_0)}{(t-s)^{\frac{n+2}{2}}}\int_{P(x,t,\sqrt{\frac{t-s}{2}})}K(x',t',y,s)d\mu_{g_0(t')}(x')dt'\\
&\leq&\left(\frac{2+2s}{2+t+s}\right)^{c(R_0)}\frac{c(n,V_0,R_0)}{(t-s)^{\frac{n}{2}}}.
\end{eqnarray*}

Hence the expected estimate.

\end{proof}
\subsection{On-diagonal bounds of the Lichnerowicz heat kernel $K_L$}
Let $(M^n,g_0(t))_{t\in[0,+\infty)}$ be an expanding gradient Ricci soliton with positive curvature operator. 
We are now in a position to estimate the heat kernel $K_L$ associated to the Lichnerowicz operator.

Recall that it satisfies, by definition, that $K_L(x,t,y,s)\in \Hom(S^2T_y^*M,S^2T_x^*M)$, $K_L$ is $C^1$ w.r.t. time and $C^2$ w.r.t. space variables, and for any fixed $(y,s)\in M\times[0,+\infty)$,

\begin{equation}
\label{Lic-heat-equ-ker}
\left\{
\begin{array}{rl}
&\partial_tK_L(\cdot,\cdot,y,s)=L_tK_L(\cdot,\cdot,y,s),\\
&\\
&\partial_tg_0=-2\Ric(g_0(t)),\\
&\\
&\lim_{t\rightarrow s}K_L(\cdot,t,y,s)=\delta_y.\\
\end{array}
\right.
\end{equation}

Now, if $(x,t)\in M\times(0,+\infty)$ is fixed, then $K_L(x,t,\cdot,\cdot)$ is the heat kernel associated to the conjugate backward heat equation : 
\begin{equation}
\label{Lic-heat-equ-conj-ker}
\left\{
\begin{array}{rl}
&\partial_sK_L(x,t,\cdot,\cdot)=-L_sK_L(x,t,\cdot,\cdot)+\R_{g_0(s)}K_L(x,t,\cdot,\cdot),\\
&\\
&\partial_sg_0=2\Ric(g_0(s)),\\
&\\
&\lim_{s\rightarrow t}K_L(x,t,\cdot,s)=\delta_x.\\
\end{array}
\right.
\end{equation}

As a corollary of theorem \ref{comp-heat-ker} and proposition \ref{diag-upper-heat-ker}, we get that the Lichnerowicz heat flow associated to the Lichnerowicz operator is ultracontractive too, i.e.

\begin{theo}[On diagonal upper bound for $K_L$ : $L^2\rightarrow L^{\infty}$ bound]\label{diag-upper-heat-ker-Lic}
Let $(M^n,g_0(t))_{t\geq 0}$ be an expanding gradient Ricci soliton with positive curvature operator.
Then,
\begin{eqnarray*}
\|K_L(x,t,y,s)\|_{\Hom(S^2T_y^*M,S^2T_x^*M)}\leq\frac{c(n,V_0,R_0)}{(t-s)^{\frac{n}{2}}},\quad 0\leq s<t,\quad x,y\in M,
\end{eqnarray*}
where $R_0:=\sup_M\R_{g_0}$, and $V_0:=\AVR(g_0)$.
\end{theo}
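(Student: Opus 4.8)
The plan is to combine the pointwise comparison between the Lichnerowicz heat kernel and the scalar heat kernel (Theorem \ref{comp-heat-ker}) with the on-diagonal bound for the scalar heat kernel (Proposition \ref{diag-upper-heat-ker}). More precisely, fix $(y,s)\in M\times[0,+\infty)$ and a symmetric $2$-tensor $\xi\in S^2T^*_yM$ with $\arrowvert\xi\arrowvert_{g_0(s)}\leq 1$. Apply $K_L(\cdot,\cdot,y,s)$ to $\xi$ to obtain the solution $h(t):=K_L(\cdot,t,y,s)\xi$ of the homogeneous Lichnerowicz equation $\partial_th=L_th$ with $h(s)=\delta_y\,\xi$. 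First I would handle the initial-data issue: since $h(s)$ is a distribution, I would run the argument from a slightly later time $s'>s$, where $h(s')\in L^{\infty}(S^2T^*M,g_0(s'))$, and let $s'\downarrow s$ at the end, or alternatively smooth the initial condition and pass to the limit using the local uniform convergence of the approximating kernels.

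The main step is then: let $u(t)$ solve $\partial_tu=\Delta_{g_0(t)}u+\R_{g_0(t)}u$ with $u(s)=\arrowvert h(s)\arrowvert_{g_0(s)}$; by Theorem \ref{comp-heat-ker} we get $-u(t)g_0(t)\leq h(t)\leq u(t)g_0(t)$ in the sense of quadratic forms, hence $\arrowvert h(t)\arrowvert_{g_0(t)}\leq c(n)\,u(t)$ pointwise (the factor depending only on the dimension comes from passing from an inequality of quadratic forms to an inequality of tensor norms). Next I would represent $u$ via the scalar heat kernel: $u(x,t)=\int_M K(x,t,y',s)\arrowvert h(s)\arrowvert_{g_0(s)}(y')\,d\mu_{g_0(s)}(y')$. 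Since $\arrowvert h(s)\arrowvert$ is (a regularization of) $\arrowvert\xi\arrowvert\,\delta_y$ and $\arrowvert\xi\arrowvert_{g_0(s)}\leq1$, this gives $u(x,t)\leq K(x,t,y,s)$ in the limit, and Proposition \ref{diag-upper-heat-ker} yields $u(x,t)\leq c(n,V_0,R_0)(t-s)^{-n/2}$. Combining, $\arrowvert K_L(x,t,y,s)\xi\arrowvert_{g_0(t)}\leq c(n,V_0,R_0)(t-s)^{-n/2}$ for every unit $\xi$, which is exactly the claimed operator-norm bound $\|K_L(x,t,y,s)\|_{\Hom(S^2T^*_yM,S^2T^*_xM)}\leq c(n,V_0,R_0)(t-s)^{-n/2}$.

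The one point requiring care is that $K_L$ is an operator-valued kernel and Proposition \ref{diag-upper-heat-ker} is stated for scalars, so the comparison must be performed after pairing with a fixed tensor $\xi$ at the source point; the scalar solution $u$ controls $\arrowvert h\arrowvert$ uniformly over all such $\xi$ with unit norm, and this is what upgrades the scalar on-diagonal bound to the operator-norm bound. I expect the delicate—though routine—part to be the justification of the delta-function initial data for the scalar equation, i.e.\ that $u(x,t)\to K(x,t,y,s)$ as the regularization of $\arrowvert\xi\arrowvert\delta_y$ concentrates; this follows from the continuity and standard approximation properties of $K$ recorded via the Dirichlet-exhaustion construction in Proposition \ref{L^1-bound-heat-kernel-fct}, together with the fact that both $K_L$ and $K$ depend continuously on their arguments away from the diagonal. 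Everything else is a direct concatenation of Theorem \ref{comp-heat-ker} and Proposition \ref{diag-upper-heat-ker}, with constants depending only on $n$, $V_0=\AVR(g_0)$ and $R_0=\sup_M\R_{g_0}$ as stated.
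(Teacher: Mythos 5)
Your proposal is correct and follows exactly the route the paper intends: the paper states this theorem without proof, simply as a corollary of Theorem \ref{comp-heat-ker} and Proposition \ref{diag-upper-heat-ker}, which is precisely the concatenation you carry out. Your additional care with the operator-valued kernel (pairing with a unit $\xi$ at the source) and with regularizing the delta initial data fills in the routine details the paper leaves implicit.
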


As a corollary of the $L^1$ estimate (\ref{L^1-est-fct}) for the operator $\Delta_{g_0(t)}+\R_{g_0(t)}$ given by proposition \ref{L^1-bound-heat-kernel-fct} and the $L^{\infty}$ estimate due to theorem \ref{Est-Hom-lin-equ}, we get that the Lichnerowicz heat flow is bounded on $L^p$ for every $p\in[1,+\infty]$ : 

\begin{theo}[$L^p\rightarrow L^p$ estimates]\label{L^p-contractivity-Lic}
Let $(M^n,g_0(t))_{t\geq 0}$ be an expanding gradient Ricci soliton with positive curvature operator.
Then, for any $p\in[1,+\infty]$,
\begin{eqnarray*}
&&\|h_t\|_{L^p(d\mu_{g_0(t)})}\leq c(p)\| h_s\|_{L^p(d\mu_{g_0(s)})},\quad t>s,\\
&&h_t(x):=K_L(x,t,\cdot,s)\ast h_s,
\end{eqnarray*}
for some positive uniform constant $c(p)\geq 1$.
\end{theo}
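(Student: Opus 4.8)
The plan is to deduce the case $1<p<\infty$ from the two endpoints $p=1$ and $p=\infty$ by Riesz--Thorin interpolation, the whole point being that both endpoint bounds can be produced with a constant that is \emph{uniform} in $s$ and $t$.

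For $p=\infty$ I would simply invoke Theorem~\ref{Est-Hom-lin-equ} and the remark following it: the Lichnerowicz heat operator $T_{s,t}\colon h_s\mapsto h_t$ is a contraction for the time-dependent scalar product $H(t):=g_0(t)+2(1+t)\Ric(g_0(t))$. Since the soliton has positive curvature operator, $\Ric(g_0(t))\geq 0$ gives $H(t)\geq g_0(t)$, while the Type III estimate $(1+t)\R_{g_0(t)}\leq c$ together with $\Ric(g_0(t))\leq\R_{g_0(t)}\,g_0(t)$ (valid as $\Ric\geq 0$) gives $H(t)\leq(1+2c)g_0(t)$ uniformly in $t$. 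Hence $\|T_{s,t}\|_{L^{\infty}(g_0(s))\to L^{\infty}(g_0(t))}\leq c(\infty)$ with $c(\infty)$ independent of $s,t$.

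For $p=1$ the idea is to transfer the estimate to the scalar heat operator via the comparison Theorem~\ref{comp-heat-ker}. First suppose $h_s\in L^1\cap L^{\infty}$; let $u$ solve $\partial_tu=\Delta_{g_0(t)}u+\R_{g_0(t)}u$ with $u(s)=|h_s|_{g_0(s)}$. Theorem~\ref{comp-heat-ker} gives $-u(t)g_0(t)\leq h_t\leq u(t)g_0(t)$ in the sense of quadratic forms, hence pointwise $|h_t|_{g_0(t)}\leq\sqrt{n}\,u(t)$ (all $g_0(t)$-eigenvalues of $h_t$ lie in $[-u(t),u(t)]$). Writing $u(t)(x)=\int_M K(x,t,y,s)\,|h_s|_{g_0(s)}(y)\,d\mu_{g_0(s)}(y)$ and using the exact conservation $\int_M K(x,t,y,s)\,d\mu_{g_0(t)}(x)=1$ from \eqref{L^1-est-fct}, Fubini yields $\int_M u(t)\,d\mu_{g_0(t)}=\int_M|h_s|_{g_0(s)}\,d\mu_{g_0(s)}$, so $\|h_t\|_{L^1(g_0(t))}\leq\sqrt{n}\,\|h_s\|_{L^1(g_0(s))}$. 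The general case $h_s\in L^1$ follows by approximation in $L^1$ by tensors in $L^1\cap L^{\infty}$, the convolution $K_L(\cdot,t,\cdot,s)\ast h_s$ being well defined by the on-diagonal bound of Theorem~\ref{diag-upper-heat-ker-Lic}. Thus $\|T_{s,t}\|_{L^1(g_0(s))\to L^1(g_0(t))}\leq\sqrt{n}=:c(1)$, again uniformly in $s,t$.

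Finally, $T_{s,t}$ is the linear integral operator with kernel $K_L(\cdot,t,\cdot,s)$, and since the fibres $S^2T_x^*M$ are finite dimensional (the bundle trivializes over $M\cong\mathbb{R}^n$), the vector-valued Riesz--Thorin interpolation theorem applies; interpolating the two endpoint bounds gives $\|T_{s,t}\|_{L^p(g_0(s))\to L^p(g_0(t))}\leq c(1)^{1/p}c(\infty)^{1-1/p}=:c(p)$ for every $p\in(1,\infty)$, uniformly in $s,t$, and enlarging the constant we may assume $c(p)\geq 1$. The only genuinely delicate step is the $L^1$ endpoint: one needs a constant independent of $s$ and $t$, which is available precisely because the scalar operator $\Delta_{g_0(t)}+\R_{g_0(t)}$ conserves $L^1$ mass \emph{exactly} while failing to be uniformly $L^{\infty}$-bounded (cf.\ Remark~\ref{rk-L^1-L^infty}), and because Theorem~\ref{comp-heat-ker} lets one dominate $|h_t|$ pointwise by the scalar heat evolution of $|h_s|$ rather than by the operator norm of $K_L$ (which is only bounded up to a factor $((1+t)/(1+s))^c$).
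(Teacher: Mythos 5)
Your proof is correct and follows exactly the route the paper intends: the paper states this theorem as a corollary of the $L^{1}$ conservation in Proposition \ref{L^1-bound-heat-kernel-fct} (transferred to tensors via Theorem \ref{comp-heat-ker}) and the uniform $L^{\infty}$ bound of Theorem \ref{Est-Hom-lin-equ}, followed by interpolation. You have simply filled in the details (the $\sqrt{n}$ from the quadratic-form comparison, the Fubini step, and the vector-valued Riesz--Thorin argument) in the natural way.
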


\begin{rk}
A priori, Theorem \ref{L^p-contractivity-Lic} does not hold for the scalar operator $\Delta_{g_0(t)}+\R_{g_0(t)}$ : see (\ref{L^infty-est-fct}) in proposition \ref{L^1-bound-heat-kernel-fct}.
\end{rk}
\begin{rk}\label{rk-contractivity-2}
The Lichnerowicz heat flow is not a priori contractive on $L^p$ unless $p=1$. In particular, the $L^2$ norm of a solution to the Lichnerowicz homogeneous equation is not necessarily decreasing.
\end{rk}

\subsection{Off-diagonal bounds for the Lichnerowicz kernel $K_L$}
Let $(M^n,g_0(t))_{t\geq 0}$ be an expanding gradient Ricci soliton with positive curvature operator.

Let $\Omega\subset M$ be a bounded open subset of $M$ with smooth boundary. Let $x_0\in \Omega$ and $(h_{x_0}(t))_{t\geq 0}$ be a one parameter family of symmetric $2$-tensors on $\Omega$ satisfying in the smooth sense,
 \begin{equation}
\label{First-step-Dir-cond}
\left\{
\begin{array}{rl}
&\partial_th_{x_0}=\Delta_{g_0(t)}h_{x_0}+\Rm(g_0(t))\ast h_{x_0},\quad\mbox{on $\Omega\times(0,+\infty)$},\\
&\\
&\partial_tg_0(t)=-2\Ric(g_0(t)),\quad\mbox{on $\Omega\times(0,+\infty)$},\\
&\\
&\nabla^{g_0(t)}\arrowvert h_{x_0}\arrowvert^2\cdot\textbf{n}\leq 0,\quad\mbox{on $\partial\Omega\times(0,+\infty)$},\\
&\\
&h_{x_0}(x,0)=0,\quad x\neq x_0,
\end{array}
\right.
\end{equation}
where $\textbf{n}$ denotes the exterior normal to $\partial \Omega$ and where $\Rm(g_0(t))\ast$ denotes any contraction of the curvature operator on $S^2T^*M$.
Consider the following integrals : 
\begin{eqnarray*}
&&I_R(t):=\int_{\Omega\setminus B_{g_0(t)}(x_0,R)}\arrowvert h_{x_0}\arrowvert^2_{g_0(t)}(x,t)d\mu_{g_0(t)}(x),\\
 &&E_D(t):=\int_{\Omega}\arrowvert h_{x_0}\arrowvert^2_{g_0(t)}(x,t)e^{2\xi_D(x,t)}d\mu_{g_0(t)}(x),\quad \xi_D(x,t):=\frac{d_{g_0(t)}^2(x_0,x)}{Dt},
\end{eqnarray*}
for $R\geq 0$ and $t>0$ such that $B_{g_0(t)}(x_0,R)\subset \Omega$.

\begin{theo}\label{Dav-Gri-Est-Ene}
Let $(M^n,g_0(t))_{t\geq 0}$ be an expanding gradient Ricci soliton with positive curvature operator.
Assume that, for some positive constant $C_0$, the following holds : 
\begin{eqnarray*}
I_{0}(t)\leq \frac{C_0}{t^{n/2}},\quad t>0.
\end{eqnarray*}

Then, for some positive constants $C_1$, $C_2$ depending on $C_0$,
\begin{eqnarray*}
&&I_R(t)\leq \frac{C_1}{t^{n/2}}e^{-\frac{2C_2}{D}\cdot\frac{R^2}{t}},\quad t>0,\quad D\geq 4,\\
&& E_D(t)\leq \frac{C(D,C_0)}{t^{n/2}},\quad t>0,\quad D>4.
\end{eqnarray*}
\end{theo}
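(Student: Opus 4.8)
The plan is to follow Grigor'yan's integrated maximum principle / energy method, adapted to the Ricci flow background. Write $u := |h_{x_0}|^2_{g_0(t)}$, which by the first equation in \eqref{First-step-Dir-cond} together with Kato's inequality and the curvature bound satisfies the differential inequality
\[
\partial_t u \le \Delta_{g_0(t)} u + c(n)|\Rm(g_0(t))|_{g_0(t)}\, u \le \Delta_{g_0(t)} u + \frac{c(n)R_0}{1+t}\, u
\]
on $\Omega\times(0,+\infty)$, with the Neumann-type boundary condition $\nabla^{g_0(t)}u\cdot\mathbf{n}\le 0$ on $\partial\Omega$. Absorbing the zeroth order term by replacing $u$ with $(1+t)^{-c(n)R_0}u$ (which only changes the constants $C_0,C_1,C(D,C_0)$), we may assume $\partial_t u\le \Delta_{g_0(t)}u$. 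The heart of the argument is then to differentiate the weighted energy $E_D(t)=\int_\Omega u\, e^{2\xi_D}\,d\mu_{g_0(t)}$, where $\xi_D(x,t)=d_{g_0(t)}^2(x_0,x)/(Dt)$.

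The key computation is the following: using $\partial_t d\mu_{g_0(t)} = -\R_{g_0(t)} d\mu_{g_0(t)}\le 0$ (nonnegative scalar curvature), integrating by parts, and using the boundary condition to discard the boundary term, one gets
\[
\frac{d}{dt}E_D(t) \le \int_\Omega u\,\Big(\partial_t \xi_D + |\nabla^{g_0(t)}\xi_D|^2_{g_0(t)}\Big)\, 2\,e^{2\xi_D}\,d\mu_{g_0(t)},
\]
after the standard manipulation $\int (\Delta u) e^{2\xi} = \int u\,\Delta(e^{2\xi}) = \int u\,(2\Delta\xi + 4|\nabla\xi|^2)e^{2\xi}$ combined with $\int \nabla u\cdot \nabla(e^{2\xi})$ reorganized to produce the "$\partial_t\xi + |\nabla\xi|^2$" combination (the Grigor'yan trick: the cross terms are arranged so that the surviving coefficient is exactly $\partial_t\xi_D+|\nabla\xi_D|^2_{g_0(t)}$). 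Here one must be careful that $\xi_D$ is only Lipschitz where the distance function fails to be smooth; this is handled by a standard mollification/localization of $d_{g_0(t)}(x_0,\cdot)$ as in the proof of Theorem \ref{Est-Hom-lin-equ}, or by noting the inequality holds in the barrier sense. Now for the distance function of a Type III Ricci flow one has the two crucial facts: $|\nabla^{g_0(t)} d_{g_0(t)}(x_0,\cdot)|_{g_0(t)}\le 1$ a.e., and — this is where Lemma \ref{C^0-distance-est-Type-III} enters — $\partial_t d_{g_0(t)}(x_0,x)$ is controlled, say $|\partial_t d_{g_0(t)}(x_0,x)|\le c(R_0)/\sqrt{t}$ along the flow, so that a short computation gives
\[
\partial_t\xi_D + |\nabla^{g_0(t)}\xi_D|^2_{g_0(t)} \le -\frac{d_{g_0(t)}^2}{Dt^2} + \frac{2 d_{g_0(t)}\cdot c(R_0)}{D t^{3/2}} + \frac{d_{g_0(t)}^2}{D^2 t^2}\cdot 1 \le -\frac{1}{2}\Big(\frac{1}{D}-\frac{1}{D^2}\Big)\frac{d_{g_0(t)}^2}{t^2} + \frac{c(R_0)^2}{D(D-1)}\cdot\frac{1}{t},
\]
by Young's inequality, for $D>4$ (so that $1/D-1/D^2>0$). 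The first term is $\le 0$ and can be dropped; the second gives $\frac{d}{dt}E_D(t)\le \frac{c'(R_0,D)}{t}E_D(t)$, hence $E_D(t)\le E_D(t_0)(t/t_0)^{c'}$ for $t\ge t_0$. Letting $t_0\to 0$ and using that $E_D(t_0)\to \int_\Omega u(x,0)\,e^{2\xi_D}d\mu = 0$ does not immediately give the sharp power; instead one runs the standard Grigor'yan bootstrap: fix $t$, run the energy estimate on the interval $[t/2,t]$ to compare $E_D(t)$ with $E_{D'}(t/2)$ for a slightly larger $D'$, then iterate, and at the initial scale use the hypothesis $I_0(t)\le C_0 t^{-n/2}$ to bound $E_{D_\infty}$ at the smallest scale. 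More directly: write $E_D(t) \le e^{R^2/(Dt)}$-type comparisons are \emph{not} what is wanted; rather, combine the monotonicity $\frac{d}{dt}\big(t^{-c'}E_D(t)\big)\le 0$ with the on-diagonal bound $I_0(s)\le C_0 s^{-n/2}$ evaluated as $s\to 0$ after first passing through an intermediate time where $e^{2\xi_D}$ is harmless — concretely, for $0<s<t$ one has $E_D(t)\le (t/s)^{c'} E_D(s)$ and, letting $s\to 0^+$ while using $\xi_D(x,s)\to+\infty$ for $x\ne x_0$, one sees $E_D(s)$ could blow up, so the clean route is: apply the energy inequality on $[\epsilon, t]$ with the weight $\xi_D^{(\epsilon)}(x,s):= d_{g_0(s)}^2(x_0,x)/(D(s-\epsilon)+\epsilon_0)$ shifted so it is finite at $s=\epsilon$, take $\epsilon\to 0$, and at $s=\epsilon$ bound $\int u(\cdot,\epsilon)e^{2\xi}\le e^{C/\epsilon_0}I_0(\epsilon)\le e^{C/\epsilon_0}C_0\epsilon^{-n/2}$; optimizing over the auxiliary parameters yields $E_D(t)\le C(D,C_0)t^{-n/2}$. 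This is the second claimed estimate.

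For the first estimate, $I_R(t)$ follows from $E_D(t)$ by the elementary observation that on $\Omega\setminus B_{g_0(t)}(x_0,R)$ one has $e^{2\xi_D(x,t)}\ge e^{2R^2/(Dt)}$, hence
\[
I_R(t) = \int_{\Omega\setminus B_{g_0(t)}(x_0,R)} u\, d\mu_{g_0(t)} \le e^{-2R^2/(Dt)}\int_\Omega u\, e^{2\xi_D}\,d\mu_{g_0(t)} = e^{-2R^2/(Dt)}E_D(t) \le \frac{C(D,C_0)}{t^{n/2}}e^{-2R^2/(Dt)};
\]
taking $D=4$ as a boundary case (handled by a limiting argument, or by simply running the above with $D\ge 4$ fixed and renaming constants $C_1=C(D,C_0)$, $C_2 = 1/D$ — but since the sharp statement wants $D\ge 4$ in the $I_R$ bound and $D>4$ in the $E_D$ bound, one proves $E_{D'}$ for some $4<D'<D$ and then $I_R(t)\le e^{-2R^2/(D't)}E_{D'}(t)\le \frac{C_1}{t^{n/2}}e^{-2C_2 R^2/(Dt)}$ with $C_2 := D/(2D')<1/2$), gives the claim.

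\textbf{Main obstacle.} The genuinely delicate point is the differentiation of $E_D(t)$ through the non-smooth, \emph{time-dependent} weight $\xi_D$: one must simultaneously control the first variation of the Riemannian measure ($\partial_t d\mu = -\R\, d\mu$, which has the favorable sign thanks to $\R_{g_0(t)}\ge 0$), the lack of smoothness of $d_{g_0(t)}(x_0,\cdot)$ on the cut locus, and — the part that genuinely uses the Ricci flow rather than a static metric — the estimate $|\partial_t d_{g_0(t)}(x_0,\cdot)|\le c(R_0)/\sqrt t$ from Lemma \ref{C^0-distance-est-Type-III}, which is exactly what keeps $\partial_t\xi_D$ from destroying the Gaussian. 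Everything else is the standard Davies–Grigor'yan integrated maximum principle plus a Nash–Moser-type bootstrap in the scale parameter $D$.
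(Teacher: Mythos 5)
There is a genuine gap. Your proof inverts the logical order of Grigor'yan's scheme: you try to establish $E_D(t)\leq C t^{-n/2}$ \emph{directly} by differentiating the weighted energy with the unbounded, time-dependent weight $\xi_D(x,t)=d_{g_0(t)}^2(x_0,x)/(Dt)$, and then deduce the $I_R$ bound from it. The paper does the opposite, and for good reasons. First, with your weight the finiteness of $E_D(t)$ is exactly what has to be proven, so the differential inequality cannot be closed a priori. Second, your cross term $\partial_t\xi_D$ coming from $\arrowvert\partial_t d_{g_0(t)}\arrowvert\leq c(R_0)/\sqrt{t}$ produces, after Young's inequality, a correction $c/t\cdot E_D(t)$ whose integral $\ln(t/\epsilon)^{c}$ diverges as the initial time $\epsilon\to 0$; this is precisely the regime where the on-diagonal hypothesis $I_0(\epsilon)\leq C_0\epsilon^{-n/2}$ must be injected, so the Gronwall factor destroys the estimate. (The paper's only correction term is $c(R_0)/(1+t)$, coming from the curvature, which integrates to a bounded quantity near $t=0$.) Third, the step you describe as ``apply the energy inequality with a shifted weight $\xi_D^{(\epsilon)}$ and optimize over the auxiliary parameters'' is not carried out, and as sketched it gives the bound $e^{C/\epsilon_0}C_0\epsilon^{-n/2}$ at the initial time, which blows up; no choice of parameters is exhibited that yields $t^{-n/2}$.

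The paper circumvents all three difficulties by a different choice of weight in the first step: $\zeta_D(x,t)=d_{x_0,T}^2(x)/(D(t-T))$ with $t<T$, where $d_{x_0,T}(x)=(R-d_{g_0(T)}(x_0,x))_+$ is \emph{frozen} at the later time $T$. This weight is nonpositive (hence bounded, so the weighted energy is finite and dominated by $I_0$), has $\partial_t d_{x_0,T}=0$, and satisfies $\arrowvert\nabla^{g_0(t)}\zeta_D\arrowvert^2_{g_0(t)}\leq 4 d_{x_0,T}^2/(D^2(t-T)^2)$ via the metric monotonicity $g_0(t)\leq g_0(T)$ of lemma \ref{C^0-distance-est-Type-III} — not via the bound on $\partial_t d$. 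Hence $\partial_t\zeta_D+\arrowvert\nabla\zeta_D\arrowvert^2\leq 0$ exactly for $D\geq 4$, with no residual $1/t$ term. The resulting monotonicity yields the recursive inequality $I_R(t)\lesssim I_r(\tau)+e^{-\frac{2}{D}(R-r)^2/(t-\tau)}\tau^{-n/2}$, which is then iterated over radii $R_k\downarrow R/2$ and times $t_k=t/\gamma^k$, using the on-diagonal hypothesis at every scale, to produce the Gaussian decay of $I_R(t)$. Only afterwards is $E_D(t)$ obtained by summing the $I_{2^kR}(t)$ bounds over dyadic annuli (for $D$ large), and the range $D>4$ is recovered by a further comparison with the weight $d_{g_0(t)}^2/(4(t+T))$. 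Your final reduction $I_R(t)\leq e^{-2R^2/(Dt)}E_D(t)$ is correct as far as it goes, but it rests entirely on the $E_D$ bound that your argument does not establish.
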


\begin{rk}
The proof works for different behavior of $I_0(t)$ as soon as it is \textit{regular} in the sense of Grigor'yan : see \cite{Gri-Upp-Ene-Dav} for a definition.
\end{rk}

\begin{rk}\label{rk-dist-bd-back-equ}
The proof of theorem \ref{Dav-Gri-Est-Ene} neither uses the positivity of the curvature operator nor the special structure of the Ricci flow. It only uses lemma \ref{C^0-distance-est-Type-III} to get good bounds on the distance distortions. 
Therefore, it can also be applied to the backward heat equation associated to (\ref{Lic-heat-equ-conj-ker}).
\end{rk}

\begin{proof}
As in \cite{Gri-Upp-Ene-Dav}, the proof consists in three steps :
\begin{enumerate}
\item We first estimate the evolution of $I_0(t)$ as follows : 
\begin{lemma}\label{lem-evo-int}
The function $$t\in(0,+\infty)\rightarrow (1+t)^{-c(R_0)}I_0(t),$$ is non increasing, where $$R_0:=\sup_{M\times[0,+\infty)}(1+t)\arrowvert\Rm(g_0(t))\arrowvert_{g_0(t)}.$$
\end{lemma}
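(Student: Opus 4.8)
The plan is to differentiate $I_0(t)$ in time and absorb every resulting term into a Gronwall-type inequality governed by the Type~III curvature decay. First I would record the two pointwise evolution facts coming from the Ricci flow structure. Since $\partial_t g_0(t)=-2\Ric(g_0(t))$ and $\Ric(g_0(t))\ge 0$ (a consequence of the positive curvature operator), the volume element satisfies $\partial_t\,d\mu_{g_0(t)}=-\R_{g_0(t)}\,d\mu_{g_0(t)}\le 0$. Combining the equation $\partial_th_{x_0}=\Delta_{g_0(t)}h_{x_0}+\Rm(g_0(t))\ast h_{x_0}$ with the time derivative of the metric contractions in $\arrowvert h_{x_0}\arrowvert^2_{g_0(t)}$, and using $\langle h_{x_0},\Rm(g_0(t))\ast h_{x_0}\rangle\le c(n)\arrowvert\Rm(g_0(t))\arrowvert_{g_0(t)}\arrowvert h_{x_0}\arrowvert^2_{g_0(t)}$ together with the fact that the extra $\Ric\ast h_{x_0}\ast h_{x_0}$ term produced by $\partial_tg_0^{-1}$ is also bounded by $c(n)\arrowvert\Rm(g_0(t))\arrowvert_{g_0(t)}\arrowvert h_{x_0}\arrowvert^2_{g_0(t)}$, one obtains
$$\partial_t\arrowvert h_{x_0}\arrowvert^2_{g_0(t)}\le \Delta_{g_0(t)}\arrowvert h_{x_0}\arrowvert^2_{g_0(t)}-2\arrowvert\nabla^{g_0(t)}h_{x_0}\arrowvert^2_{g_0(t)}+c(n)\arrowvert\Rm(g_0(t))\arrowvert_{g_0(t)}\arrowvert h_{x_0}\arrowvert^2_{g_0(t)}.$$

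Next I would differentiate $I_0(t)=\int_\Omega\arrowvert h_{x_0}\arrowvert^2_{g_0(t)}\,d\mu_{g_0(t)}$ under the integral sign, which is legitimate for $t>0$ by the smoothness of $h_{x_0}$ on $\Omega\times(0,+\infty)$ and the finiteness of $I_0$ (in the situation where the lemma is applied, the on-diagonal bound hypothesized in Theorem~\ref{Dav-Gri-Est-Ene} provides the required control). This yields
$$I_0'(t)\le \int_\Omega\Big(\Delta_{g_0(t)}\arrowvert h_{x_0}\arrowvert^2_{g_0(t)}-2\arrowvert\nabla^{g_0(t)}h_{x_0}\arrowvert^2_{g_0(t)}+c(n)\arrowvert\Rm(g_0(t))\arrowvert_{g_0(t)}\arrowvert h_{x_0}\arrowvert^2_{g_0(t)}\Big)d\mu_{g_0(t)}-\int_\Omega\R_{g_0(t)}\arrowvert h_{x_0}\arrowvert^2_{g_0(t)}\,d\mu_{g_0(t)}.$$
Three of these four contributions are nonpositive: the Laplacian term integrates to $\int_{\partial\Omega}\nabla^{g_0(t)}\arrowvert h_{x_0}\arrowvert^2_{g_0(t)}\cdot\mathbf{n}\le 0$ by the Neumann-type boundary condition in (\ref{First-step-Dir-cond}), the gradient term is manifestly $\le 0$, and the scalar-curvature term is $\le 0$ since $\R_{g_0(t)}\ge 0$. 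Hence, using the Type~III bound $(1+t)\arrowvert\Rm(g_0(t))\arrowvert_{g_0(t)}\le R_0$,
$$I_0'(t)\le c(n)\sup_M\arrowvert\Rm(g_0(t))\arrowvert_{g_0(t)}\,I_0(t)\le \frac{c(n)R_0}{1+t}\,I_0(t).$$

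Finally I would integrate this differential inequality: writing $c(R_0):=c(n)R_0$ (the $n$-dependence being suppressed, as in the statement), we get $\frac{d}{dt}\log\big((1+t)^{-c(R_0)}I_0(t)\big)\le 0$ on any interval where $I_0>0$, and if $I_0$ vanishes at some time it stays zero by the equation; in either case $(1+t)^{-c(R_0)}I_0(t)$ is non-increasing. The only points requiring genuine care are the justification of differentiating under the integral sign near $t=0$ — handled by working on compact subintervals $[t_1,t_2]\subset(0,+\infty)$ with the integrand and its $t$-derivative dominated — and the bookkeeping of the curvature contractions so that both the Ricci term from $\partial_tg_0^{-1}$ and the zeroth-order term of the equation are absorbed into the single quantity $c(n)\arrowvert\Rm(g_0(t))\arrowvert_{g_0(t)}\arrowvert h_{x_0}\arrowvert^2_{g_0(t)}$. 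Neither is a real obstacle; the substance of the lemma is exactly this one integration by parts combined with the Type~III decay.
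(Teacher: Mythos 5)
Your proposal is correct and follows essentially the same route as the paper: differentiate $I_0$ accounting for the evolution of the metric contractions and of the volume element, integrate the Laplacian by parts using the boundary condition $\nabla^{g_0(t)}\arrowvert h_{x_0}\arrowvert^2\cdot\mathbf{n}\leq 0$, discard the nonpositive gradient and scalar-curvature terms, bound the remaining curvature term by $c(n)R_0(1+t)^{-1}I_0(t)$ via the Type~III decay, and conclude by Gronwall. The only cosmetic difference is that the paper absorbs the $\Ric\otimes h$ and $\R_{g_0(t)}$ contributions directly into the $\Rm(g_0(t))\ast h$ notation rather than discarding the volume-element term separately.
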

\begin{proof}[Proof of lemma \ref{lem-evo-int}]
We temporally simplify the notations by suppressing the dependence on the base point $x_0$.
 
By using the properties of $(h(t):=h_{x_0}(t))_{t\geq 0}$ given by (\ref{First-step-Dir-cond}),
\begin{eqnarray*}
\partial_tI_0(t)&=&\int_{\Omega}2\left<\partial_th+h\otimes \Ric(g_0(t)+\Ric(g_0(t))\otimes h-\frac{\R_{g_0(t)}}{2}h,h\right>_{g_0(t)}d\mu_{g_0(t)}\\
&=&\int_{\Omega}2\left<\Delta_{g_0(t)}h+\Rm(g_0(t))\ast h,h\right>_{g_0(t)}d\mu_{g_0(t)}\\
&\leq&-2\int_{\Omega}\arrowvert \nabla^{g_0(t)}h\arrowvert^2_{g_0(t)}d\mu_{g_0(t)}+\frac{c(n)R_0}{1+t}I_0(t)\\
&\leq&\frac{c(n)R_0}{1+t}I_0(t).
\end{eqnarray*}
Hence the result by the Gronwall inequality.
\end{proof}

Consider the following weight function 
\begin{eqnarray*}
\zeta_D(x,t):=\frac{d_{x_0,T}^2(x)}{D(t-T)},\quad x\in M,\quad T>t>0,
\end{eqnarray*}
where $d_{x_0,T}(x)$ is defined by 
\begin{equation}
\label{Dist-fct-ad-hoc}
d_{x_0,T}(x):=\left\{
\begin{array}{rl}
&R-d_{g_0(T)}(x_0,x),\quad \mbox{if $x\in B_{g_0(T)}(x_0,R)$},\\
&\\
&0,\quad \mbox{otherwise}.
\end{array}
\right.
\end{equation}
Consider the following integral quantity,
\begin{eqnarray*}
\En_D(t):=\int_{\Omega}\arrowvert h\arrowvert^2_{g_0(t)}(x,t)e^{2\zeta_D(x,t)}d\mu_{g_0(t)}(x), \quad t>0.
\end{eqnarray*}

Then we estimate $\En_D(t)$ as follows : 
\begin{lemma}\label{lem-est-Ene-evo}
The function $$t\in(0,T)\rightarrow (1+t)^{-c(R_0)}\En_D(t),$$ is non increasing for any $D\geq 4$ for some positive constant $c(R_0)$.
\end{lemma}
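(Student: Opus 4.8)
The plan is to differentiate $\En_D$ in time along the flow and to absorb every resulting term except a harmless multiple of $\tfrac{1}{1+t}\En_D$, exactly in the spirit of the proof of lemma \ref{lem-evo-int}, but now carrying the weight $e^{2\zeta_D}$. First I would record the elementary facts about the weight: since $\zeta_D(x,t)=d_{x_0,T}^2(x)/(D(t-T))$ freezes its spatial profile at the final time $T$, it is smooth in $t$ on $(0,T)$ with
\[
\partial_t\zeta_D=-\frac{d_{x_0,T}^2}{D(t-T)^2}\le 0,\qquad |\nabla^{g_0(t)}\zeta_D|^2_{g_0(t)}=\frac{4\,d_{x_0,T}^2}{D^2(t-T)^2}\,|\nabla^{g_0(t)}d_{x_0,T}|^2_{g_0(t)},
\]
and $\zeta_D\le 0$, so $\En_D(t)\le\int_\Omega|h_{x_0}|^2_{g_0(t)}d\mu_{g_0(t)}<+\infty$ on the bounded set $\Omega$.

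Next I would compute, using $\partial_tg_0=-2\Ric(g_0(t))$ (hence $\partial_t d\mu_{g_0(t)}=-\R_{g_0(t)}d\mu_{g_0(t)}$), the evolution equation for $h_{x_0}$ from (\ref{First-step-Dir-cond}), and the Bochner-type inequality $\partial_t|h_{x_0}|^2_{g_0(t)}\le \Delta_{g_0(t)}|h_{x_0}|^2_{g_0(t)}-2|\nabla^{g_0(t)}h_{x_0}|^2_{g_0(t)}+\tfrac{c(n)R_0}{1+t}|h_{x_0}|^2_{g_0(t)}$ used in lemma \ref{lem-evo-int} (this is where the Type III bound $(1+t)|\Rm(g_0(t))|\le R_0$ enters), obtaining
\[
\frac{d}{dt}\En_D(t)\le\int_\Omega\Big(\Delta_{g_0(t)}|h_{x_0}|^2-2|\nabla^{g_0(t)}h_{x_0}|^2+\tfrac{c(n)R_0}{1+t}|h_{x_0}|^2+2(\partial_t\zeta_D)|h_{x_0}|^2-\R_{g_0(t)}|h_{x_0}|^2\Big)e^{2\zeta_D}d\mu_{g_0(t)}.
\]
The term $-\R_{g_0(t)}|h_{x_0}|^2 e^{2\zeta_D}$ is dropped, $\R_{g_0(t)}\ge 0$ since the curvature operator is positive. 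Integrating the Laplacian term by parts gives a boundary contribution $\int_{\partial\Omega}(\nabla^{g_0(t)}|h_{x_0}|^2\cdot\mathbf{n})e^{2\zeta_D}$, which is $\le 0$ by the Neumann-type condition in (\ref{First-step-Dir-cond}) (and in fact $e^{2\zeta_D}\equiv 1$ near $\partial\Omega$ because $d_{x_0,T}$ is supported in $B_{g_0(T)}(x_0,R)\Subset\Omega$), together with $-2\int_\Omega e^{2\zeta_D}\langle\nabla^{g_0(t)}|h_{x_0}|^2,\nabla^{g_0(t)}\zeta_D\rangle$, which by Young's inequality is $\le\int_\Omega\big(2|\nabla^{g_0(t)}h_{x_0}|^2+2|h_{x_0}|^2|\nabla^{g_0(t)}\zeta_D|^2\big)e^{2\zeta_D}$. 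The two $\pm 2|\nabla^{g_0(t)}h_{x_0}|^2$ cancel, leaving
\[
\frac{d}{dt}\En_D(t)\le\frac{c(n)R_0}{1+t}\En_D(t)+2\int_\Omega|h_{x_0}|^2\big(|\nabla^{g_0(t)}\zeta_D|^2_{g_0(t)}+\partial_t\zeta_D\big)e^{2\zeta_D}d\mu_{g_0(t)}.
\]

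The decisive step will be to see that the bracket is nonpositive for every $D\ge 4$. Since $\Ric(g_0(t))\ge 0$, the metric is non-increasing in $t$, hence $g_0(t)^{-1}\le g_0(T)^{-1}$ for $t\le T$, so $|\nabla^{g_0(t)}d_{x_0,T}|^2_{g_0(t)}\le|\nabla^{g_0(T)}d_{x_0,T}|^2_{g_0(T)}\le 1$ almost everywhere, the truncated distance $d_{x_0,T}$ being $1$-Lipschitz for $g_0(T)$ (equivalently, one invokes lemma \ref{C^0-distance-est-Type-III} for the distance distortion). Therefore
\[
|\nabla^{g_0(t)}\zeta_D|^2_{g_0(t)}+\partial_t\zeta_D=\frac{d_{x_0,T}^2}{D(t-T)^2}\Big(\tfrac{4}{D}\,|\nabla^{g_0(t)}d_{x_0,T}|^2_{g_0(t)}-1\Big)\le 0
\]
as soon as $D\ge 4$, and we conclude $\frac{d}{dt}\En_D(t)\le\frac{c(n)R_0}{1+t}\En_D(t)$, so that $t\mapsto(1+t)^{-c(R_0)}\En_D(t)$ is non-increasing with $c(R_0):=c(n)R_0$ by Gronwall's inequality. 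The only genuinely delicate points are the justification of the integration by parts with the merely Lipschitz weight $\zeta_D$ (routine by approximation, since $|\nabla^{g_0(t)}d_{x_0,T}|\le 1$ a.e.) and the sign of the distance-gradient term, which is precisely where the monotonicity of the metric — equivalently $\Ric(g_0(t))\ge 0$ — is used and which pins down the threshold $D=4$.
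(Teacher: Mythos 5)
Your proof is correct and follows essentially the same route as the paper: differentiate $\En_D$, integrate by parts, complete the square (the paper keeps $-2\int|\nabla^{g_0(t)}(e^{\zeta_D}h)|^2\leq 0$ where you cancel the gradient terms via Young's inequality, an equivalent step), and then use $\partial_t\zeta_D+|\nabla^{g_0(t)}\zeta_D|^2_{g_0(t)}\leq 0$ for $D\geq 4$, which rests exactly as in the paper on the monotonicity $g_0(t)\geq g_0(T)$ from $\Ric\geq 0$ and the $1$-Lipschitz property of $d_{x_0,T}$ with respect to $g_0(T)$. Your explicit treatment of the boundary term and of the sign of $\zeta_D$ only makes explicit what the paper leaves implicit.
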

\begin{proof}[Proof of lemma \ref{lem-est-Ene-evo}]
We proceed as in the proof of lemma \ref{lem-evo-int} :
\begin{eqnarray*}
\partial_t\En_D(t)&=&\int_{\Omega}2\left<e^{\zeta_D}\Delta_{g_0(t)}h+\partial_t\zeta_De^{\zeta_D}h,e^{\zeta_D}h\right>_{g_0(t)}d\mu_{g_0(t)}\\
&&+\int_{\Omega}<\Rm(g_0(t))\ast e^{\zeta_D}h,e^{\zeta_D} h>_{g_0(t)}d\mu_{g_0(t)}\\
&\leq&-2\int_{\Omega}\left<\nabla^{g_0(t)}h,\nabla^{g_0(t)}\left(e^{2\zeta_D}h\right)\right>d\mu_{g_0(t)}\\
&&+2\int_{\Omega}\partial_t\zeta_De^{2\zeta_D}\arrowvert  h\arrowvert^2_{g_0(t)}d\mu_{g_0(t)}+\frac{c(R_0)}{1+t}\En_D(t)\\
&=&-2\int_{\Omega}\arrowvert \nabla^{g_0(t)}\left(e^{\zeta_D}h\right)\arrowvert^2_{g_0(t)}d\mu_{g_0(t)}\\
&&+2\int_{\Omega}\left(\arrowvert \nabla^{g_0(t)}\zeta_D\arrowvert_{g_0(t)}^2+\partial_t\zeta_D\right)e^{2\zeta_D}\arrowvert  h\arrowvert^2_{g_0(t)}d\mu_{g_0(t)}+\frac{c(R_0)}{1+t}\En_D(t).\\
\end{eqnarray*}
Now,
\begin{eqnarray*}
\partial_t\zeta_D(x,t)&=&\frac{\partial_td_{x_0,T}^2(x))}{D(t-T)}-\frac{d_{x_0,T}^2(x)}{D(t-T)^2}\\
&=&-\frac{d_{x_0,T}^2(x)}{D(t-T)^2},\\
\arrowvert\nabla^{g_0(t)}\zeta_D\arrowvert_{g_0(t)}^2(x)&=&g_0(t)^{ij}\partial_i\zeta_D\partial_j\zeta_D\\
&\leq&g_0(T)^{ij}\partial_i\zeta_D\partial_j\zeta_D\\
&\leq& \frac{4d_{x_0,T}^2(x)}{D^2(t-T)^2},
\end{eqnarray*}
where we used lemma \ref{C^0-distance-est-Type-III}.
Therefore, if $D\geq 4$,
\begin{eqnarray*}
\partial_t\zeta_D+\arrowvert\nabla^{g_0(t)}\zeta_D\arrowvert_{g_0(t)}^2\leq 0,\quad \mbox{on $M\times (0,T)$.}
\end{eqnarray*}
Hence the result by integrating the following differential inequality : 
\begin{eqnarray*}
\partial_t\En_D(t)\leq \frac{c(R_0)}{1+t}\En_D(t),\quad t\in(0,T).
\end{eqnarray*}
\end{proof}
We proceed exactly as in \cite{Gri-Upp-Ene-Dav} by estimating $I_R(t)$ from above by $I_{r}(\tau)$ for any radius $0<r<R$ and time $0<\tau<t$. By invoking lemma \ref{lem-est-Ene-evo}, we get :
\begin{eqnarray*}
\frac{1}{(1+t)^c}\int_{\Omega}\arrowvert h\arrowvert^2_{g_0(t)}(x,t)e^{2\zeta_D(x,t)}d\mu_{g_0(t)}(x)\leq \frac{1}{(1+\tau)^c}\int_{\Omega}\arrowvert h\arrowvert^2_{g_0(\tau)}(x,\tau)e^{2\zeta_D(x,\tau)}d\mu_{g_0(\tau)}(x),
\end{eqnarray*}
where we omitted the dependence of $c$ on $R_0$.
In particular,
\begin{eqnarray*}
\frac{1}{(1+t)^c}\int_{\Omega\setminus B_{g_0(T)}(x_0,R)}\arrowvert h(t)\arrowvert^2_{g_0(t)}d\mu_{g_0(t)}&\leq& \frac{1}{(1+\tau)^c}e^{-\frac{2}{D}\cdot\frac{(R-r)^2}{T-\tau}}\int_{B_{g_0(T)}(x_0,r)}\arrowvert h(\tau)\arrowvert^2_{g_0(\tau)}d\mu_{g_0(\tau)}\\
&&+ \frac{1}{(1+\tau)^c}\int_{\Omega\setminus B_{g_0(T)}(x_0,r)}\arrowvert h(\tau)\arrowvert^2_{g_0(\tau)}d\mu_{g_0(\tau)}.
\end{eqnarray*}
Now, by assumption on $I_0(t)$, one has
\begin{eqnarray*}
\int_{B_{g_0(T)}(x_0,r)}\arrowvert h(\tau)\arrowvert^2_{g_0(\tau)}d\mu_{g_0(\tau)}\leq I_0(\tau)\leq\frac{c}{\tau^{n/2}},
\end{eqnarray*}
and by lemma \ref{C^0-distance-est-Type-III}, $\Omega\setminus B_{g_0(T)}(x_0,r)\subset \Omega\setminus B_{g_0(\tau)}(x_0,r)$.
Therefore, by letting $T$ go to $t$, one gets : 
\begin{eqnarray}\label{inequ-int-step-0-induct}
\frac{1}{(1+t)^c}I_R(t)\leq \frac{1}{(1+\tau)^c}I_r(\tau)+\frac{1}{(1+\tau)^c}e^{-\frac{2}{D}\cdot\frac{(R-r)^2}{t-\tau}}
\end{eqnarray}

Consider now the following decreasing sequences of positive radii $(R_k)_{k\geq 0}$ and times $(t_k)_{k\geq 0}$ defined by :
\begin{eqnarray*}
R_k:=\left(\frac{1}{2}+\frac{1}{k+2}\right)R\quad;\quad t_k:=\frac{t}{\gamma^k}\quad;\quad k\geq 0,\quad \gamma>1.
\end{eqnarray*}
Remark that $I_{R_k}(t_k)$ goes to zero as $k$ goes to $+\infty$ since by assumption, $h(x,t)=0$ for any time $t$ and any $x\neq x_0$ and $R_k$ tends to the positive constant $R/2$.  
 Therefore, by iterating inequality (\ref{inequ-int-step-0-induct}), one has : 
 \begin{eqnarray*}
\frac{1}{(1+t)^c}I_R(t)\leq\sum_{k\geq 0}\frac{c}{t_{k+1}^{n/2}(1+t_{k+1})^c}e^{-\frac{2}{D}\cdot\frac{(R_k-R_{k+1})^2}{t_k-t_{k+1}}}.
\end{eqnarray*}
Now, a rather straightforward estimate as done in \cite{Gri-Upp-Ene-Dav} leads to the following expected estimate :
\begin{eqnarray*}
I_R(t)\leq \frac{C_1}{t^{n/2}}e^{-C_2 \frac{2}{D}\cdot\frac{R^2}{t}},
\end{eqnarray*}
where $C_1$ and $C_2$ are two positive constants depending on $\gamma$, $C_0$ and $n$ but independent of $R$ and $t$.
\item The second step is almost word for word step II in \cite{Gri-Upp-Ene-Dav}. Therefore we will be sketchy. For any positive radius $R$, 
\begin{eqnarray*}
E_D(t)&=&\int_{B_{g_0(t)}(x_0,R)}\arrowvert h(t)\arrowvert_{g_0(t)}^2e^{2\xi_D(t)}d\mu_{g_0(t)}\\
&&+\sum_{k\geq 0}\int_{B_{g_0(t)}(x_0,2^{k+1}R)\setminus B_{g_0(t)}(x_0,2^kR)}\arrowvert h(t)\arrowvert_{g_0(t)}^2e^{2\xi_D(t)}d\mu_{g_0(t)}\\
&\leq&\frac{C_0}{t^{n/2}}e^{\frac{2}{D}\cdot\frac{R^2}{t}}+\sum_{k\geq 0} e^{\frac{2}{D}\frac{4^{k+1}R^2}{t}}I_{2^kR}(t)\\
&\leq& \frac{C_0}{t^{n/2}}e^{\frac{2}{D}\cdot\frac{R^2}{t}}+\frac{C_1}{t^{n/2}}\sum_{k\geq 0} e^{\frac{2}{D}\frac{4^{k+1}R^2}{t}} e^{-\frac{2}{D_0}\frac{4^{k}R^2}{t}}\\
&\leq&  \frac{C_0}{t^{n/2}}e^{\frac{2}{D}\cdot\frac{R^2}{t}}+\frac{C_1}{t^{n/2}}\sum_{k\geq 0} e^{-\frac{2}{D}\frac{4^{k}R^2}{t}},
\end{eqnarray*}
if $D\geq 5D_0$ where $D_0$ is a fixed positive constant such that the previous step holds. Now, choose $R$ such that $R^2/Dt=\ln 2$ (this particular value is arbitrary) to get 
\begin{eqnarray*}
E_D(t)\leq \frac{C_3}{t^{n/2}}, \quad t>0,
\end{eqnarray*}
for any $D$ large enough and where $C_3$ is a positive constant independent of $t$.\\

\item The third step consists in proving the expected estimate for $E_D(t)$ for any $D>4$. It suffices to prove the estimate for $D<D_1$ where $D_1$ is a sufficiently large constant such that the previous step holds.
Adapting the proof of lemma \ref{lem-est-Ene-evo} to the weight function 
\begin{eqnarray*}
\bar{\xi}(x,t)=\frac{d_{g_0(t)}^2(x_0,x)}{4(t+T)}, \quad T>0,
\end{eqnarray*}
leads to the following inequality : 
\begin{eqnarray*}
\frac{1}{(1+t)^c}\int_{\Omega}\arrowvert h\arrowvert^2_{g_0(t)}e^{2\bar{\xi}(t)}d\mu_{g_0(t)}\leq \frac{1}{(1+\tau)^c}\int_{\Omega}\arrowvert h\arrowvert^2_{g_0(\tau)}e^{2\bar{\xi}(\tau)}d\mu_{g_0(\tau)},
\end{eqnarray*}
for any positive $0<\tau<t$. Now, we force the appearance of $E_D(t)$ (respectively of $E_{D_1}(\tau)$) on the left hand side (respectively on the right hand side) of the previous inequality. This amounts to solve the following system :
\begin{equation}
\left\{
\begin{array}{rl}
&4(t+T)=Dt,\\
&\\
&4(\tau+T)=D_1\tau,
\end{array}
\right.
\end{equation}
which has the solution 
\begin{eqnarray*}
T:=\frac{D-4}{4}t>0,\quad 0<\tau:=\frac{D-4}{D_1-4}t<t.
\end{eqnarray*}
Hence,
\begin{eqnarray*}
E_D(t)\leq\frac{C(D,C_0)}{t^{n/2}},\quad D>4,\quad t>0.
\end{eqnarray*}

\end{enumerate}

\end{proof}

\begin{rk}
The  second and third steps in the proof of theorem \ref{Dav-Gri-Est-Ene} use in a very essential way the fact that the curvature $\Rm(g_0(t))$ decays like $c/(1+t)$, the uniform boundedness of the curvature for all time alone would not  have been sufficient.
\end{rk}

We are now in a position to prove Gaussian bounds on the Lichnerowicz heat kernel $K_L$ acting on symmetric $2$-tensors : 

\begin{theo}\label{Gausian-est-scal-heat-ker}[Gaussian estimate] 
Let $(M^n,g_0(t))_{t\geq 0}$ be an expanding gradient Ricci soliton with positive curvature operator.

 Then the heat kernel associated to (\ref{scal-heat-equ}) satisfies the following Gaussian estimate : 
\begin{eqnarray*}
&&\|K_L(x,t,y,s)\|_{\Hom(S^2T_y^*M,S^2T_x^*M)}\leq \frac{c(n,V_0,R_0,D)}{(t-s)^{\frac{n}{2}}}\exp\left\{-\frac{d_{g_0(s)}^2(x,y)}{D(t-s)}\right\},\\
&& 0\leq s<t,\quad x,y\in M,\quad D>4.
\end{eqnarray*}
 where $R_0:=\sup_M\arrowvert\Rm(g_0)\arrowvert_{g_0}$ and $V_0:=\AVR(g_0)$.

\end{theo}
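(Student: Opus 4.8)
The plan is to derive the Gaussian upper bound from the on-diagonal bound (Theorem \ref{diag-upper-heat-ker-Lic}) together with the off-diagonal energy decay (Theorem \ref{Dav-Gri-Est-Ene}), following the classical Davies--Grigor'yan integration argument adapted to the Ricci flow background. First I would fix $(y,s)$ and apply Theorem \ref{Dav-Gri-Est-Ene} to the solution $h_{y}(t):=K_L(\cdot,t,y,s)$ of the Lichnerowicz heat equation: the hypothesis $I_0(t)\le C_0/t^{n/2}$ required there is exactly the content of Theorem \ref{diag-upper-heat-ker-Lic} (squaring the pointwise on-diagonal bound and using the $L^1$ bound of Proposition \ref{L^1-bound-heat-kernel-fct} to control the $L^2$-mass of $K_L(\cdot,t,y,s)$ over all of $M$, up to the harmless factor $((1+t)/(1+s))^{c}$). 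This yields
\begin{eqnarray*}
\int_{M}\arrowvert K_L(x,t,y,s)\arrowvert_{g_0(t)}^2\,e^{2\xi_D(x,t-s)}\,d\mu_{g_0(t)}(x)\leq \frac{C(D,n,V_0,R_0)}{(t-s)^{n/2}},\quad D>4,
\end{eqnarray*}
where $\xi_D(x,\sigma)=d_{g_0(s)}^2(x,y)/(D\sigma)$; here one must be slightly careful because Theorem \ref{Dav-Gri-Est-Ene} is stated with the time-dependent distance $d_{g_0(t)}$, but lemma \ref{C^0-distance-est-Type-III} bounds the distortion between $d_{g_0(s)}$ and $d_{g_0(t)}$ for a Type III flow, which costs only a constant in the exponent (absorbable into $D$).

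Next I would run the same argument on the conjugate backward equation \eqref{Lic-heat-equ-conj-ker}: by Remark \ref{rk-dist-bd-back-equ}, Theorem \ref{Dav-Gri-Est-Ene} applies verbatim to $y\mapsto K_L(x,t,y,s)$, giving a weighted $L^2$-integral in the $y$-variable of the same form,
\begin{eqnarray*}
\int_{M}\arrowvert K_L(x,t,y,s)\arrowvert_{g_0(s)}^2\,e^{2\xi_D(y,t-s)}\,d\mu_{g_0(s)}(y)\leq \frac{C(D,n,V_0,R_0)}{(t-s)^{n/2}}.
\end{eqnarray*}
Then I would use the reproducing (semigroup) identity $K_L(x,t,y,s)=\int_M K_L(x,t,z,m)\cdot K_L(z,m,y,s)\,d\mu_{g_0(m)}(z)$ at the midpoint $m=(t+s)/2$, estimate the operator norm of the integrand by Cauchy--Schwarz, and split the exponent with the triangle inequality $d_{g_0(m)}^2(x,y)\le 2d_{g_0(m)}^2(x,z)+2d_{g_0(m)}^2(z,y)$ (again paying a distance-distortion constant via lemma \ref{C^0-distance-est-Type-III}). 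The two weighted $L^2$-bounds above then collapse the integral and produce
\begin{eqnarray*}
\arrowvert K_L(x,t,y,s)\arrowvert_{\Hom}\leq \frac{c}{(t-s)^{n/2}}\exp\left\{-\frac{d_{g_0(s)}^2(x,y)}{D'(t-s)}\right\}
\end{eqnarray*}
for a possibly larger $D'$; since $D>4$ was arbitrary on the input side, one recovers the estimate for every $D>4$ after renaming constants.

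The main obstacle is bookkeeping the time-dependence of the metric: unlike the static Grigor'yan setting, every step — the on-diagonal input, the energy monotonicity of Theorem \ref{Dav-Gri-Est-Ene}, the triangle inequality at the midpoint, the passage between $d_{g_0(s)}$, $d_{g_0(m)}$ and $d_{g_0(t)}$, and the change of reference measure $d\mu_{g_0(t)}\leftrightarrow d\mu_{g_0(s)}$ — introduces factors of the form $((1+t)/(1+s))^{c}$ or $\exp(c\,d^2/(t-s))$ coming from the Type III curvature bound. These are uniformly controlled precisely because the flow is self-similar (expanding soliton), so $\AVR(g_0(t))=\AVR(g_0)$ and $(1+t)\arrowvert\Rm(g_0(t))\arrowvert\le R_0$ are constant in $t$ (see Appendix \ref{sol-equ-sec}), but one has to verify that the accumulated distortion in the exponent stays a fixed fraction of $d_{g_0(s)}^2(x,y)/(t-s)$ rather than degenerating; this is where lemma \ref{C^0-distance-est-Type-III} is used decisively, and it is the only genuinely delicate point. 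The positivity of the curvature operator itself enters only indirectly, through Theorem \ref{comp-heat-ker} which underlies the on-diagonal bound of Theorem \ref{diag-upper-heat-ker-Lic}.
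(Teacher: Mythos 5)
Your proposal is correct and follows essentially the same route as the paper: verify the on-diagonal hypothesis of Theorem \ref{Dav-Gri-Est-Ene} via the ultracontractivity bound of Theorem \ref{diag-upper-heat-ker-Lic} together with the $L^1$ control (Proposition \ref{L^1-bound-heat-kernel-fct} / Theorem \ref{L^p-contractivity-Lic}), apply the Grigor'yan energy estimate forward and (via Remark \ref{rk-dist-bd-back-equ}) to the conjugate backward equation to obtain the two weighted $L^2$-integrals $E^1_D$, $E^2_D$, and then combine them through the semigroup identity at the midpoint $\tau=(t+s)/2$ using Cauchy--Schwarz, the triangle inequality, and the distance-distortion control of Lemma \ref{C^0-distance-est-Type-III}. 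The only technical step you elide is that Theorem \ref{Dav-Gri-Est-Ene} is stated for bounded domains with Dirichlet conditions, so one must pass to an exhaustion $(\Omega_j)_j$ of $M$ and take a limit of the kernels $K_{L,j}$, as the paper does — but that is routine bookkeeping, not a gap in the plan.
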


\begin{proof}

 Define the following integral quantities : 
 \begin{eqnarray*}
&&E^1_D(t):=\int_M\|K_L(x,t,z,s)\|_g^2e^{\frac{2d_{g_0(t)}^2(x,z)}{D(t-s)}}d\mu_{g_0(t)}(x),\quad s<t,\quad z\in M, \quad D>4,\\
&&E^2_D(s):=\int_M\| K_L(z,t,y,s)\|_g^2e^{\frac{2d_{g_0(s)}^2(z,y)}{D(t-s)}}d\mu_{g_0(s)}(y),\quad s<t,\quad z\in M,\quad D>4.\\
\end{eqnarray*}

By considering a smooth exhaustion $(\Omega_j)_j$ of $M$ and the Lichnerowicz heat kernels $(K_{L,j})_j$ solving the corresponding Dirichlet problem on $\Omega_j\times(0,+\infty)$, one can invoke theorems \ref{diag-upper-heat-ker-Lic} and \ref{L^p-contractivity-Lic} ($p=1,+\infty$) to apply theorem \ref{Dav-Gri-Est-Ene} (see remark \ref{rk-dist-bd-back-equ}) to $K_{L,j}$ for each index $j$ in order to get the following uniform integral estimates :
 \begin{eqnarray*}
&&E^1_D(t)\leq \frac{C(n,V_0,R_0,D)}{(t-s)^{n/2}},\quad s<t,\quad D>4,\\
&&E^2_D(s)\leq  \frac{C(n,V_0,R_0,D)}{(t-s)^{n/2}},\quad s<t,\quad D>4.
\end{eqnarray*}

Now, by the semi-group property, the triangular inequality and lemma \ref{C^0-distance-est-Type-III},
\begin{eqnarray*}
&&\|K_L(x,t,y,s)\|_g\\
&=&\left\|\int_MK_L(x,t,z,\tau)\circ K_L(z,\tau,y,s)d\mu_{g_0(\tau)}(z)\right\|_g\\
&\leq&\int_M\|K_L(x,t,z,\tau)\|_{g}e^{\frac{d_{g_0(\tau)}^2(x,z)}{D(t-\tau)}}\|K_L(z,\tau,y,s)\|_{g}e^{\frac{d_{g_0(\tau)}^2(z,y)}{D(\tau-s)}}d\mu_{g_0(\tau)}(z)e^{-\frac{d_{g_0(\tau)}^2(x,y)}{D(t-s)}}\\
&\leq&C(R_0)\int_M\|K_L(x,t,z,\tau)\|_ge^{\frac{d_{g_0(t)}^2(x,z)}{D(t-\tau)}}\|K_L(z,\tau,y,s)\|_ge^{\frac{d_{g_0(s)}^2(z,y)}{D(\tau-s)}}d\mu_{g_0(\tau)}(z)e^{-\frac{d_{g_0(\tau)}^2(x,y)}{D(t-s)}}
\end{eqnarray*}
where $\|\cdot\|_g$ stands for the norm on $\Hom(S^2T_{\cdot}^*M,S^2T_{\cdot}^*M)$ and $$s<\tau:=\frac{t+s}{2}<t.$$
Therefore, by the Cauchy-Schwarz inequality and lemma \ref{C^0-distance-est-Type-III}, we get the following "universal" inequality :
\begin{eqnarray*}
\|K_L(x,t,y,s)\|_g\leq C(R_0) \sqrt{E^1_D(t)E^2_D(\tau)}\exp\left\{-\frac{d_{g_0(\tau)}^2(x,y)}{D(t-s)}\right\},
\end{eqnarray*}
which implies the expected Gaussian estimate : 
\begin{eqnarray*}
\|K_L(x,t,y,s)\|_g&\leq&\frac{c(n,V_0,R_0,D)}{(t-s)^{\frac{n}{2}}}\exp\left\{-\frac{d_{g_0(\tau)}^2(x,y)}{D(t-s)}\right\}\\
&\leq&\frac{c(n,V_0,R_0,D)}{(t-s)^{\frac{n}{2}}}\exp\left\{-\frac{d_{g_0(s)}^2(x,y)}{D(t-s)}\right\},\quad 0\leq s<t,\quad x,y\in M,
\end{eqnarray*}
where we used lemma \ref{C^0-distance-est-Type-III} to estimate from below the distance $d_{g_0(\tau)}(x,y)$ in terms of 
the distance $d_{g_0(s)}(x,y)$.

\end{proof}

\subsection{Miscellaneous results on the bottom of the spectrum of the (weighted) Lichnerowicz operator}
\subsubsection{Links with the weighted Lichnerowicz heat kernel}\label{subsec-lic-hea-ker}
We first investigate the bottom of the spectrum of the weighted Lichnerowicz operator.

\begin{lemma}\label{bottom-spec-weighted-op}
Let $(M^n,g,\nabla^gf)$, $n\geq 2$, be an expanding gradient Ricci soliton with nonnegative sectional curvature. Then the bottom of the $L^2(e^fd\mu(g))$-spectrum of the weighted Lichnerowicz operator is bounded from below by $n/2$, i.e.
\begin{eqnarray*}
\lambda_1(-\Delta_f-2\Rm(g)\ast)\geq \frac{n}{2}.
\end{eqnarray*}
If $n=2$ then $\lambda_1(-\Delta_f-2\Rm(g)\ast)=1=n/2.$ If $n\geq 3$ and $(M^n,g)$ has positive sectional curvature then $\lambda_1(-\Delta_f-2\Rm(g)\ast)>n/2$.
\end{lemma}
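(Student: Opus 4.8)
The plan is to reduce the statement, via a scalar ``ground state'' substitution, to a pointwise algebraic inequality for the curvature term. Write $L:=\Delta_f+2\Rm(g)\ast$ for the weighted Lichnerowicz operator, with $\Delta_f:=\Delta_g+\nabla^g_{\nabla^g f}$ self-adjoint on $L^2(e^fd\mu(g))$; since the curvature is bounded, $-L$ is semi-bounded and
\[
\lambda_1(-L)=\inf\Big\{\int_M\big(|\nabla h|^2-2\langle\Rm(g)\ast h,h\rangle\big)e^fd\mu\ :\ h\in C^\infty_c(S^2T^*M),\ \int_M|h|^2e^fd\mu=1\Big\}.
\]
Two consequences of the soliton equation are needed: tracing $\Hess f=\Ric(g)+\tfrac12 g$ gives $\Delta f=\R_g+\tfrac n2$, whence $\Delta_f(e^{-f})=-(\R_g+\tfrac n2)e^{-f}$; and the potential has quadratic growth, so $\int_Me^{-f}d\mu<+\infty$ and the weighted embedding $W^{1,2}(e^{-f}d\mu)\hookrightarrow L^2(e^{-f}d\mu)$ is compact (cf.\ \cite{Der-Sta-Sge}).

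First I would substitute $h=e^{-f}k$ with $k\in C^\infty_c(S^2T^*M)$ (a bijection), so that $\int_M|h|^2e^fd\mu=\int_M|k|^2e^{-f}d\mu$. Using $\nabla h=e^{-f}(\nabla k-k\otimes\nabla f)$, integrating the cross term $-\langle\nabla f,\nabla|k|^2\rangle$ by parts against $e^{-f}$, and invoking $\Delta f=\R_g+\tfrac n2$, one finds
\[
\int_M\big(|\nabla h|^2-2\langle\Rm(g)\ast h,h\rangle\big)e^fd\mu=\int_M\Big(|\nabla k|^2+\big(\tfrac n2+\R_g\big)|k|^2-2\langle\Rm(g)\ast k,k\rangle\Big)e^{-f}d\mu,
\]
hence $\lambda_1(-L)=\tfrac n2+\inf_k\big(\int_M(|\nabla k|^2+\R_g|k|^2-2\langle\Rm(g)\ast k,k\rangle)e^{-f}d\mu\big)\big/\int_M|k|^2e^{-f}d\mu$. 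Since $|\nabla k|^2\geq0$ and $\R_g\geq0$, the lemma reduces to the pointwise inequality $\R_g|h|^2\geq2\langle\Rm(g)\ast h,h\rangle$ for $h\in S^2T^*_xM$ when the sectional curvature is nonnegative. I would prove it by diagonalising $h=\mathrm{diag}(\mu_1,\dots,\mu_n)$ in a $g$-orthonormal basis: with $K_{il}\geq0$ the sectional curvatures of the planes $e_i\wedge e_l$ (using the sign convention of the paper, i.e.\ $\Rm(g)\ast g=\Ric(g)$), one has $\langle\Rm(g)\ast h,h\rangle=\sum_{i\neq l}K_{il}\mu_i\mu_l$ and $\R_g=\sum_{i\neq l}K_{il}$, so
\[
\R_g|h|^2-2\langle\Rm(g)\ast h,h\rangle=\sum_{i\neq l}K_{il}\Big((\mu_i-\mu_l)^2+\sum_{j\neq i,l}\mu_j^2\Big)\ \geq\ 0,
\]
which already gives $\lambda_1(-L)\geq n/2$ in every dimension.

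For the borderline cases, the relevant identity is $L(e^{-f}g)=e^{-f}\big(2\Ric(g)-(\R_g+\tfrac n2)g\big)$ (from $\Delta_f(e^{-f})=-(\R_g+\tfrac n2)e^{-f}$ and $\Rm(g)\ast g=\Ric(g)$). When $n=2$ a surface is Einstein with $2\Ric(g)=\R_g g$, so the right-hand side is $-e^{-f}g$; as $e^{-f}g\in L^2(e^fd\mu)$ this exhibits $1=n/2$ as an eigenvalue of $-L$, so $\lambda_1(-L)=n/2$. When $n\geq3$ and the sectional curvature is positive, the displayed inequality becomes strict for $h\neq0$ (for any $i\neq l$ the bracket contains $\sum_{j\neq i,l}\mu_j^2$). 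I would then argue by contradiction: if $\lambda_1(-L)=n/2$, a normalised minimising sequence $(k_j)$ for the quotient above has $\int_M|\nabla k_j|^2e^{-f}d\mu\to0$ and $\int_M(\R_g|k_j|^2-2\langle\Rm(g)\ast k_j,k_j\rangle)e^{-f}d\mu\to0$; by the compact embedding a subsequence converges in $L^2(e^{-f}d\mu)$ to a parallel $k_\infty$ with $\int_M|k_\infty|^2e^{-f}d\mu=1$ and $\int_M(\R_g|k_\infty|^2-2\langle\Rm(g)\ast k_\infty,k_\infty\rangle)e^{-f}d\mu=0$, and since $k_\infty$ is parallel and nonzero it is nowhere vanishing, contradicting the strict pointwise inequality. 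Hence $\lambda_1(-L)>n/2$.

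The only genuinely non-formal steps are the algebraic inequality $\R_g|h|^2\geq2\langle\Rm(g)\ast h,h\rangle$ — the tensorial sharpening of $\Ric(g)\leq\tfrac12\R_g g$ under the nonnegative sectional curvature hypothesis — and, in the strict case, the spectral input (compact resolvent, equivalently the compact weighted Sobolev embedding for the Gaussian-type measure $e^{-f}d\mu$) needed to promote pointwise strictness to a genuine spectral gap; this last point is where I expect the main work to lie.
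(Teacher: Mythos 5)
Your proof is correct and follows essentially the paper's route: both hinge on the identity $\Delta_g f=\R_g+\tfrac n2$ and on the pointwise inequality $\R_g|h|^2\geq 2\langle\Rm(g)\ast h,h\rangle$, proved by the very same diagonalisation, and both settle $n=2$ via the explicit eigentensor $e^{-f}g$. The only cosmetic differences are that you derive the $n/2$ lower bound by the ground-state substitution $h=e^{-f}k$ (thereby avoiding the Kato inequality the paper uses to pass from the scalar estimate of \cite{Der-Sta-Sge} to tensors), and for $n\geq 3$ you run a direct minimizing-sequence/compact-embedding argument to extract a nowhere-vanishing parallel limit, where the paper simply invokes the existence of an $L^2$ bottom eigentensor from \cite{Der-Sta-Sge} --- both versions rest on the same compactness input.
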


\begin{rk}
In \cite{Der-Sta-Sge}, we proved that $\lambda_1(-\Delta_f-2\Rm(g)\ast)>0$ if $(M^n,g,\nabla^gf)$ is an expanding gradient Ricci soliton with positive curvature operator. The argument was proved by contradiction using a maximum principle type argument.
\end{rk}

\begin{proof}
By the variational characterization of $\lambda_1(-\Delta_f-2\Rm(g)\ast)$, it suffices to prove that 
\begin{eqnarray*}
\inf_{h\in  L^2(S^2T^*M,d\mu_f(g))\setminus\{0\}}\frac{\int_M\left<-\Delta_fh-2\Rm(g)\ast h,h\right>d\mu_f(g)}{\int_M\arrowvert h\arrowvert^2_gd\mu_f(g)}\geq \frac{n}{2}.
\end{eqnarray*}
By \cite{Der-Sta-Sge}, 
\begin{eqnarray*}
&&\int_M\left<-\Delta_f\phi,\phi\right>d\mu_f(g)\geq \int_M(\Delta_g f)\phi^2d\mu_f(g),\quad\phi\in C^{\infty}_0(M),\\
&&\Delta_gf=\R_g+\frac{n}{2}.
\end{eqnarray*}
Therefore, if $h$ is a symmetric $2$-tensor with compact support, we get by the Kato inequality,
\begin{eqnarray*}
\int_M\left<-(\Delta_f+2\Rm(g)\ast)h,h\right>d\mu_f(g)\geq \int_M \left<\left(\R_g-2\Rm(g)\ast\right)h,h\right>d\mu_f(g)+\frac{n}{2}\int_M\arrowvert h\arrowvert_g^2d\mu_f(g).
\end{eqnarray*}
Hence the result if we prove the pointwise inequality : $\R_g\arrowvert h\arrowvert^2-2\left<\Rm(g)\ast h,h\right>\geq 0$, for any symmetric $2$-tensors. Indeed, let $h\in S^2T^*M$ and $x\in M$. Let $(e_i)_{1\leq i\leq n}$ be an orthonormal base of $(T_xM,g(x))$ diagonalizing $h$, i.e. $h(e_i,e_j)=\lambda_i\delta_{ij}$, with $\lambda_i\in\mathbb{R}$. Then,
\begin{eqnarray*}\label{inequ-point-curv-term}
\R_g\arrowvert h\arrowvert^2-2\left<\Rm(g)\ast h,h\right>&=&\left(2\sum_{i<j}K_{ij}\right)\left(\sum_{k=1}\lambda_k^2\right)-4\sum_{i<j}K_{ij}\lambda_i\lambda_j\\
&=&2\sum_{i<j}K_{ij}\left((\lambda_i-\lambda_j)^2+\sum_{k\neq i,j}\lambda_k^2\right)\\
&\geq &0
\end{eqnarray*}
 where $K_{ij}:=\Rm(g)(e_i,e_j,e_j,e_i)$ denotes the sectional curvature with respect to the metric $g$ of the $2$-plane spanned by $(e_i,e_j)$.
 Now, if $n=2$, then
 \begin{eqnarray*}
(\Delta_f+2\Rm(g)\ast )h=\Delta_f h+\R_gh, \quad h\in S^2T^*M.
\end{eqnarray*}
Hence, if $h:=e^{-f}g$, then $(\Delta_f+2\Rm(g)\ast )h=-h$ and $e^{-f}\in L^2(d\mu_f(g))$.

If $n\geq 3$ and $(M^n,g)$ has positive sectional curvature, then, as shown in \cite{Der-Sta-Sge}, there is some eigentensor $h\in L^2(S^2T^*M,d\mu_f(g))\setminus\{0\}$ associated to the bottom eigenvalue of the spectrum of the weighted Lichnerowicz operator. The previous pointwise inequalities imply easily $\lambda_1(-\Delta_f-2\Rm(g)\ast)>n/2$.

\end{proof}

If $(M^n,g,\nabla^g f)$ is an expanding gradient Ricci soliton, the heat kernel $K_{L,f}$ associated to the weighted Lichnerowicz operator $\Delta_f+2\Rm(g)\ast$ is linked to the (classical) Lichnerowicz heat kernel $K_L$ studied in the previous sections by the following formula :
\begin{eqnarray*}
K_L(x,t,y,0)=(1+t)\phi_{t,x}^*K_{L,f}(\ln(1+t),x,y)e^{f(y)},\quad x,y\in M,\quad t>0,
\end{eqnarray*}
 where $\phi_{t,x}^*$ denotes the pull-back with respect to the $x$ variable by the one parameter family of diffeomorphisms $(\phi_t)_{t>-1}$ generated by $-\nabla^gf/(1+t)$. 
 
 As suggested in the introduction, the kernel $K_{L,f}$ is the Mehler kernel in the case $(M^n,g,\nabla^g f)$ is flat. Therefore, theorem \ref{Gausian-est-scal-heat-ker} enables to prove Mehler bounds (instead of Gaussian) for the heat kernel $K_{L,f}$ in case the expander has positive curvature operator. 

\subsubsection{Hardy inequalities on Ricci expanders}

Regarding remark \ref{rk-contractivity-2}, one can ask when the Lichnerowicz heat flow is contractive on $L^2$. It turns out that it is equivalent to prove that the operator $-\Delta_{g}-2\Rm(g)\ast+\R_{g}/2$ is non negative. We provide a yes answer with the help of a suitable Hardy inequality.

\begin{prop}
Let $(M^n,g,\nabla^{g}f)$ be a normalized expanding gradient Ricci soliton with nonnegative Ricci curvature and positive scalar curvature (i.e. non flat). Then the following Hardy inequality holds :
\begin{eqnarray*}
\left(\frac{n-2}{2}\right)^2\int_{M^n}\left(\frac{\phi}{2\sqrt{f+\mu(g)})}\right)^2d\mu(g)\leq\int_{M^n}\arrowvert\nabla^{g} \phi\arrowvert^2d\mu(g),
\end{eqnarray*}
for any $\phi\in C_0^{\infty}(M^n).$
\end{prop}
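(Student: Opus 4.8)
The plan is to run the classical \emph{ground state substitution} (Agmon--Allegretto--Piepenbrink) with a negative power of the potential as auxiliary function, in exact analogy with the Euclidean proof of Hardy's inequality where one uses $w=|x|^{-(n-2)/2}$; here $|x|^2$ gets replaced by $4\bigl(f+\mu(g)\bigr)$, the natural substitute since for the Gaussian expander $f=|\cdot|^2/4$ and $\mu(g)=0$.

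First I would record the structure equations we need. Set $u:=f+\mu(g)$; since $\mu(g)$ is a constant, $\nabla^g u=\nabla^g f$ and $\Delta_g u=\Delta_g f$. Tracing the soliton equation $\Ric(g)-\Hess f=-g/2$ gives $\Delta_g u=\R_g+\frac{n}{2}$, while the Hamilton--type identity for the normalized potential (Appendix \ref{sol-equ-sec}) reads $\R_g+|\nabla^g f|^2=f+\mu(g)$, i.e. $|\nabla^g u|^2=u-\R_g$. By hypothesis $\R_g>0$ everywhere (equivalently, $(M^n,g)$ is non flat), hence $u=\R_g+|\nabla^g f|^2>0$ on $M$ and $w:=u^{-\alpha}$ is a well-defined, positive, smooth function for any $\alpha>0$.

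Next I would compute, for $w=u^{-\alpha}$ and using the two identities above,
\begin{eqnarray*}
-\frac{\Delta_g w}{w}=\alpha\,\frac{\Delta_g u}{u}-\alpha(\alpha+1)\,\frac{|\nabla^g u|^2}{u^2}=\frac{\alpha\bigl(\frac{n}{2}-1-\alpha\bigr)}{u}+\frac{\alpha\,\R_g}{u}+\frac{\alpha(\alpha+1)\,\R_g}{u^2}.
\end{eqnarray*}
Because $\alpha>0$ and $\R_g>0$, the last two terms are nonnegative, so $-\Delta_g w/w\geq \alpha\bigl(\frac{n}{2}-1-\alpha\bigr)/u$. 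Optimizing the quadratic $\alpha\mapsto\alpha\bigl(\frac{n}{2}-1-\alpha\bigr)$ over $\alpha>0$ — its maximum is $\bigl(\frac{n-2}{4}\bigr)^2=\frac{(n-2)^2}{16}$, attained at $\alpha=\frac{n-2}{4}$ (a legitimate positive choice for $n\geq3$; the case $n=2$ being trivial) — produces the pointwise inequality
\begin{eqnarray*}
-\frac{\Delta_g w}{w}\;\geq\;\Bigl(\frac{n-2}{2}\Bigr)^2\frac{1}{4\bigl(f+\mu(g)\bigr)}\qquad\text{on }M.
\end{eqnarray*}

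Finally, for $\phi\in C_0^\infty(M^n)$ I would write $\phi=wv$ with $v:=\phi/w\in C_0^\infty(M^n)$ (smooth since $w>0$ is smooth), expand $|\nabla^g\phi|^2$, and integrate by parts — there are no boundary terms since $wv^2=\phi^2/w$ is compactly supported and $w$ is smooth — to get the standard identity
\begin{eqnarray*}
\int_{M^n}|\nabla^g\phi|^2\,d\mu(g)=\int_{M^n}w^2|\nabla^g v|^2\,d\mu(g)-\int_{M^n}\frac{\Delta_g w}{w}\,\phi^2\,d\mu(g)\;\geq\;-\int_{M^n}\frac{\Delta_g w}{w}\,\phi^2\,d\mu(g).
\end{eqnarray*}
Combining this with the pointwise bound from the previous step yields the asserted Hardy inequality. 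I expect the only delicate point to be the second step: one must check that the normalization encoded in $\mu(g)$ is precisely the one making $u=f+\mu(g)$ positive and satisfying $|\nabla^g u|^2=u-\R_g$, so that the curvature error terms carry the favorable sign and $-\Delta_g w/w$ dominates the \emph{sharp} Euclidean Hardy weight rather than a strictly smaller one; the ground state substitution itself is routine.
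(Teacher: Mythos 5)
Your proof is correct and is essentially the paper's argument: your ground state $w=u^{-(n-2)/4}$ coincides (up to a multiplicative constant) with the paper's $\psi=\rho^{-(n-2)/2}$, $\rho=2\sqrt{f+\mu(g)}$, your pointwise bound $-\Delta_g w/w\geq\bigl(\tfrac{n-2}{2}\bigr)^2\tfrac{1}{4(f+\mu(g))}$ is the paper's supersolution inequality $\Delta_g\psi\leq-\bigl(\tfrac{n-2}{2}\bigr)^2\psi/\rho^2$, and the final integration by parts is exactly Carron's ground-state substitution used there. The only cosmetic difference is that you optimize over the exponent $\alpha$ directly instead of passing through the distance-like function $\rho$ with $\Delta_g\rho\geq(n-1)/\rho$.
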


\begin{proof}
By the soliton identities of lemma \ref{id-EGS}, observe that if $F:=f+\mu(g)> 0$,
\begin{eqnarray*}
\Delta_{g}(2F^{1/2})&=&\div_{g}\left(\frac{\nabla^{g} f}{\sqrt{F}}\right)\\
&=&\frac{2\Delta_{g}f-\arrowvert\nabla^{g}f\arrowvert^2F^{-1}}{2\sqrt{F}}\\
&=&\frac{2R_{g}+n-1+\R_{g}F^{-1}}{2\sqrt{F}}\\
&\geq&\frac{n-1}{2\sqrt{F}}.
\end{eqnarray*}

Define $\rho:=2\sqrt{F}.$ Then,
\begin{eqnarray*}
\Delta_{g}\rho^{-(\frac{n-2}{2})}&=&-\frac{n-2}{2}\rho^{-\frac{n-2}{2}-1}\Delta_{g}\rho+\frac{n-2}{2}\left(\frac{n-2}{2}+1\right)\arrowvert\nabla^{g}\rho\arrowvert^2\rho^{-\frac{n-2}{2}-2}\\
&\leq&\left(-\frac{(n-1)(n-2)}{2}+\frac{n-2}{2}\left(\frac{n-2}{2}+1\right)\right)\rho^{-\frac{n-2}{2}-2}\\
&\leq&-\left(\frac{n-2}{2}\right)^2\cdot\frac{1}{\rho^{2}}\cdot\rho^{-\frac{n-2}{2}}.
\end{eqnarray*}
Therefore, if $\psi:=\rho^{-\frac{n-2}{2}}$, we have shown that
\begin{eqnarray*}
\Delta_{g}\psi\leq-\left(\frac{n-2}{2}\right)^2\frac{\psi}{\rho^2}.
\end{eqnarray*}
Now, by an argument due to Carron \cite{Car-Ine-Har}, let $\phi\in C_0^{\infty}(M)$, and note that
\begin{eqnarray*}
\int_{M^n}\arrowvert\nabla^{g}(\psi\phi)\arrowvert^2d\mu(g)&=&-\int_{M^n}\phi\psi\Delta_{g}(\psi\phi)d\mu(g)\\
&=&\int_{M^n}\psi^2\arrowvert\nabla \phi\arrowvert^2d\mu(g)-\int_{M^n}\phi^2\psi\Delta_{g}\psi d\mu(g)\\
&\geq&\left( \frac{n-2}{2}\right)^2\int_{M^n}\left(\frac{\phi\psi}{\rho}\right)^2d\mu(g).
\end{eqnarray*}

\end{proof}

\begin{coro}
Let $(M^n,g,\nabla^{g}f)$ be a normalized expanding gradient Ricci soliton with nonnegative Ricci curvature. 
Then there exists a positive constant $c(n)$ such that if 
\begin{eqnarray*}
\sup_M(f+\mu(g))\arrowvert\Rm(g)\arrowvert\leq c(n),
\end{eqnarray*}
 i.e. if the aperture of the asymptotic cone is large enough, then
\begin{eqnarray*}
\lambda_1(-\Delta_{g}-2\Rm(g)\ast+\R_{g}/2)\geq 0.
\end{eqnarray*}
\end{coro}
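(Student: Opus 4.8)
The plan is to bound the Rayleigh quotient of $-\Delta_g-2\Rm(g)\ast+\R_g/2$ from below by zero, combining the Kato inequality with the Hardy inequality of the preceding proposition. If $(M^n,g)$ is flat there is nothing to prove, since then $\Rm(g)\equiv 0$ and the operator is just $-\Delta_g\geq 0$ (and $c(n)=0$ is admissible when $n=2$, in which case the hypothesis $\sup_M(f+\mu(g))\arrowvert\Rm(g)\arrowvert\leq c(n)$ forces flatness anyway). So assume the soliton is non flat: then $f+\mu(g)>0$ on $M$ (as recalled in the proof of the $L^\infty$ estimate of Theorem \ref{Est-Hom-lin-equ}), $\R_g>0$ by the strong maximum principle, and the preceding proposition applies. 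By the variational characterization of $\lambda_1$ it then suffices to prove, for every $h\in C_0^\infty(S^2T^*M)$,
\[
\int_M\left<\Big(-\Delta_g-2\Rm(g)\ast+\tfrac{\R_g}{2}\Big)h,h\right>d\mu(g)\ \geq\ 0.
\]

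First I would integrate by parts and discard the nonnegative zeroth order term: since $\Ric(g)\geq 0$ forces $\R_g\geq 0$,
\[
\int_M\left<\Big(-\Delta_g-2\Rm(g)\ast+\tfrac{\R_g}{2}\Big)h,h\right>d\mu(g)\ \geq\ \int_M\Big(\arrowvert\nabla^g h\arrowvert_g^2-2\left<\Rm(g)\ast h,h\right>\Big)d\mu(g).
\]
Next, the Kato inequality $\arrowvert\nabla^g h\arrowvert_g\geq\arrowvert\nabla^g\arrowvert h\arrowvert\arrowvert$ (valid almost everywhere on $M$) together with the fact that $\arrowvert h\arrowvert$ is a compactly supported nonnegative Lipschitz function allows, after the routine density argument, to apply the Hardy inequality of the preceding proposition to $\phi=\arrowvert h\arrowvert$:
\[
\int_M\arrowvert\nabla^g h\arrowvert_g^2\,d\mu(g)\ \geq\ \int_M\arrowvert\nabla^g\arrowvert h\arrowvert\arrowvert^2\,d\mu(g)\ \geq\ \frac{(n-2)^2}{16}\int_M\frac{\arrowvert h\arrowvert^2}{f+\mu(g)}\,d\mu(g).
\]

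Finally, using the pointwise estimate $\arrowvert\left<\Rm(g)\ast h,h\right>\arrowvert\leq\tilde c(n)\,\arrowvert\Rm(g)\arrowvert\,\arrowvert h\arrowvert^2$ for some dimensional constant $\tilde c(n)$, the two previous displays combine to give
\[
\int_M\left<\Big(-\Delta_g-2\Rm(g)\ast+\tfrac{\R_g}{2}\Big)h,h\right>d\mu(g)\ \geq\ \int_M\left(\frac{(n-2)^2}{16(f+\mu(g))}-2\tilde c(n)\,\arrowvert\Rm(g)\arrowvert\right)\arrowvert h\arrowvert^2\,d\mu(g),
\]
so setting $c(n):=\dfrac{(n-2)^2}{32\,\tilde c(n)}$ the hypothesis $\sup_M(f+\mu(g))\arrowvert\Rm(g)\arrowvert\leq c(n)$ makes the integrand nonnegative pointwise (using $f+\mu(g)>0$), whence $\lambda_1(-\Delta_g-2\Rm(g)\ast+\R_g/2)\geq 0$. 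Every step is soft; the only points deserving a word of care are the density argument needed to apply the Hardy inequality to the merely Lipschitz function $\arrowvert h\arrowvert$, and checking that a non flat expanding soliton with $\Ric(g)\geq 0$ satisfies the hypotheses of the preceding proposition ($f+\mu(g)>0$, $\Ric(g)\geq 0$, $\R_g>0$). I expect no genuine obstacle.
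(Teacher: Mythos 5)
Your argument is correct and is exactly the proof the paper leaves implicit: the corollary is meant to follow immediately from the Hardy inequality, and your chain Kato inequality $\Rightarrow$ Hardy inequality applied to $\arrowvert h\arrowvert$ $\Rightarrow$ pointwise absorption of the curvature term under the smallness hypothesis is the intended route. The reduction to the non flat case, the remark that $f+\mu(g)>0$ there, and the density step needed to apply the Hardy inequality to the Lipschitz function $\arrowvert h\arrowvert$ are all handled appropriately. You are also right to flag that the constant you produce, $c(n)=(n-2)^2/(32\,\tilde c(n))$, degenerates to $0$ when $n=2$, so the corollary is only nontrivial (and $c(n)$ only positive, as the statement claims) for $n\geq 3$; this mild imprecision is in the paper's statement, not in your proof.
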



\subsection{Bounds on the covariant derivatives of the heat kernel}
Once one has pointwise estimates on solutions of heat equations type, pointwise estimates on the covariant derivatives follows automatically as soon as the potential is controlled pointwisely. 
\begin{prop}[Covariant derivatives bounds]\label{cov-der-bound}
Let $(M^n,g_0(t))_{t\geq 0}$ be a Type III solution of the Ricci flow with nonnegative Ricci curvature, i.e. 
\begin{eqnarray*}
(1+t)\arrowvert\Rm(g_0(t))\arrowvert_{g_0(t)}\leq R_0,\quad \Ric(g_0(t))\geq 0,\quad  t\geq0. 
\end{eqnarray*}
Let $h$ be a tensor solution to the following system : 
\begin{eqnarray}
\label{sys-heat-equ-cov-der}
\partial_th=\Delta_{g_0(t)}h+\Rm(g_0(t))\ast h,
\end{eqnarray}
on $M^n\times(0,+\infty)$. Then, for any integer $k\geq 0$,
\begin{eqnarray*}
\sup_{P(x,t,\theta r)\setminus P(x,t,\theta r/2)}s^{k}\arrowvert \nabla^{g_0(s),k}h\arrowvert^2_{g_0(s)}\leq c(n,k,R_0,\theta)\left(1+\frac{t}{r^2}+\frac{t}{t-r^2}\right)\sup_{P(x,t,r)}\arrowvert h\arrowvert^2_{g_0(s)},
\end{eqnarray*}
where $(x,t)\in M\times(0,+\infty)$, and $r$ is such that $t>r^2>0$.
\end{prop}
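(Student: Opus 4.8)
The plan is to run a local parabolic De Giorgi--Nash--Moser iteration in the spirit of Proposition \ref{L^1-mean-inequ}, applied not to $|\nabla^{g_0(s),k}h|^2$ itself but to a weighted combination
\[
\Phi_k:=\sum_{j=0}^{k}A_j\,s^{j}\,|\nabla^{g_0(s),j}h|^2_{g_0(s)}
\]
of all intermediate covariant derivatives, and to bound the resulting space--time integral of $\Phi_k$ by $\sup|h|^2$ through Caccioppoli-type energy estimates. Both sides of the asserted inequality are invariant under the parabolic rescaling $g_{0,\rho}(t):=\rho^{-2}g_0(\rho^2 t)$, $h_\rho(t):=h(\rho^2 t)$, so it suffices to work with explicit cut-offs and keep track that every constant depends only on $n,k,R_0,\theta$.

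First I would derive the evolution equations. Commuting $\partial_s$ with $\nabla^{g_0(s),j}$ along the Ricci flow (recall $\partial_s\Gamma(g_0(s))=g_0(s)^{-1}\ast\nabla^{g_0(s)}\Ric(g_0(s))$) and using the Bochner commutation of $\nabla^{g_0(s)}$ with $\Delta_{g_0(s)}$, equation (\ref{sys-heat-equ-cov-der}) gives $\partial_s(\nabla^{g_0(s),j}h)=\Delta_{g_0(s)}(\nabla^{g_0(s),j}h)+\sum_{a=0}^{j}\nabla^{g_0(s),a}\Rm(g_0(s))\ast\nabla^{g_0(s),j-a}h$, hence, absorbing the evolution of the metric into the $a=0$ term,
\[
\left(\partial_s-\Delta_{g_0(s)}\right)|\nabla^{g_0(s),j}h|^2\leq-2|\nabla^{g_0(s),j+1}h|^2+c(n,j)\sum_{a=0}^{j}|\nabla^{g_0(s),a}\Rm|\,|\nabla^{g_0(s),j-a}h|\,|\nabla^{g_0(s),j}h|.
\]
By Shi's local derivative estimates the Type III hypothesis $(1+s)|\Rm(g_0(s))|\leq R_0$ upgrades to $|\nabla^{g_0(s),a}\Rm(g_0(s))|_{g_0(s)}\leq c(n,a,R_0)(1+s)^{-1-a/2}$; plugging this in, applying Young's inequality, and using $s^{a/2}(1+s)^{-a/2}\leq1$, every cross term in the $s^{j}$-weighted inequality is dominated by $c(n,j,R_0)(1+s)^{-1}\sum_{i\leq j}s^{i}|\nabla^{g_0(s),i}h|^2$. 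Consequently $\left(\partial_s-\Delta_{g_0(s)}\right)(s^{j}|\nabla^{j}h|^2)\leq j\,s^{j-1}|\nabla^{j}h|^2-2s^{j}|\nabla^{j+1}h|^2+c(1+s)^{-1}\sum_{i\leq j}s^{i}|\nabla^{i}h|^2$; the only dangerous terms are the $j\,s^{j-1}|\nabla^{j}h|^2$, and they are killed by setting $A_k=1$ and choosing $A_{j-1}\geq (j/2)A_j$ recursively, so that $-2A_{j-1}s^{j-1}|\nabla^{j}h|^2$ absorbs $A_j\,j\,s^{j-1}|\nabla^{j}h|^2$ after reindexing. With the $A_j=A_j(n,k,R_0)$ so chosen one obtains $\left(\partial_s-\Delta_{g_0(s)}\right)\Phi_k\leq c(R_0)(1+s)^{-1}\Phi_k$, and then, exactly as in the proofs of Proposition \ref{diag-upper-heat-ker} and of Theorem \ref{Est-Hom-lin-equ}, the reweighted quantity $w:=\big(\tfrac{1+t-r^2}{1+s}\big)^{c(R_0)}\Phi_k\geq0$ is an honest subsolution $\left(\partial_s-\Delta_{g_0(s)}\right)w\leq0$ on $P(x,t,r)$, with $w\leq\Phi_k$ there.

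Next I would apply the $L^1$ mean value inequality of Proposition \ref{L^1-mean-inequ} (or its purely local version, whose constant depends only on $n$ since doubling and the Poincare inequality are available for $\Ric\geq0$, so that no non-collapsing is needed) to $w$, with base point $(x,t)$ and radii chosen so that the inner cylinder contains the annulus $P(x,t,\theta r)\setminus P(x,t,\theta r/2)$; this yields $\sup_{P(x,t,\theta r)\setminus P(x,t,\theta r/2)}\Phi_k\leq\big(\tfrac{1+t}{1+t-r^2}\big)^{c(R_0)}\,\tfrac{c(n,\theta,R_0)}{r^{n+2}}\int_{P(x,t,r')}\Phi_k\,d\mu_{g_0(s)}\,ds$ for some $r'$ with $\theta r<r'<r$. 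It then remains to bound $\int_{P(x,t,r')}\Phi_k$. Testing (\ref{sys-heat-equ-cov-der}) and its differentiated versions against $\psi^2\nabla^{g_0(s),j}h$ with space--time cut-offs $\psi$, using Lemma \ref{C^0-distance-est-Type-III} to get $|\nabla^{g_0(s)}\psi|\leq c/r$ and $|\partial_s\psi|\leq c/r^2$ uniformly in time, integrating by parts and iterating over $j=0,\dots,k$ with nested cut-offs (the measure term $\partial_s d\mu_{g_0(s)}=-\R_{g_0(s)}d\mu_{g_0(s)}\leq0$ has the favourable sign, and the curvature-derivative terms land at strictly lower order and are absorbed), one gets $\int_{P(x,t,r')}\Phi_k\leq c(n,k,R_0)\big(1+\tfrac{t}{r^2}+\tfrac{t}{t-r^2}\big)\,r^{n+2}\sup_{P(x,t,r)}|h|^2$: here $|P(x,t,2r)|\leq c(n)r^{n+2}$ by Bishop--Gromov ($\Ric\geq0$), the summand $t/r^2$ records $s\leq t$ against the $1/r^2$ cost of the cut-off derivatives, and $t/(t-r^2)$ records both the reweighting factor $(1+t)/(1+t-r^2)$ and the cost of restoring the $s^{j}$-weights on the smallest cylinder, where $s\geq t-r^2$. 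Combining the last two displays produces the claimed estimate.

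The substance of the proof lies in the bookkeeping, not in any new idea. Beyond (i) verifying that the recursion for the $A_j$ closes — which uses both the exact cancellation between $-2A_{j-1}s^{j-1}|\nabla^{j}h|^2$ and $A_j\,j\,s^{j-1}|\nabla^{j}h|^2$ and the fact that the extra decay $(1+s)^{-a/2}$ of $|\nabla^{a}\Rm|$ from Shi's estimates is exactly what makes the $s^{j}$-weighted curvature cross terms subordinate — the delicate point is arranging the nested cut-offs and the iteration so that the geometric factor comes out in the stated form $1+t/r^2+t/(t-r^2)$ and so that every constant is independent of $x,t,r$ (via the parabolic rescaling), while controlling all distance distortions through Lemma \ref{C^0-distance-est-Type-III}. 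The analytic ingredients (local Sobolev/Moser iteration, Caccioppoli estimates) are otherwise standard once the subsolution $\Phi_k$ has been isolated.
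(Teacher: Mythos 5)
Your argument is correct in substance but takes a genuinely different route from the paper's. The paper proves the case $k=1$ (and asserts the rest is similar) by a pointwise, Shi-type maximum principle: it sets $u_0:=\arrowvert h\arrowvert^2$, $u_1:=t\arrowvert\nabla h\arrowvert^2$, forms the product $u_1(u_0+a)$ with $a\simeq\sup_{P(x,t,r)}u_0$, exploits the good term $-2u_1^2/t$ coming from the evolution of $u_0$ to produce the superlinear term $-(u_1(u_0+a))^2/(a^2t)$, and evaluates the differential inequality at an interior maximum of $\psi\, u_1(u_0+a)$ to get a quadratic inequality for $S:=\sup\psi\,u_1(u_0+a)$, whence the stated bound with the factor $1+t/r^2+t/(t-r^2)$ to the first power. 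You instead isolate the subsolution $\Phi_k=\sum_jA_js^j\arrowvert\nabla^jh\arrowvert^2$ (your recursion $A_{j-1}\geq(j/2)A_j$ correctly kills the $j\,s^{j-1}\arrowvert\nabla^jh\arrowvert^2$ terms against $-2A_{j-1}s^{j-1}\arrowvert\nabla^jh\arrowvert^2$, and the Shi bounds $\arrowvert\nabla^a\Rm\arrowvert\leq c\,s^{-a/2}(1+s)^{-1}$ do make the weighted curvature cross terms subordinate) and then run Proposition \ref{L^1-mean-inequ} plus iterated Caccioppoli estimates. Your scheme buys three things: it handles all $k$ uniformly; it avoids the point, glossed over in the paper, of justifying that $\psi\,u_1(u_0+a)$ attains its maximum on a noncompact manifold; and it stays inside the integral toolbox of Proposition \ref{L^1-mean-inequ} already built for Section \ref{sec-hea-ker-est}. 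What it costs is the sharp form of the constant: the Gronwall reweighting $\left((1+t)/(1+t-r^2)\right)^{c(R_0)}$ needed to turn $\Phi_k$ into an honest subsolution, together with the iterated energy estimates (each level of which costs another factor of order $t/r^2+t/(t-r^2)$), yield $\left(1+t/r^2+t/(t-r^2)\right)^{N(n,k,R_0)}$ rather than the first power in the statement. Since the proposition is only ever invoked with $r^2\leq t/2$ (e.g.\ $r^2=(t-s)/2$ in the kernel estimates of Section \ref{sec-est-inhom-equ}), where this factor is universally bounded, the discrepancy is harmless for the paper, but you should either state the weaker conclusion or remark that it suffices. One further small point: you should quantify the mean value inequality by the parabolic volume $\arrowvert P(x,t,r)\arrowvert$ rather than by $r^{n+2}$ if you truly want to avoid the non-collapsing hypothesis, so that the volume factors cancel against those produced by the energy estimates via Bishop--Gromov; as written, dividing by $r^{n+2}$ reintroduces a dependence on $\AVR(g_0)$, which the paper's pointwise proof genuinely avoids.
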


\begin{proof}
The proof is in the spirit of Shi's estimates \cite{Shi-Def}. Again, we have to pay attention to the time dependence of the constants. We prove the case $k=1$, the other cases can be proved similarly. We compute the evolution of the first covariant derivative of $h$ as in the proof of theorem \ref{Est-Hom-lin-equ}: 
\begin{eqnarray*}
\partial_t\arrowvert\nabla^{g_0(t)}h\arrowvert^2_{g_0(t)}&\leq&\Delta_{g_0(t)}\arrowvert\nabla^{g_0(t)}h\arrowvert^2_{g_0(t)}-2\arrowvert\nabla^{g_0(t),2}h\arrowvert_{g_0(t)}\\
&&+c(n)\arrowvert\nabla^{g_0(t)}\Rm(g_0(t))\arrowvert_{g_0(t)}\arrowvert h\arrowvert_{g_0(t)}\arrowvert\nabla^{g_0(t)}h\arrowvert_{g_0(t)}\\
&&+c(n)\arrowvert\Rm(g_0(t))\arrowvert_{g_0(t)}\arrowvert\nabla^{g_0(t)}h\arrowvert_{g_0(t)}^2,\\
\partial_t\left(t\arrowvert\nabla^{g_0(t)}h\arrowvert^2_{g_0(t)}\right)&\leq&\Delta_{g_0(t)}\left(t\arrowvert\nabla^{g_0(t)}h\arrowvert^2_{g_0(t)}\right)-2t\arrowvert\nabla^{g_0(t),2}h\arrowvert_{g_0(t)}\\
&&+\frac{c(n,R_0)}{1+t}\left(\arrowvert h\arrowvert^2_{g_0(t)}+t\arrowvert\nabla^{g_0(t)}h\arrowvert^2_{g_0(t)}\right)\\
&&+\arrowvert\nabla^{g_0(t)}h\arrowvert^2_{g_0(t)},
\end{eqnarray*}
where we used $(1+t)^{3/2}\arrowvert\nabla^{g_0(t)}\Rm(g_0(t))\arrowvert_{g_0(t)}\leq c(n,R_0)$ for $t\geq 0$.
Define $u_0:=\arrowvert h\arrowvert^2_{g_0(t)}$, $u_1:=t\arrowvert\nabla^{g_0(t)}h\arrowvert_{g_0(t)}^2$, for $t\geq 0$.
Then,
\begin{eqnarray*}
&&\partial_tu_0\leq \Delta_{g_0(t)}u_0-2\frac{u_1}{t}+\frac{c(n,R_0)}{1+t}u_0\\
&&\partial_tu_1\leq \Delta_{g_0(t)}u_1-2t\arrowvert\nabla^{g_0(t),2}h\arrowvert_{g_0(t)}^2+\frac{u_1}{t}+\frac{c(n,R_0)}{1+t}(u_0+u_1).
\end{eqnarray*}
Consider now the function $u_1(u_0+a)$ where $a$ is a positive constant to be defined later : 
\begin{eqnarray*}
&&\partial_t(u_1(u_0+a))\leq \Delta_{g_0(t)}(u_1(u_0+a))-2t\arrowvert\nabla^{g_0(t),2}h\arrowvert_{g_0(t)}^2(u_0+a)\\
&&+2\arrowvert\nabla^{g_0(t)}u_0\arrowvert_{g_0(t)}\arrowvert\nabla^{g_0(t)}u_1\arrowvert_{g_0(t)}+\frac{u_1(u_0+a)}{t}-2\frac{u_1^2}{t}+\frac{c(n,R_0)}{1+t}(u_1(u_0+a)+a^2)\\
&&\leq\Delta_{g_0(t)}(u_1(u_0+a))-2t\arrowvert\nabla^{g_0(t),2}h\arrowvert_{g_0(t)}^2(u_0+a)\\
&&+8u_0^{1/2}u_1\arrowvert\nabla^{g_0(t),2}h\arrowvert_{g_0(t)}+\frac{u_1(u_0+a)}{t}-2\frac{u_1^2}{t}+\frac{c(n,R_0)}{1+t}(u_1(u_0+a)+a^2)\\
&&\leq\Delta_{g_0(t)}(u_1(u_0+a))+\frac{u_1(u_0+a)}{t}-\frac{u_1^2}{t}+\frac{c(n,R_0)}{1+t}(u_1(u_0+a)+a^2)\\
&&\leq\Delta_{g_0(t)}(u_1(u_0+a))-\frac{((u_1(u_0+a))^2}{a^2t}+\frac{c(n,R_0)}{t}(u_1(u_0+a)+a^2),
\end{eqnarray*}
if $a= c\sup_{P(x_0,t_0,r_0)}u_0$, $x_0\in M$, $t_0>0$, $r_0^2>0$ being fixed such that $t_0-r_0^2>0$, where $c$ is a positive "universal" constant. Consider the following cut-off function $\psi$ as in the proof of theorem \ref{Est-Hom-lin-equ} : choose two smooth functions $\phi:\mathbb{R}_+\rightarrow[0,1]$ and $\eta:\mathbb{R}_+\rightarrow[0,1]$ such that
\begin{eqnarray*}
&& \supp(\phi)\subset [0,r],\quad\phi\equiv 1\quad\mbox{in $[0,\theta r]$},\quad \phi\equiv 0\quad\mbox{in $[r,+\infty)$},\\
 &&-\frac{c}{r(1-\theta)}\leq \phi'\leq 0,\quad\frac{(\phi')^2}{\phi}\leq \frac{c}{((1-\theta)r)^2},\quad \phi''\geq -\frac{c}{((1-\theta)r)^2},\\
&& \supp(\eta)\subset [t-r^2,t],\quad\eta\equiv 1\quad\mbox{in $[t-(\theta r)^2,t-((\theta r)/2)^2]$},\\
&& \eta\equiv 0\quad\mbox{in $(0,t-r^2]\cup [t,+\infty)$},\quad  \arrowvert\eta'\arrowvert\leq \frac{c(\theta)}{r^2}.
\end{eqnarray*}
Define $\psi(y,s):=\phi(d_{g_0(s)}(x,y))\eta(s)$, for $(y,s)\in M\times(0,+\infty)$. Then, the function $\psi u_1(u_0+a)$ attains its maximum on $M\times(0,+\infty)$ at some space-time point $(x_1,t_1)$.  Now, at such a space-time point, $\psi u_1(u_0+a)$ satisfies : 
\begin{eqnarray*}
0&=&\psi\partial_t(\psi u_1(u_0+a))\\
&\leq& \psi\Delta_{g_0(t)}(\psi u_1(u_0+a))-\frac{((\psi u_1(u_0+a))^2}{a^2t}+\frac{c(n,R_0)}{t_1}(\psi u_1(u_0+a)+a^2)\\
&&+(\partial_t-\Delta_{g_0(t)})\psi\left(\psi u_1(u_0+a)\right)-2\psi<\nabla^{g_0(t)}\psi,\nabla^{g_0(t)}(u_1(u_0+a))>_{g_0(t)}\\
&\leq&-\frac{((\psi u_1(u_0+a))^2}{a^2t_1}+\frac{c(n,R_0)}{t_1}(\psi u_1(u_0+a)+a^2)+2\frac{\arrowvert\nabla^{g_0(t_1)}\psi\arrowvert_{g_0(t_1)}^2}{\psi}(\psi u_1(u_0+a))\\
&&+(\partial_t-\Delta_{g_0(t_1)})\psi\left(\psi u_1(u_0+a)\right).
\end{eqnarray*}
Let us estimate $(\partial_t-\Delta_{g_0(t_1)})\psi$ as follows : 
\begin{eqnarray*}
\partial_t\psi&=&\eta'\phi+\eta\phi'\partial_td_{g_0(\cdot)}(x,\cdot)\leq c(\theta)\left(\frac{1}{r^2}+\frac{1}{\sqrt{t-r^2}r}\right)\\
\Delta_{g_0(s)}\psi&=&\eta\phi'\Delta_{g_0(s)}(d_{g_0(s)}(x,\cdot))+\eta\phi''\arrowvert\nabla^{g_0(s)}d_{g_0(\cdot)}\arrowvert^2_{g_0(s)},\\
&\geq&-\frac{c(n)}{d_{g_0(s)}(x,\cdot)}\cdot\frac{1}{(1-\theta)r}-\frac{c}{((1-\theta)r)^2}\\
&\geq&-c(n)\left(\frac{1}{\theta (1-\theta)r^2}+\frac{1}{((1-\theta)r)^2}\right)\geq -\frac{c(n,\theta)}{r^2},
\end{eqnarray*}
where we used the estimate 
\begin{eqnarray*}
\Delta_{g_0(s)}d_{g_0(s)}(x,\cdot)\leq\frac{n-1}{d_{g_0(s)}(x,\cdot)},
\end{eqnarray*}
that holds outside the cutlocus of $x$ since $\Ric(g_0(s))\geq 0$ together with lemma \ref{C^0-distance-est-Type-III}.
Therefore, at $(x_1,t_1)$, the maximum of $\psi u_1(u_0+a)$ denoted by $S$ satisfies : 
\begin{eqnarray*}
S^2\leq c(n,R_0,\theta)a^2\left(1+\frac{t}{r^2}+\frac{t}{t-r^2}\right)S+c(n,R_0)a^4,
\end{eqnarray*}
which implies 
\begin{eqnarray*}
a\sup_{P(x,t,\theta r)\setminus P(x,t,\theta r/2)}u_1\leq S=\sup_{M\times (0,+\infty)}\psi u_1(u_0+a)\leq c(n,R_0,\theta)a^2\left(1+\frac{t}{r^2}+\frac{t}{t-r^2}\right),
\end{eqnarray*}
that is,
\begin{eqnarray*}
\sup_{P(x,t,\theta r)\setminus P(x,t,\theta r/2)}s\arrowvert\nabla^{g_0(s)}h\arrowvert^2_{g_0(s)}\leq c(n,R_0,\theta)\left(1+\frac{t}{r^2}+\frac{t}{t-r^2}\right)\sup_{P(x,t,r)}\arrowvert h\arrowvert^2_{g_0(s)},
\end{eqnarray*}
which is the desired estimate.
\end{proof}
\section{Estimates for the inhomogeneous equation}\label{sec-est-inhom-equ}
We finish the proof of theorem \ref{main-theo} by proving the following theorem :
\begin{theo}
Let $R$ be in $Y$. Then any solution to $\left(\partial_t-L_t\right)h=R$ with $h(0)=h_0\in L^{\infty}(M,g_0)$  of the form $h=K_L\ast R$, i.e.
\begin{eqnarray*}
h(x,t)=\int_0^t\int_M<K_L(x,t,y,s),R(y,s)>_{g_0}d\mu_{g_0(s)}ds,
\end{eqnarray*}

is in $X$ and satisfies the following estimate : 
\begin{eqnarray*}
\| h\|_X\leq c(\| h_0\|_{L^{\infty}(M,g_0)}+\| R\|_Y),
\end{eqnarray*}
for some positive uniform constant $c$.
\end{theo}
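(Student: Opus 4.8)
The plan is to reduce, by linearity, to the case $h_0=0$. Writing the solution as $h=v+K_L\ast R$, where $v$ solves the homogeneous equation $\partial_tv=L_tv$ with $v(0)=h_0$, Theorem~\ref{Est-Hom-lin-equ} already gives $\|v\|_X\le c\|h_0\|_{L^{\infty}(M,g_0)}$, so it remains to prove $\|K_L\ast R\|_X\le c\|R\|_Y$. Since $Y=Y^0+\nabla Y^1$, I would further split $R=R^0+\nabla^{g_0(\cdot)}R^1$ with $\|R^0\|_{Y^0}+\|R^1\|_{Y^1}\le 2\|R\|_Y$ and treat the two contributions separately; for the second, an integration by parts in the $y$--variable (using that $y\mapsto K_L(x,t,y,s)$ solves the conjugate backward equation \eqref{Lic-heat-equ-conj-ker}) moves the derivative onto the kernel. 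Combining the Gaussian estimate of Theorem~\ref{Gausian-est-scal-heat-ker}---valid for the conjugate equation as well by Remark~\ref{rk-dist-bd-back-equ}---with the local gradient estimate of Proposition~\ref{cov-der-bound} and its backward analogue, applied on a parabolic cylinder of radius comparable to $\sqrt{t-s}$ whose final time slice is kept away from $s$, I would first record the gradient kernel bound
\[
\|\nabla^{g_0(s)}_yK_L(x,t,y,s)\|_g\le\frac{c(n,V_0,R_0,D)}{(t-s)^{\frac{n+1}{2}}}\exp\left\{-\frac{d_{g_0(s)}^2(x,y)}{D(t-s)}\right\},\qquad D>4,
\]
and likewise for $\nabla^{g_0(t)}_x$.

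For the $L^{\infty}$ seminorm of $\|\cdot\|_X$, I would estimate the $R^0$--contribution pointwise by
\[
|h(x,t)|\le c\int_0^t(t-s)^{-\frac n2}\int_Me^{-d_{g_0(s)}^2(x,y)/(D(t-s))}\,|R^0(y,s)|\,d\mu_{g_0(s)}(y)\,ds,
\]
decompose the $y$--integral into $B_{g_0(s)}(x,\sqrt{t-s})$ and dyadic annuli, and use the Bishop--Gromov volume bounds of Appendix~\ref{sol-equ-sec} together with the $L^1$ (average) part of $\|\cdot\|_{Y^0}$ on the ball and its $L^{(n+4)/2}$ annular part to control local averages and absorb the Gaussian tails; summing the resulting geometric series over the dyadic time scales $t-s\in(2^{-k-1}t,2^{-k}t]$ and using the scaling invariance of the norms under $g_{0,R}(t)=R^{-2}g_0(R^2t)$ to normalize, one obtains $\sup_t\|h(t)\|_{L^{\infty}(M,g_0(t))}\le c\|R^0\|_{Y^0}$, exactly as in Koch--Lamm \cite{Koc-Lam-Rou}. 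The $\nabla R^1$--contribution is handled in the same way, the extra factor $(t-s)^{-1/2}$ coming from $\nabla_yK_L$ being compensated by the one--derivative--weaker norm $\|\cdot\|_{Y^1}$.

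For the first--order seminorms of $X$ I would avoid differentiating the kernel and instead rerun the weighted energy estimate from the $L^2$ part of the proof of Theorem~\ref{Est-Hom-lin-equ}, now retaining the inhomogeneity: multiplying $(\partial_t-L_t)h=R^0+\nabla R^1$ by $\psi_{x,R}^2h$ and integrating by parts, the $R^0$--term is controlled by $\|R^0\|_{Y^0}$ and the $L^{\infty}$ bound just obtained, while the $\nabla R^1$--term, after one further integration by parts, produces $\int\langle R^1,\nabla^{g_0}(\psi^2h)\rangle$ whose $\nabla h$--part is absorbed into the left--hand side by Young's inequality and whose $\nabla\psi$--part is again controlled by $\|R^1\|_{Y^1}$; this gives $R\nor\nabla h\|_{L^2(P(x,R))}\le c\|R\|_Y$. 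For $\nor\sqrt{t}\nabla h\|_{L^{n+4}(P(x,R)\setminus P(x,R/\sqrt2))}$ I would use the representation $\nabla^{g_0(t)}_xh(x,t)=\int_0^t\int_M\langle\nabla^{g_0(t)}_xK_L(x,t,y,s),R(y,s)\rangle\,d\mu_{g_0(s)}\,ds$ with the gradient kernel bound restricted to $P(x,R)\setminus P(x,R/\sqrt2)$, and repeat the dyadic space--time decomposition of the $L^{\infty}$ step---the extra $(t-s)^{-1/2}\sim t^{-1/2}$ there being precisely the weight $\sqrt t$---or, equivalently, apply Proposition~\ref{cov-der-bound} to $h$ with $t=R^2$, $r=(3/4)^{1/2}R$ as in Theorem~\ref{Est-Hom-lin-equ}, so that the cylinder sits in the required region and the constants remain uniform in space, time and radius.

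The main obstacle will be the convolution bookkeeping for the $\nabla R^1$ term: one has to check that the gradient kernel bound has exactly the right homogeneity so that, after the dyadic decompositions in both space and time, the resulting double series converges with a constant independent of $(x,t)$ and of the scaling radius---that is, that the definition of $Y^1$, with its extra $L^{n+4}$ annular term, is strong enough to kill both the Gaussian tails and the borderline power $(t-s)^{-(n+1)/2}$. A secondary difficulty, already present in the $L^{n+4}$ part of Theorem~\ref{Est-Hom-lin-equ}, is that the constants in Proposition~\ref{cov-der-bound} degenerate as the evaluation time approaches the top of the parabolic cylinder, so the gradient kernel bound must be derived on cylinders of size comparable to $\sqrt{t-s}$ with the base point kept off the final time slice.
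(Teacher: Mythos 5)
Your reduction to $h_0=0$, your splitting $R=R^0+\nabla R^1$, and your $L^{\infty}$ and $L^2$ steps match the paper's proof (the near/far decomposition of the space--time integral with H\"older on the near piece and a covering by $\sqrt{t}$-balls on the far piece for $L^{\infty}$; the weighted energy identity retaining the inhomogeneity for $L^2$). The genuine gap is in the $L^{n+4}$ estimate, where your plan of ``repeating the dyadic decomposition with the gradient kernel bound'' cannot close. For the $R^0$ term, the natural H\"older pairing of $\|\nabla_x K_L\|\lesssim (t-s)^{-(n+1)/2}e^{-d^2/D(t-s)}$ against the $L^{(n+4)/2}$ part of $Y_0$ requires $\nabla_xK_L\in L^{(n+4)/(n+2)}$ near the diagonal, and a direct computation gives $\int_0^t(t-s)^{n/2-(n+1)q/2}\,ds<\infty$ only for $q<\tfrac{n+2}{n+1}$, while $\tfrac{n+4}{n+2}>\tfrac{n+2}{n+1}$ for every $n\geq1$; so no pointwise bound on $\sqrt{t}\,\nabla h$ is available from $Y_0$ data, and the paper instead proves that $T_{K_L}$ is of type $(1,p)$ and $(p',\infty)$ for $p\in[1,\tfrac{n+2}{n+1})$ and obtains the $(\tfrac{n+4}{2},n+4)$ bound by Riesz--Thorin interpolation. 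Your fallback of ``applying Proposition~\ref{cov-der-bound} to $h$'' is also not available: that proposition is a maximum-principle estimate for solutions of the \emph{homogeneous} equation $\partial_th=\Delta h+\Rm\ast h$ and does not apply to $K_L\ast R$.

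The $\nabla R^1$ contribution is worse: the relevant kernel is $\nabla_x\nabla_yK_L\sim d(x,t,y,s)^{-(n+2)}$, which has exactly the critical homogeneity of the parabolic space of homogeneous dimension $n+2$. The double series you propose to sum does \emph{not} converge with absolute values for any choice of exponents; boundedness of $R^1\mapsto\nabla(K_L\ast\nabla R^1)$ on $L^{n+4}$ is a genuine Calder\'on--Zygmund statement that requires cancellation. This is precisely why the paper verifies the H\"ormander-type regularity conditions for the kernel $\mathcal{K}_L$ on the homogeneous space $(M_+,d)$, establishes $L^2(Q_0)$ boundedness by integration by parts (where the cancellation enters), and then invokes the singular-integral theorem of \cite{Koc-Pde-Non-Euc} to conclude $L^{n+4}$ boundedness. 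You correctly flagged this term as the main obstacle, but the resolution is not a more careful bookkeeping of a convergent series---it is a change of method.
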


\begin{proof}

By theorem \ref{Est-Hom-lin-equ}, it suffices to consider the case $h_0=0$. The proof is very similar to what is done in [Lemma $4.2$, \cite{Koc-Lam-Rou}]. According to the high number of estimates that need to be proved, we separate each into three lemmata.

\begin{lemma}[$L^{\infty}$ estimate]\label{L-infty-non-hom}
Let $R$ be in $Y$. Then any solution of the form $h:=K_L\ast R$ satisfies the following estimate : 
\begin{eqnarray*}
\sup_{t\geq 0}\| h\|_{L^{\infty}(M,g_0(t))}\leq c\| R\|_{Y},
\end{eqnarray*}
for some positive uniform constant $c$.
\end{lemma}

\begin{proof}[Proof of lemma \ref{L-infty-non-hom}]

\begin{itemize}
\renewcommand{\labelitemi}{$\bullet$}

\item Let us estimate the $R_0$ term.

We cut the integral into two pieces with respect to $Q(x,R):=P(x,R)\setminus P(x,R/\sqrt{2})$, for $(x,R)\in M\times \mathbb{R}_+^*$ :
\begin{eqnarray*}
\arrowvert K_L\ast R_0\arrowvert(x,t)&\leq& \int_{Q(x,\sqrt{t})}\arrowvert K_L(x,t,y,s)\arrowvert_{g_0}\arrowvert R_0\arrowvert(y,s)d\mu_{g_0(s)}ds\\
&&+ \int_{M\setminus Q(x,\sqrt{t})}\arrowvert K_L(x,t,y,s)\arrowvert_{g_0}\arrowvert R_0\arrowvert(y,s)d\mu_{g_0(s)}ds=:I_{Q(x,\sqrt{t})}+I_{Q^c(x,\sqrt{t})}.
\end{eqnarray*}
Now, on one hand, by theorem \ref{Gausian-est-scal-heat-ker} together with H\"older's inequality,
\begin{eqnarray*}
I_{Q(x,\sqrt{t})}&\leq& \|K_L(x,t,\cdot,\cdot)\|_{L^{\frac{n+4}{n+2}}(Q(x,\sqrt{t}))}\|R_0\|_{L^{\frac{n+4}{2}}(Q(x,\sqrt{t}))}\\
&\leq& C(n,g_0)t^{\frac{2}{n+4}}\|R_0\|_{L^{\frac{n+4}{2}}(Q(x,\sqrt{t}))}\\
&\leq& C(n,g_0)\nor tR_0\|_{L^{\frac{n+4}{2}}(Q(x,\sqrt{t}))}\leq C(n,g_0) \|R_0\|_{Y_0},
\end{eqnarray*}
where $C(n,g_0)$ is a positive constant depending on $n$ and $g_0(=g_0(0))$ only.

On the other hand, notice that $$Q^c(x,\sqrt{t})\subset ((0,t/2]\times M)\cup (t/2,t)\times M\setminus B_{g_0(t/2)}(x,\sqrt{t}).$$ Therefore, by covering $M\setminus B_{g_0(t)}(x,\sqrt{t})$ by balls of radius $\sqrt{t}$ as follows : $$ M\setminus B_{g_0(t)}(x,\sqrt{t})=\cup_{i\geq 1}\cup_{j=1}^{j(i)}B_{g_0(t)}(x_j,\sqrt{t}),$$ where $d_{g_0(t)}(x_j,x)=i\sqrt{t}$ when $j\in\{1,...,j(i)\}$. One has $j(i)\leq C(n,g_0)i^n$ by volume comparison. In particular, by using theorem \ref{Gausian-est-scal-heat-ker} (fixing the constant $D>4$ once and for all), if $x_0:=x$,
\begin{eqnarray*}
I_{Q^c(x,\sqrt{t})}&\leq&C(n,g_0) \sum_{i\geq 0}\sum_{j=0}^{j(i)}e^{-C(n,g_0)i^2}t^{-n/2}\int_{0}^t\int_{B_{g_0(t)}(x_j,\sqrt{t})}\arrowvert R_0\arrowvert(y,s)d\mu_{g_0(s)}ds\\
&\leq& C(n,g_0)\sup_{i\geq 0}\sup_{j\in[0,j(i)]}t\nor R_0\|_{L^1(P(x_j,\sqrt{t}))}\\
&\leq& C(n,g_0)\|R_0\|_{Y_0},
\end{eqnarray*}
by using lemma \ref{C^0-distance-est-Type-III} repeatedly. \\

\item Finally, the $R_{1}$ term can be handled in the same way after integrating by parts by using proposition \ref{cov-der-bound}.
\end{itemize}
\end{proof}

\begin{lemma}[$L^{2}$ estimate]\label{L-2-non-hom}
Let $R$ be in $Y$. Then any solution of the form $h:=K_L\ast R$ satisfies the following estimate : 
\begin{eqnarray*}
\sup_{(x,t)\in M\times \mathbb{R}_+^*}\sqrt{t}\nor  \nabla h\|_{L^2(P(x,\sqrt{t}))}\leq c\| R\|_{Y},
\end{eqnarray*}
for some positive uniform constant $c$.
\end{lemma}
\begin{proof}[Proof of lemma \ref{L-2-non-hom}]
The proof goes along the same lines of the proof of the corresponding estimate for the homogeneous equation : see the proof of theorem \ref{Est-Hom-lin-equ}.

Since $(h(t))_{t\geq 0}$ satisfies $\partial_th=L_th+R$, one has,
\begin{eqnarray*}
\partial_t\arrowvert h\arrowvert_{g_0(t)}^2\leq \Delta_{g_0(t)}\arrowvert h\arrowvert^2_{g_0(t)}-2\arrowvert\nabla^{g_0(t)}h\arrowvert_{g_0(t)}^2+c(n)\arrowvert\Rm(g_0(t))\arrowvert_{g_0(t)}\arrowvert h\arrowvert_{g_0(t)}^2+2< R,h>_{g_0(t)}.
\end{eqnarray*}
Multiplying this inequality by any smooth cut-off function $\psi : M\times\mathbb{R}_+\rightarrow \mathbb{R}_+$ gives, after integrating by parts (in space) and using Young's inequality : 
\begin{eqnarray*}
&&\partial_t\int_M\psi^2\arrowvert h\arrowvert^2_{g_0(t)}d\mu_{g_0(t)}+\int_M\psi^2\arrowvert \nabla^{g_0(t)}h\arrowvert_{g_0(t)}^2d\mu_{g_0(t)}\leq\\
&& c(n)\int_M\left(\arrowvert \nabla^{g_0(t)}\psi\arrowvert_{g_0(t)}^2+\partial_t\psi^2+\psi^2\arrowvert\Rm(g_0(t))\arrowvert_{g_0(t)}\right)\arrowvert h\arrowvert^2_{g_0(t)}d\mu_{g_0(t)}\\
&&+c(n)\int_M\psi^2\arrowvert R_1\arrowvert_{g_0(t)}^2+\arrowvert R_0\arrowvert_{g_0(t)}\psi^2\arrowvert h\arrowvert_{g_0(t)}d\mu_{g_0(t)}.
\end{eqnarray*}
Now, let $(x,t)\in M\times\mathbb{R}_+^*$ and consider the following cutoff function : $\psi_{x,t}(y,s):=\psi(d_{g_0(s)}(x,y)/\sqrt{t}),$ where $\psi:\mathbb{R}_+\rightarrow \mathbb{R}_+ $ is a smooth function such that $\psi_{|[0,1]}\equiv 1$, $\psi_{|[2,+\infty)}\equiv 0$ and $\sup_{\mathbb{R}_+}\arrowvert\psi'\arrowvert\leq c$. $\psi_{x,t}$ is a Lipschitz function satisfying :
\begin{eqnarray*}
&&\arrowvert\nabla^{g_0(s)}\psi_{x,t}\arrowvert_{g_0(s)}\leq \frac{c}{\sqrt{t}},\quad0\leq\partial_s\psi_{x,t}\leq \frac{c}{\sqrt{t}\sqrt{s}},
\end{eqnarray*}
almost everywhere, by lemma \ref{C^0-distance-est-Type-III}. Hence, by integrating in time together with lemma \ref{L-infty-non-hom} : 
\begin{eqnarray*}
&&\int_0^t\int_{B_{g_0(s)}(x,\sqrt{t})}\arrowvert\nabla^{g_0(s)}h\arrowvert^2_{g_0(s)}d\mu_{g_0(s)}ds\leq\\
&& c(n)\left(t^{\frac{n}{2}}+\int_0^{t}\int_{B_{g_0(s)}(x,2\sqrt{t})}\arrowvert\Rm(g_0(s))\arrowvert_{g_0(s)}d\mu_{g_0(s)}ds\right)\| h\|^2_{L^{\infty}(P(x,2\sqrt{t}))}\\
&&+c(n)t^{\frac{n}{2}}\| R\|^2_Y\leq c(n,g_0)t^{\frac{n}{2}}\| R\|^2_Y,
\end{eqnarray*}
since the curvature of $g_0$ has quadratic curvature decay : see the proof of theorem \ref{Est-Hom-lin-equ}.

\end{proof}

\begin{lemma}[$L^{n+4}$ estimate]\label{L-(n+4)-non-hom}
Let $R$ be in $Y$. Then any solution of the form $h:=K_L\ast R$ satisfies the following estimate : 
\begin{eqnarray*}
\sup_{(x,t)\in M\times \mathbb{R}_+^*}\sqrt{t}\nor  \nabla h\|_{L^{n+4}(P(x,\sqrt{t})\setminus P(x,\sqrt{t}/2))}\leq c\| R\|_{Y},
\end{eqnarray*}
for some positive uniform constant $c$.
\end{lemma}
\begin{proof}[Proof of lemma \ref{L-(n+4)-non-hom}]
As noticed in \cite{Koc-Lam-Rou}, the method used in the proof of lemma \ref{L-infty-non-hom} to estimate $I_{Q^c(x_0,\sqrt{t_0})}$ for $(x_0,t_0)\in M\times\mathbb{R}_+^*$ implies that 
\begin{eqnarray*}
&&\sqrt{t_0}\sup_{(x,t)\in Q(x_0,\sqrt{t_0})}\int_{Q^c(x_0,\sqrt{t_0})}\left\arrowvert<\nabla^{g_0(t)}_xK_L(x,t,y,s),R_0(y,s)>\right \arrowvert d\mu_{g_0(s)}(y)ds\leq c\|R\|_{Y},\\
&&\sqrt{t_0}\sup_{(x,t)\in Q(x_0,\sqrt{t_0})}\int_{Q^c(x_0,\sqrt{t_0})}\left\arrowvert<\nabla^{g_0(s)}_y\nabla^{g_0(t)}_xK_L(x,t,y,s),R_1(y,s)>\right\arrowvert d\mu_{g_0(s)}(y)ds\leq c\|R\|_{Y}.
\end{eqnarray*}
That is why we assume from now on that the support of $R_0$ and $R_1$ is contained in a parabolic neighborhood of the form $Q_0:=Q(x_0,\sqrt{t_0})$.
\begin{itemize}
\renewcommand{\labelitemi}{$\bullet$}

\item 

[$R_0$ estimate]

Consider the following map : 
\begin{eqnarray*}
T_{K_L}(R_0)(x,t):=\int_{M\times \mathbb{R}_+}<\nabla^{g_0(t)}_xK_L(x,t,y,s)1_{s<t},R_0(y,s)>d\mu_{g_0(s)}ds.
\end{eqnarray*}
 In order to apply the Riesz convexity theorem, we show first that $T_{K_L}$ is of type $(1,p)$ and of type $(p',\infty)$ where $p'$ is the conjugate exponent of $p$, as soon as $p\in\left[1,\frac{n+2}{n+1}\right)$.

 $T_{K_L}$ is of type $(p',\infty)$ : indeed, by Hölder's inequality, one has
 \begin{eqnarray*}
\arrowvert T_{K_L}(R_0)\arrowvert(x,t)\leq \|\nabla^{g_0(t)}_xK_L(x,t,\cdot,\cdot)\|_{L^p(M\times (0,t))}\|R_0\|_{L^{p'}(Q_0)},
\end{eqnarray*}
for $(x,t)\in M\times \mathbb{R}_+^*.$
Now, by the Gaussian estimates established in theorem \ref{Gausian-est-scal-heat-ker} together with proposition \ref{cov-der-bound}, one gets by the co-area formula :
\begin{eqnarray*}
\|\nabla^{g_0(t)}_xK_L(x,t,\cdot,\cdot)1_{s<t}\|^p_{L^p(M\times(0,t))}&\leq& c\int_0^t\frac{(t-s)^{n/2}}{(t-s)^{\frac{(n+1)p}{2}}}ds\\
&\leq& ct^{\left(\frac{n}{2}+1\right)\left(1-p\right)+\frac{p}{2}},
\end{eqnarray*}
if $p\in\left[1,\frac{n+2}{n+1}\right)$ where $c$ is independent of time and space. Therefore, 
\begin{eqnarray*}
\|\sqrt{t}T_{K_L}(R_0)\|_{L^{\infty}(Q_0)}\leq c\nor tR_0\|_{L^{p'}(Q_0)}.\\
\end{eqnarray*}

$T_{K_L}$ is of type $(1,p)$ : indeed, by Minkowski's integral inequality, one has, if $p\in\left[1,\frac{n+2}{n+1}\right)$,
\begin{eqnarray*}
&&\|T_{K_L}(R_0)\|_{L^p(M\times(0,t_0))}\\
&\leq&\int_{M\times(0,t_0)}\left(\int_{M\times(0,t_0)}\|\nabla^{g_0(t)}_xK_L(x,t,y,s)1_{s<t}\|_g^p d\mu_t(x)dt\right)^{\frac{1}{p}}\arrowvert R_0\arrowvert_{g_0(s)}(y,s)d\mu_{g_0(s)}(y)ds\\
&\leq& ct_0^{\left(\frac{n}{2}+1\right)\left(\frac{1}{p}-1\right)+\frac{1}{2}}\|R_0\|_{L^1(Q_0)}, 
\end{eqnarray*}
which reads : 
\begin{eqnarray*}
\nor \sqrt{t_0}T_{K_L}(R_0)\|_{L^p(Q_0)}\leq c \nor t_0R_0\|_{L^1(Q_0)}.
\end{eqnarray*}
Therefore, by interpolation, $T_{K_L}$ is of type $(p_t,q_t)$ where $t\in[0,1]$ and $p_t, q_t$ are defined by 
\begin{eqnarray*}
\frac{1}{p_t}:=\frac{1-t}{1}+\frac{t}{p'},\quad \frac{1}{q_t}:=\frac{1-t}{p}+\frac{t}{\infty}.
\end{eqnarray*}
In particular, with $(p_t,q_t):=((n+4)/2, n+4)$ and $p:=\frac{n+4}{n+3}\in\left[1,\frac{n+2}{n+1}\right)$,
\begin{eqnarray*}
\nor \sqrt{t_0}T_K(R_0)\|_{L^{n+4}(Q_0)}\leq c \nor t_0R_0\|_{L^{\frac{n+4}{2}}(Q_0)}
\end{eqnarray*}

\item 

[$R_1$-estimate]
The proof is based on singular integrals defined on homogeneous spaces as in \cite{Koc-Lam-Rou}. Indeed, by theorem $2.4$ and section $3.2$ of \cite{Koc-Pde-Non-Euc}, the result follows if the following parabolic Riesz operator satisfies suitable estimates that are recalled below. 

Consider $M_+:=M\times \mathbb{R}_+$ endowed with the parabolic distance :
\begin{eqnarray*}
d(x,t,y,s):=\max\{d_{g_0(s)}(x,y),\sqrt{t-s}\},\quad x,y\in M,\quad 0\leq s\leq t.
\end{eqnarray*}
This turns $(M_+,d)$ into a homogeneous space, i.e. a complete metric space with a doubling measure.

Consider the operator $T_{K_L}(R_1):=\nabla^{g_0(\cdot)}(K_L\ast \nabla^{g_0(\cdot)}R_1)$, for $R_1$ with compact support in $Q_0$ as defined above.
This operator is an integral operator whose kernel $\mathcal{K}_L$ is given by the second covariant derivatives of $K_L$. By theorem \ref{Gausian-est-scal-heat-ker} together with proposition \ref{cov-der-bound}, $\mathcal{K}_L$ satisfies on $M_+$, 
\begin{eqnarray*}
&&\|\mathcal{K}_L(x,t,y,s)\|_g\leq cd(x,t,y,s)^{-(n+2)},\\
&&\min\{d(x,t,y,s)^{(n+2)},d(x',t',y',s')^{(n+2)}\|\mathcal{K}_L(x,t,y,s)-\mathcal{K}_L(x',t',y',s')\|_g\leq \\
&&c\frac{d(x,t,x',t')+d(y,s,y',s')}{d(x,t,y,s)+d(x',t',y',s')},
\end{eqnarray*}
for any $(x,t)$, $(y,s)$, $(x',t')$ and $(y',s')$ in $M_+$.
 
Moreover, by an integration by part on $M_+$, one can check that $T_{K_L}$ is continuous operator on $L^2(Q_0)$, i.e.
\begin{eqnarray*}
\|T_{K_L}(R_1)\|_{L^2(Q_0)}\leq c\|R_1\|_{L^2(Q_0)},
\end{eqnarray*}
where $c$ is a positive constant independent of $Q_0$  and $R_1$.
Therefore, by invoking theorem $2.4$ of \cite{Koc-Pde-Non-Euc}, $T_{K_L}$ is a continuous operator on $L^{n+4}(Q_0)$, which corresponds to the expected estimate after rescaling appropriately by a power of $t_0$.

\end{itemize}

\end{proof}
\end{proof}
To prove theorem \ref{main-theo}, it is now sufficient to invoke lemma \ref{lemma-contraction-map} together with theorem \ref{sec-est-inhom-equ} to show that the map $\Gamma$,
\begin{eqnarray*}
 h\in B_X(0_X,r)\stackrel{\Gamma}{\longmapsto} K_L\ast h(0)+K_L\ast R[h]\in B_X(0_X,r),
\end{eqnarray*}
is well-defined and is a contraction mapping for $r<1$ sufficiently small. Therefore, by the contraction mapping theorem, one gets a locally unique solution to the DeTurck Ricci flow with initial condition $g(0):=g_0+h(0)$.

\appendix

\section{Soliton equations}\label{sol-equ-sec}

The next lemma gathers well-known Ricci soliton identities together with the (static) evolution equations satisfied by the curvature tensor.

Recall first that an expanding gradient Ricci soliton is said \textit{normalized} if $\int_Me^{-f}d\mu_g=(4\pi)^{n/2}$ (whenever it makes sense).

\begin{lemma}\label{id-EGS}
Let $(M^n,g,\nabla^g f)$ be a normalized expanding gradient Ricci soliton. Then the trace and first order soliton identities are :
\begin{eqnarray}
&&\Delta_g f = \R_g+\frac{n}{2}, \label{equ:1} \\
&&\nabla^g \R_g+ 2\Ric(g)(\nabla^g f)=0, \label{equ:2} \\
&&\arrowvert \nabla^g f \arrowvert^2+\R_g=f+\mu(g), \label{equ:3}\\
&&\div_g\Rm(g)(Y,Z,T)=\Rm(g)(Y,Z, \nabla f,T),\label{equ:4}
\end{eqnarray}
for any vector fields $Y$, $Z$, $T$ and where $\mu(g)$ is a constant called the entropy.\\

The evolution equations for the curvature operator, the Ricci tensor and the scalar curvature are :
\begin{eqnarray}
&& \Delta_f \Rm(g)+\Rm(g)+\Rm(g)\ast\Rm(g)=0,\label{equ:5}\\
&&\Delta_f\Ric(g)+\Ric(g)+2\Rm(g)\ast\Ric(g)=0,\label{equ:6}\\
&&\Delta_f\R_g+\R_g+2\arrowvert\Ric(g)\arrowvert^2=0,\label{equ:7}
\end{eqnarray}
where, if $A$ and $B$ are two tensors, $A\ast B$ denotes any linear combination of contractions of the tensorial product of $A$ and $B$.
\end{lemma}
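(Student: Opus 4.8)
The plan is to derive all seven identities from the single defining equation of the soliton by differentiation and standard tensor commutation formulas. Rewriting the defining relation $\tfrac12\Li_{\nabla^g f}(g)=\Ric(g)+\tfrac12 g$ as $\Hess_g f=\Ric(g)+\tfrac12 g$, I would first take the $g$-trace of both sides, which gives $\Delta_g f=\R_g+\tfrac n2$, that is \eqref{equ:1}. For \eqref{equ:2} I would take the $g$-divergence of $\Hess_g f=\Ric(g)+\tfrac12 g$: the right-hand side contributes $\div_g\Ric(g)=\tfrac12 d\R_g$ by the twice-contracted second Bianchi identity (and $\div_g g=0$), while the left-hand side, after applying the Ricci identity to the $1$-form $df$, contributes $d(\Delta_g f)+\Ric(g)(\nabla^g f,\cdot)$; substituting \eqref{equ:1} and rearranging produces $\nabla^g\R_g+2\Ric(g)(\nabla^g f)=0$.

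Next, for \eqref{equ:3} I would show that the function $\Phi:=\arrowvert\nabla^g f\arrowvert_g^2+\R_g-f$ has vanishing gradient: differentiating $\arrowvert\nabla^g f\arrowvert^2$ and using the soliton equation gives $\nabla^g\arrowvert\nabla^g f\arrowvert^2=2\Hess_g f(\nabla^g f,\cdot)=2\Ric(g)(\nabla^g f,\cdot)+\nabla^g f$, and adding $\nabla^g\R_g=-2\Ric(g)(\nabla^g f)$ from \eqref{equ:2} yields $\nabla^g\Phi=0$; since $M$ is connected, $\Phi$ is a constant, which we name $\mu(g)$ (its value is then fixed by the normalization $\int_M e^{-f}d\mu_g=(4\pi)^{n/2}$, though this is not needed for the identity). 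For \eqref{equ:4} I would substitute $\Ric(g)=\Hess_g f-\tfrac12 g$ into the once-contracted second Bianchi identity $\div_g\Rm(g)(Y,Z,T)=\nabla_Y\Ric(g)(Z,T)-\nabla_Z\Ric(g)(Y,T)$; the metric terms cancel, leaving a difference of third covariant derivatives of $f$, which after commuting derivatives via the Ricci identity and applying the first Bianchi identity collapses to $\Rm(g)(Y,Z,\nabla^g f,T)$.

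Finally, \eqref{equ:5}--\eqref{equ:7} are the static elliptic counterparts of Hamilton's evolution equations for the curvature, and I would obtain them in either of two equivalent ways. The quickest is to use that the expander generates the self-similar Ricci flow $g(\tau)=(1+\tau)\phi_\tau^*g$ with $\partial_\tau\phi_\tau=-\nabla^g f/(1+\tau)$, insert this into Hamilton's equations $\partial_\tau\Rm=\Delta\Rm+\Rm\ast\Rm$, $\partial_\tau\Ric=\Delta\Ric+2\Rm\ast\Ric$, $\partial_\tau\R=\Delta\R+2\arrowvert\Ric\arrowvert^2$, and evaluate at $\tau=0$: the left-hand $\partial_\tau$ becomes a Lie derivative along $-\nabla^g f$ together with a scaling term whose coefficient is fixed by the homogeneity of the tensor, reproducing precisely the displayed zeroth-order terms. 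Alternatively one argues directly; for instance \eqref{equ:7} follows by taking $\div_g$ of \eqref{equ:2} and using $\div_g\Ric(g)=\tfrac12 d\R_g$ together with $\Hess_g f=\Ric(g)+\tfrac12 g$, giving $\Delta_g\R_g+\nabla^g_{\nabla^g f}\R_g+\R_g+2\arrowvert\Ric(g)\arrowvert^2=0$, i.e. $\Delta_f\R_g+\R_g+2\arrowvert\Ric(g)\arrowvert^2=0$, and \eqref{equ:6}, \eqref{equ:5} are proved the same way by differentiating the soliton equation (respectively \eqref{equ:4}) once more and commuting covariant derivatives, the curvature terms regrouping into $\Rm\ast\Ric$ (respectively $\Rm\ast\Rm$). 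I expect the only genuinely delicate point to be the bookkeeping: tracking sign conventions in the Ricci and second Bianchi identities and the scaling weights of the various curvature quantities, which is exactly what pins down the precise zeroth-order coefficients; the manipulations themselves are classical.
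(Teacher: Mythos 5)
Your derivation is correct and is exactly the classical argument: the paper itself gives no proof of Lemma \ref{id-EGS} and simply refers to [Chap.~1, Chow--Lu--Ni], where the identities are obtained precisely as you do — tracing and taking divergences of $\Hess_g f=\Ric(g)+\tfrac12 g$, using the contracted Bianchi identities and the Ricci commutation formula, and reading off \eqref{equ:5}--\eqref{equ:7} from Hamilton's evolution equations along the self-similar flow (or by one further differentiation). No discrepancy to report beyond the sign/convention bookkeeping you already flag.
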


\begin{proof}
See [Chap.$1$,\cite{Cho-Lu-Ni-I}] for instance.

\end{proof}

\begin{prop}\label{pot-fct-est}
Let $(M^n,g,\nabla^g f)$ be an expanding gradient Ricci soliton.
\begin{itemize}
\item If $(M^n,g,\nabla^g f)$ is non Einstein, if $v:=f+\mu(g)+n/2$,
\begin{eqnarray}
&&\Delta_fv=v,\quad v>\arrowvert\nabla v\arrowvert^2.\label{inequ:1}\\
\end{eqnarray}

\item Assume $\Ric(g)\geq 0$ and assume $(M^n,g,\nabla^g f)$ is normalized. Then $M^n$ is diffeomorphic to $\mathbb{R}^n$ and
\begin{eqnarray}
&&v\geq \frac{n}{2}>0.\label{inequ:2}\\
&&\frac{1}{4}r_p(x)^2+\min_{M}v\leq v(x)\leq\left(\frac{1}{2}r_p(x)+\sqrt{\min_{M}v}\right)^2,\quad \forall x\in M,\label{inequ:3}\\
&&\AVR(g):=\lim_{r\rightarrow+\infty}\frac{\vol B(q,r)}{r^n}>0,\quad\forall q\in M,\label{inequ:avr}\\
&&-C(n,V_0,R_0)\leq\min_{M}f\leq 0\quad;\quad\mu(g)\geq\max_{M}\R_g\geq 0,\label{inequ:ent}
\end{eqnarray}
where $V_0$ is a positive number such that $\AVR(g)\geq V_0$, $R_0$ is such that $\sup_{M}\R_g\leq R_0$ and $p\in M$ is the unique critical point of $v$.\\

\item Assume $\Ric(g)=\textit{O}(r_p^{-2})$ where $r_p$ denotes the distance function to a fixed point $p\in M$. Then the potential function is equivalent to $r_p^2/4$ (up to order $2$).
\end{itemize}
\end{prop}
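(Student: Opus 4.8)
The plan is to read every assertion off the soliton identities of Lemma~\ref{id-EGS} and the curvature evolution equations (\ref{equ:5})--(\ref{equ:7}), supplemented by elementary Riemannian comparison; throughout $\Delta_f=\Delta_g+\nabla^g_{\nabla^g f}$ denotes the drift Laplacian occurring there. For the first bullet, set $v=f+\mu(g)+\tfrac n2$, so that $\nabla^g v=\nabla^g f$ and $\Delta_g v=\Delta_g f$. Using (\ref{equ:1}) and then (\ref{equ:3}) I would simply compute $\Delta_f v=\Delta_g f+\arrowvert\nabla^g f\arrowvert^2=(\R_g+\tfrac n2)+\arrowvert\nabla^g f\arrowvert^2=(\arrowvert\nabla^g f\arrowvert^2+\R_g)+\tfrac n2=(f+\mu(g))+\tfrac n2=v$, the first half of (\ref{inequ:1}). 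For the differential inequality, (\ref{equ:3}) also gives $\arrowvert\nabla^g v\arrowvert^2=\arrowvert\nabla^g f\arrowvert^2=f+\mu(g)-\R_g=v-\tfrac n2-\R_g$, so $v-\arrowvert\nabla v\arrowvert^2=\R_g+\tfrac n2$ and the claim $v>\arrowvert\nabla v\arrowvert^2$ is exactly the sharp bound $\R_g>-n/2$. For a non--Einstein complete expander this is classical; one short route uses (\ref{equ:7}): with $w=\R_g+\tfrac n2$ one gets $\Delta_f w\le -w+\tfrac n2-\tfrac2n\big(w-\tfrac n2\big)^2$, and a minimum--principle argument — using that $\R_g$ is bounded below on a complete soliton — excludes $\inf_M w\le0$ unless $\Ric(g)\equiv-g/2$.

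For the second bullet, first $\Ric(g)\ge0$ forces $\R_g\ge0$, so (\ref{equ:3}) gives $f+\mu(g)=\arrowvert\nabla^g f\arrowvert^2+\R_g\ge0$, hence $v\ge n/2>0$, which is (\ref{inequ:2}). Next, the soliton equation reads $\nabla^{g,2}v=\nabla^{g,2}f=\Ric(g)+\tfrac12 g\ge\tfrac12 g>0$, so $v$ is a smooth, strictly convex, bounded--below function on the complete manifold $M$; it is therefore a proper exhaustion with a single critical point $p$ (its minimum), and the standard exhaustion/Morse argument then identifies $M$ with $\mathbb{R}^n$. To get (\ref{inequ:3}) I would integrate along a unit--speed minimizing geodesic $\gamma$ from $p$ to $x$: from $\nabla^{g,2}v\ge\tfrac12 g$ and $\nabla v(p)=0$ one has $(v\circ\gamma)''\ge\tfrac12$ and $(v\circ\gamma)'(0)=0$, hence $v(x)\ge\min_M v+\tfrac14 r_p(x)^2$; and from $\arrowvert\nabla v\arrowvert^2=v-\tfrac n2-\R_g\le v$ one gets $\big(\sqrt{v\circ\gamma}\big)'\le\tfrac12$, hence $\sqrt{v(x)}\le\sqrt{\min_M v}+\tfrac12 r_p(x)$. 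For (\ref{inequ:avr}) I would combine $\Delta_g v=\R_g+\tfrac n2\ge\tfrac n2$ (a restatement of $\Delta_f v=v$) with $\arrowvert\nabla v\arrowvert\le\sqrt v\le\tfrac r2+\sqrt{\min_M v}$ on $\partial B_g(p,r)$: the divergence theorem gives $\tfrac n2\vol B(p,r)\le\int_{\partial B(p,r)}\langle\nabla^g v,\nu\rangle\le\big(\tfrac r2+\sqrt{\min_M v}\big)\tfrac{d}{dr}\vol B(p,r)$, so $\log\vol B(p,r)-n\log(r+2\sqrt{\min_M v})$ is non--decreasing, whence $\vol B(p,r)\gtrsim r^n$ and, the limit existing by Bishop--Gromov, $\AVR(g)>0$.

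It remains to prove the entropy/potential bounds (\ref{inequ:ent}) and the last bullet. Evaluating (\ref{equ:3}) at $p$ gives $\R_g(p)=\min_M f+\mu(g)$, i.e. $\min_M v=\R_g(p)+\tfrac n2\in[\,\tfrac n2,\,R_0+\tfrac n2\,]$; moreover along the $-\nabla^g f$ flow — which by the previous paragraph carries every point to $p$ — one has $\tfrac{d}{ds}\R_g=\langle\nabla\R_g,-\nabla f\rangle=2\Ric(\nabla f,\nabla f)\ge0$ by (\ref{equ:2}), so $\R_g$ is non--decreasing towards $p$ and $\max_M\R_g=\R_g(p)$. The lower quadratic bound $f\ge\tfrac14 r_p^2+\min_M f$ together with the normalization and the sharp inequality $\int_M e^{-r_p^2/4}\,d\mu_g\le(4\pi)^{n/2}$ (where $\vol B(p,r)\le$ Euclidean value is used) forces $e^{-\min_M f}\ge1$, i.e. $\min_M f\le0$; hence $\mu(g)=\R_g(p)-\min_M f\ge\R_g(p)=\max_M\R_g\ge0$. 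For the quantitative lower bound I would feed the upper bound $v\le\big(\tfrac12 r_p+\sqrt{R_0+n/2}\big)^2$ and $\vol B(p,r)\ge V_0 r^n$ into $\int_M e^{-f}d\mu_g=(4\pi)^{n/2}$ to get $\mu(g)\le C(n,V_0,R_0)$, whence $\min_M f\ge-C(n,V_0,R_0)$. Finally, under $\Ric(g)=O(r_p^{-2})$ the Hessian comparison $\nabla^{g,2}f=\Ric(g)+\tfrac12 g$ shows $f$ has the same second--order asymptotics as the radial potential $r_p^2/4$ of the asymptotic cone, whose $C^{1,\alpha}$ structure is provided by \cite{Che-Der}; comparing the two as in \cite{Der-Asy-Com-Egs} gives $f=\tfrac14 r_p^2+O(r_p)$ with matching control on $\nabla^g f$ and $\nabla^{g,2}f$, i.e. equivalence up to order two.

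The main obstacle is the quantitative part of (\ref{inequ:ent}): turning the single scalar normalization $\int_M e^{-f}d\mu_g=(4\pi)^{n/2}$ into two--sided bounds on $\mu(g)$ and $\min_M f$ with constants depending only on $n$, $V_0$ and $R_0$ requires coupling the coarea formula with Bishop--Gromov in both directions, and is the only place where the non--collapsing is genuinely used. A secondary delicate point is making ``equivalent up to order two'' in the last bullet precise, which rests on the cone comparison of \cite{Che-Der} and \cite{Der-Asy-Com-Egs} rather than on the soliton identities alone.
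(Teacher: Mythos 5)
Your argument is correct, but it is worth noting that the paper does not prove this proposition at all: its ``proof'' is the single line ``For a proof, see \cite{Der-Asy-Com-Egs} and the references therein.'' What you have done is reconstruct, from the identities of Lemma \ref{id-EGS} alone, essentially the standard chain of arguments that lives in that reference and its sources (Hamilton's identity at the critical point, the Carrillo--Ni volume-ratio monotonicity $\frac{d}{dr}\log\vol B(p,r)\geq n/(r+2\sqrt{\min_M v})$ for \eqref{inequ:avr}, and the Gaussian-comparison trick $\int_M e^{-r_p^2/4}d\mu_g\leq(4\pi)^{n/2}$ to extract $\min_M f\leq 0$ and $\mu(g)\geq\max_M\R_g\geq 0$ from the normalization). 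Your reductions are all sound: \eqref{inequ:1} is exactly $\eqref{equ:1}+\eqref{equ:3}$ plus the sharp bound $\R_g>-n/2$; \eqref{inequ:2} and \eqref{inequ:3} follow from $\nabla^{g,2}v=\Ric(g)+\tfrac12g\geq\tfrac12 g$ and $\arrowvert\nabla v\arrowvert^2\leq v$ integrated along minimizing geodesics from the unique critical point; and the identification $\max_M\R_g=\R_g(p)$, $\mu(g)=\R_g(p)-\min_Mf$ via the monotonicity of $\R_g$ along the $-\nabla^g f$ flow is the right way to close \eqref{inequ:ent}. Two places remain thinner than the rest and should be flagged as imported facts rather than proved: (i) the inequality $\R_g\geq -n/2$ with rigidity in the non-Einstein case requires an Omori--Yau/cut-off argument when $\inf_M(\R_g+\tfrac n2)$ is not attained (Pigola--Rimoldi--Setti, Zhang); your minimum-principle sketch via \eqref{equ:7} only covers the attained case, though the localization is routine. (ii) The last bullet is, like the paper's, a deferral to \cite{Der-Asy-Com-Egs}; the direct route is simply to integrate $\nabla^{g,2}f=\Ric(g)+\tfrac12 g=\tfrac12 g+O(r_p^{-2})$ twice along radial geodesics, which gives $f=\tfrac14 r_p^2+O(r_p)$ with matching first and second derivatives and does not actually need the $C^{1,\alpha}$ cone structure of \cite{Che-Der}.
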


For a proof, see \cite{Der-Asy-Com-Egs} and the references therein.

\begin{lemma}[Distance distortions]\label{C^0-distance-est-Type-III}
Let $(M^n,g_0(t))_{t\geq 0}$ be a Type III solution of the Ricci flow with nonnegative Ricci curvature, i.e.
\begin{eqnarray*}
\arrowvert\Rm(g_0(t))\arrowvert_{g_0(t)}\leq \frac{R_0}{1+t},\quad\Ric(g_0(t))\geq 0,\quad t\geq 0.
\end{eqnarray*}
Then, 
\begin{eqnarray*}
\left(\frac{1+s}{1+t}\right)^{c(R_0)}g_0(s)&\leq& g_0(t)\leq g_0(s),\\
d_{g_0(s)}(x,y)-c(R_0)\left(\sqrt{t}-\sqrt{s}\right)\leq d_{g_0(t)}(x,y)&\leq& d_{g_0(s)}(x,y),\quad s\leq t,\quad x,y\in M,
\end{eqnarray*}
for a positive constant $c(R_0)$, and any $0\leq s\leq t$.

In particular, 
\begin{eqnarray*}
-\frac{c(R_0)}{\sqrt{t}}\leq\partial_td_{g_0(t)}(x,y)\leq 0, \quad t>0,\quad x,y \in M,
\end{eqnarray*}
for a positive constant $c(R_0)$.

\end{lemma}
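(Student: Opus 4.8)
The plan is to deduce everything from the evolution equation $\partial_t g_0(t)=-2\Ric(g_0(t))$ of a Ricci flow: the two \emph{upper} bounds come from $\Ric(g_0(t))\ge 0$, while the two \emph{lower} bounds come from the Type~III decay through a classical integrated-Ricci estimate along minimizing geodesics. First I would prove the metric comparison. Fix $x\in M$ and $v\in T_xM$ and set $\phi(t):=g_0(t)(v,v)$. Since $\Ric(g_0(t))$ is a contraction of $\Rm(g_0(t))$, one has, as bilinear forms, $0\le\Ric(g_0(t))\le c(n)|\Rm(g_0(t))|_{g_0(t)}g_0(t)\le\tfrac{c(n)R_0}{1+t}g_0(t)$, hence
\[
-\frac{2c(n)R_0}{1+t}\,\phi(t)\ \le\ \phi'(t)=-2\Ric(g_0(t))(v,v)\ \le\ 0 .
\]
Integrating the resulting differential inequality for $\log\phi$ between $s$ and $t$ gives $\big(\tfrac{1+s}{1+t}\big)^{2c(n)R_0}\phi(s)\le\phi(t)\le\phi(s)$, i.e. the claimed two-sided bound on $g_0(t)$ with $c(R_0):=2c(n)R_0$.

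Next, the upper distance bound is immediate: since $g_0(t)\le g_0(s)$, every curve $\gamma$ joining $x$ and $y$ satisfies $\ell_{g_0(t)}(\gamma)\le\ell_{g_0(s)}(\gamma)$, and minimizing over $\gamma$ for the metric $g_0(s)$ yields $d_{g_0(t)}(x,y)\le d_{g_0(s)}(x,y)$ (hence also $\partial_t d_{g_0(t)}(x,y)\le 0$). For the lower bound, fix $x\ne y$; the map $\tau\mapsto d_{g_0(\tau)}(x,y)$ is monotone, hence locally Lipschitz and differentiable a.e., and at a differentiability time $t$, with $\gamma:[0,L]\to M$ a unit-speed minimizing $g_0(t)$-geodesic ($L=d_{g_0(t)}(x,y)$), the first variation of arclength gives $\partial_t d_{g_0(t)}(x,y)=-\int_0^L\Ric(g_0(t))(\gamma',\gamma')\,d\sigma$. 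I would then bound this integral by the standard index-form argument: testing the second variation of length (nonnegative by minimality) against the vector fields $f\,e_i$, where $e_i$ are parallel unit normals along $\gamma$ and $f$ is a cutoff equal to $1$ on $[r_0,L-r_0]$ and linear on the two end intervals of length $r_0$, yields
\[
\int_0^L\Ric(g_0(t))(\gamma',\gamma')\,d\sigma\ \le\ c(n)\Big(\frac{1}{r_0}+\frac{R_0}{1+t}\,r_0\Big)\qquad\text{if } L\ge 2r_0,
\]
(Hamilton's lemma, see \cite{Cho-Lu-Ni-I}), while for $L<2r_0$ one simply uses the trivial bound $\int_0^L\Ric(g_0(t))(\gamma',\gamma')\le\tfrac{c(n)R_0}{1+t}L$. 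Taking $r_0:=\sqrt{1+t}$ gives in both cases $\partial_t d_{g_0(t)}(x,y)\ge-\tfrac{c(R_0)}{\sqrt{1+t}}\ge-\tfrac{c(R_0)}{\sqrt t}$, which is the ``in particular'' assertion; integrating from $s$ to $t$ and using that $u\mapsto\sqrt{1+u}-\sqrt u$ is decreasing (so $\sqrt{1+t}-\sqrt{1+s}\le\sqrt t-\sqrt s$) yields $d_{g_0(t)}(x,y)\ge d_{g_0(s)}(x,y)-c(R_0)(\sqrt t-\sqrt s)$.

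The main obstacle is this last step: running Hamilton's integrated-Ricci bound with the \emph{time-dependent} curvature scale $r_0=\sqrt{1+t}$, and dealing with the non-smoothness of $\tau\mapsto d_{g_0(\tau)}(x,y)$ at cut-locus times. For the latter I would either invoke Calabi's upper-barrier trick — replacing $d_{g_0(t)}(x,\cdot)$ near a bad point by a smooth function lying above it and agreeing there, which turns the first-variation identity into an inequality in the favorable direction — or simply observe that local Lipschitzness in $\tau$ together with the a.e.\ differential inequality already legitimizes the integration. Note that $\Ric(g_0(t))\ge 0$ is used only for the two upper bounds, and the Type~III hypothesis only for the two lower bounds.
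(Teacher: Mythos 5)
Your proof is correct and follows the same route the paper takes: the paper merely cites the monotonicity of lengths under $\Ric\geq 0$ for the upper bounds and Hamilton's lemma (Lemma 8.33 of \cite{Ben}) for the lower bounds, while you spell out that same argument — integrating $\phi'/\phi\geq -2c(n)R_0/(1+t)$ for the metric comparison, and Hamilton's first-variation plus index-form estimate with the time-dependent cutoff scale $r_0=\sqrt{1+t}$ to get $\partial_\tau d\geq -c(R_0)/\sqrt{1+\tau}$, then integrating. All steps check out, including the elementary inequality $\sqrt{1+t}-\sqrt{1+s}\leq\sqrt t-\sqrt s$.
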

\begin{proof}
The upper bound comes from the fact that the Ricci curvature is nonnegative, hence, $g_0(t)\leq g_0(s)$ for any $s\leq t$ in the sense of symmetric $2$-tensors. 

The lower bound has been proved by Hamilton and a proof can be found for instance in [lemma $8.33$, \cite{Ben}].
\end{proof}

\bibliographystyle{alpha.bst}
\bibliography{bib-wea-sta-egs}

\end{document}